\setlist[enumerate,1]{wide, labelindent=0pt,label={\upshape(\roman*)}}
    \def\XXint#1#2#3{{\setbox0=\hbox{$#1{#2#3}{\int}$}
    \vcenter{\hbox{$#2#3$}}\kern-.5\wd0}}
\def\vint{\mathop{\mathchoice%
          {\setbox0\hbox{$\displaystyle\intop$}\kern 0.22\wd0%
           \vcenter{\hrule width 0.6\wd0}\kern -0.82\wd0}%
          {\setbox0\hbox{$\textstyle\intop$}\kern 0.2\wd0%
           \vcenter{\hrule width 0.6\wd0}\kern -0.8\wd0}%
          {\setbox0\hbox{$\scriptstyle\intop$}\kern 0.2\wd0%
           \vcenter{\hrule width 0.6\wd0}\kern -0.8\wd0}%
          {\setbox0\hbox{$\scriptscriptstyle\intop$}\kern 0.2\wd0%
           \vcenter{\hrule width 0.6\wd0}\kern -0.8\wd0}}%
          \mathopen{}\int}
\newcommand{\vast}{\bBigg@{4}}
\newcommand{\Vast}{\bBigg@{5}}
\newcommand{\arctanh}{\mathrm{arctanh}}
\newcommand{\arcosh}{\mathrm{arcosh}}
\newcommand{\R}{\mathbb{R}}
\newcommand{\bS}{\mathbb{S}}
\theoremstyle{plain}
\newtheorem{thm}{Theorem}[section]
\newtheorem{theorem}[thm]{Theorem}
\newtheorem{lemma}[thm]{Lemma}
\newtheorem{definition}[thm]{Definition}
\newtheorem{corollary}[thm]{Corollary}
\newtheorem{proposition}[thm]{Proposition}
\newtheorem{example}[thm]{Example}
\theoremstyle{definition}
\theoremstyle{remark}
\newtheorem{remark}[thm]{Remark}
\newcommand{\bremark}{\begin{remark} \em}
\newcommand{\eremark}{\end{remark} }
\begin{document}

\title{On the law of  the index of Brownian loops related to the Hopf and anti-de Sitter fibrations}

\author{Fabrice Baudoin\footnote{fbaudoin@math.au.dk}, Teije Kuijper\footnote{t.kuijper@math.au.dk} \\
\\ \textit{Department of Mathematics, Aarhus University}}

\date{}
\maketitle

\begin{abstract}
We give explicit formulas and asymptotics for the distribution of the index of the Brownian loop in the following geometrical settings: the complex projective line from which two points have been removed; the complex hyperbolic line from which one point has been removed; the odd dimensional spheres from which a great hypersphere has been removed; and the complex anti-de Sitter spaces.
Our analysis is based on the geometry of the Hopf and anti-de Sitter fibrations, and on the relationship between winding and area forms.
 \end{abstract}

%\tableofcontents 

\section{Introduction}

In a celebrated 1980's paper  \cite{Yor1980LoiDL} M. Yor obtained an exact formula for the law of the index of the Brownian loop in $\mathbb{C}$. Namely, let us denote by $(X^{z_0}(t))_{0\le t \le L}$ a Brownian motion loop started from $z_0 \neq 0$ and conditioned to go back to $z_0$ at time $L>0$.
Since the point $ 0 $ is polar one can find a continuous process $(\theta(t))_{0 \le t \le L}$ such that 
\[
X^{z_0}(t) = | X^{z_0}(t) | e^{i \theta(t)} ,
\]
for every $t  \in [0,L]$.
The index of the Brownian loop is the discrete random variable $\frac{1}{2\pi} (\theta(L)-\theta(0))$ which takes its values in $\mathbb Z$.
The distribution of this random variable as computed in \cite[Theorem 6.10]{Yor1980LoiDL} is  given by
\begin{align}\label{Yor formula}
\mathbb{P} \left( \theta(L)-\theta(0)=2\pi k \right) =e^{-r} \left( \Phi_r((2k-1)\pi) -\Phi_r((2k+1)\pi) \right)\textrm{ for } k \in \mathbb Z \setminus \{ 0\} ,
\end{align}
where $r:= |z(0)|^2/L$ and 
\[
\Phi_r(x):=\frac{x}{\pi} \int_0^{+\infty} e^{-r \cosh (t)} \frac{dt}{t^2+x^2}\textrm{ for } x \neq 0.
\]
For $k=0$, we have  $\mathbb{P} \left( \theta(L)-\theta(0)=0 \right)=1-2e^{-r}\Phi_r(\pi)$.
By looking at the Fourier transform of the law of the index it can be shown that we recover Spizer's theorem, but then for the Brownian loop, more precisely
\begin{align*}
    \frac{2\theta (L)}{\ln (L)}\rightarrow C_1
\end{align*}
in distribution, as $L\rightarrow\infty$.
Furthermore the law of the index has Cauchy tails, that is
\begin{align*}
    \mathbb{P}(\theta (L)-\theta (0)=2\pi k)\sim_{|k|\rightarrow +\infty}\frac{C(L)}{k^2}
\end{align*}
for some constant $C(L)>0$, see \cite[Corollary  5.21]{Yor1980LoiDL}. We refer to the nice survey \cite{LeGall} by J.F. Le Gall for ramifications and generalizations of the formula \eqref{Yor formula} which illustrate its importance.
We note that the index of the Brownian loop in the plane can be written as a stochastic line integral
\[
\theta(L)-\theta(0) =\int_{X^{z_0}[0,L]} \alpha
\]
where $\alpha=\frac{xdy-ydx}{x^2+y^2}$ is a closed but non exact one-form  which can be called the winding form in $\mathbb{C}\setminus \{0 \}$.  This form $\alpha$ is intrinsic (up to a sign) in the sense that the equivalence class $\frac{1}{2\pi} [\alpha]$ is a generator of the integral de Rham cohomology group $H_1^{dR}(\mathbb{C}\setminus \{0 \}, \mathbb{Z}) \simeq \mathbb{Z}$.

Let $\mathbb M$ be a Riemannian manifold, its integral first de Rham  cohomology group $H_1^{dR}(\mathbb{M}, \mathbb{Z})$ is defined as the set of de Rham equivalence classes of one-forms whose integrals along smooth loops are integers.
If  $H_1^{dR}(\mathbb{M}, \mathbb{Z}) \simeq \mathbb Z$ we say that $\alpha$ is a \emph{winding} form if it  is a generator of $H_1^{dR}(\mathbb{M}, \mathbb{Z})$.  If $\gamma$ is a loop in $\mathbb M$ we define its index by
\[
\mathbf{Ind} (\gamma) :=\int_\gamma \alpha \in \mathbb{Z}.
\]
Note that this index is uniquely defined up to sign. One of our goals in this paper is to compute the exact distribution of the random variable $\mathbf{Ind} (\gamma)$, where $\gamma$ is a Brownian loop, for several different Riemannian manifolds $\mathbb{M}$. To illustrate our results with a simple example, consider for $\mathbb M$ the Lie group $\mathbf{SL}(2,\mathbb R)$ of $2 \times 2$ real unimodular matrices. Since $\mathbf{SL}(2,\mathbb R)$ is diffeomorphic to $\mathbb{R}^2 \times \mathbb S^1$, one has $H_1^{dR}(\mathbf{SL}(2,\mathbb R), \mathbb{Z}) \simeq \mathbb{Z}$. In that case, we prove the following theorem, see Theorem \ref{SL2 case} (with $\mu=1$):

\begin{theorem}\label{theorem intro}
Let $(X(t))_{0 \le t \le L}$ be a  Brownian loop of length $L$ in $\mathbf{SL}(2,\mathbb R)$, i.e. a Brownian motion started  from the identity and conditioned to come back to identity at time $L$.
Then, for every $k \in \mathbb Z$,
\[
\mathbb{P}\left(  \mathbf{Ind}(X[0,L]) =k \right) =C \, e^{-\frac{\pi^2k^2}{L}}  \int_{-\infty}^{+\infty}\cos \left(\frac{\pi k y}{L}  \right) \frac{y}{\sinh(y)}e^{-\frac{y^2}{4L} }dy,
\]
where $C >0$ is the normalization constant. In particular, in distribution, when $L\to +\infty$,
\[
\frac{2\pi\mathbf{Ind}(X[0,L])}{\sqrt{2L}} \to \mathcal{N}(0,1)
\]
where $\mathcal{N}(0,1)$ is a normal random variable with mean zero and variance one.
\end{theorem}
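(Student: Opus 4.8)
The plan is to exploit the anti-de Sitter fibration $\mathbb{S}^1 \hookrightarrow \mathbf{SL}(2,\mathbb{R}) \to \mathbb{H}^2$ onto the hyperbolic plane (the complex hyperbolic line), whose connection one-form is, up to normalization, the winding form $\alpha$ generating $H_1^{dR}(\mathbf{SL}(2,\mathbb{R}),\mathbb{Z})\simeq\mathbb{Z}$. The first step is to record, via the relation between winding and area forms, that $d\alpha=\mu\,\omega$ with $\omega$ the hyperbolic area form; in the normalization $\mu=1$ this lets me write $\mathbf{Ind}(X[0,L])=\tfrac{1}{2\pi}(Z_L-Z_0)$, where $Z$ is the fiber coordinate lifted to the universal cover $\mathbb{R}$ of the $\mathbb{S}^1$ factor. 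The loop condition $X_L=X_0$ then forces the projected path to close up on the base and forces $Z_L-Z_0\in 2\pi\mathbb{Z}$, the index $k$ being exactly the resulting winding number.

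Next I would decompose the Brownian loop through the Riemannian submersion. The projection $\bar X$ is a Brownian motion on $\mathbb{H}^2$, and the vertical coordinate satisfies a stochastic differential equation whose martingale part is the line integral of the connection along $\bar X$, so that $Z_L-Z_0=\beta_L+\mathcal{A}$, where $\beta$ is a one-dimensional Brownian motion independent of $\bar X$ and $\mathcal{A}$ is the stochastic hyperbolic area swept by $\bar X$. Conditioning $X$ to return to the identity turns $\bar X$ into a hyperbolic Brownian bridge based at a point, and renders $\beta_L$ and $\mathcal{A}$ conditionally independent. Hence the density of $Z_L-Z_0$ is the convolution of a centered Gaussian of variance proportional to $L$ with the law of $\mathcal{A}$, and $\mathbb{P}(\mathbf{Ind}=k)$ is proportional to this density evaluated at $2\pi k$ and then normalized over $k\in\mathbb{Z}$.

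The computational heart is the explicit law of the conditioned hyperbolic area, equivalently the heat kernel of $\mathbf{SL}(2,\mathbb{R})$ read through the fibration. Decomposing the heat kernel into Fourier modes along the fiber $\mathbb{S}^1$ turns each mode into the heat kernel of a magnetic Laplacian on $\mathbb{H}^2$, whose diagonal value is classically expressible through the three-dimensional hyperbolic heat kernel and thereby supplies the factor $\tfrac{y}{\sinh y}\,e^{-y^2/(4L)}$; carrying out the Fourier inversion over the winding lattice then produces the Gaussian prefactor $e^{-\pi^2k^2/L}$ together with the oscillatory factor $\cos(\pi k y/L)$, yielding the claimed identity after absorbing all constants into $C$. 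I expect the main obstacle to be precisely this step: obtaining the closed form of the magnetic/hyperbolic heat kernel on the diagonal and tracking every normalization (the powers of $2\pi$, the exact variance of $\beta$, and the role of $\mu$) so that the exponent emerges as $\pi^2k^2/L$ rather than a spurious multiple. The submersion decomposition and the conditioning are conceptually transparent but still require care to justify rigorously.

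Finally, the central limit theorem follows by reading the $k$-dependence off the explicit formula. For $k$ of order $\sqrt{L}$ the integral factor $\int_{-\infty}^{+\infty}\cos(\pi k y/L)\tfrac{y}{\sinh y}e^{-y^2/(4L)}\,dy$ converges, by dominated convergence (the integrand being controlled by the integrable function $|y|/\sinh y$), to the finite constant $\int_{-\infty}^{+\infty}\tfrac{y}{\sinh y}\,dy$, so the law of $\mathbf{Ind}(X[0,L])$ is asymptotically the discrete Gaussian $\propto e^{-\pi^2k^2/L}$ of variance $L/(2\pi^2)$. A local central limit argument then gives $\tfrac{2\pi\mathbf{Ind}(X[0,L])}{\sqrt{2L}}\to\mathcal{N}(0,1)$, since $\tfrac{4\pi^2}{2L}\cdot\tfrac{L}{2\pi^2}=1$.
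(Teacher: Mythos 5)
Your proposal is correct and follows essentially the same route as the paper: the anti-de Sitter fibration, the decomposition of the fiber coordinate into an independent one-dimensional Brownian motion plus the stochastic area on $\mathbb{C}H^1$, the appearance of the three-dimensional hyperbolic heat kernel $\frac{y}{\sinh y}e^{-y^2/(4L)}$ in the conditional density, and the normalization of that density over the lattice $2\pi\mathbb{Z}$ in the spirit of Yor's Lemma 6.1. The differences (deriving the joint density mode by mode versus quoting it from Theorem \ref{joint ant de sitter}, and proving the CLT by a local limit argument rather than through the characteristic function as in Proposition \ref{cor:characteristic_function_winding_number_Sitter}) are inessential, though note that the loop conditioning does not literally render $\beta_L$ and $\mathcal{A}$ conditionally independent --- what you actually use, correctly, is that the index law is the convolution density evaluated on $2\pi\mathbb{Z}$ and renormalized.
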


To prove Theorem \ref{theorem intro} the main strategy is to consider the anti-de Sitter fibration 
\[
\mathbb{S}^1 \to \mathbf{SL}(2,\mathbb{R}) \to \mathbb{C}H^1,
\]
and to realize that a winding form $\alpha$ is given by $\frac{1}{2\pi} (\eta -\omega)$, where $\eta$ is the contact form on $\mathbf{SL}(2,\mathbb R)$ and $\omega$ the pull-back to $\mathbf{SL}(2,\mathbb R)$ of the area form on $\mathbb{C}H^1$.
Therefore, if $(\beta(t))_{ t\ge 0}$ is a Brownian motion on $\mathbf{SL}(2,\mathbb R)$, then
\[
\int_{\beta[0,t]} \alpha=\frac{1}{2\pi} \int_{\beta[0,t]} \eta - \frac{1}{2\pi} \int_{\beta[0,t]} \omega.
\]
It turns out that $ \int_{\beta[0,t]} \eta$ is a one-dimensional Brownian motion independent from the projection of $(\beta(t))_{ t\ge 0}$ onto $\mathbb{C}H^1$ and that $\int_{\beta[0,t]} \omega$ is the stochastic area functional on $\mathbb{C}H^1$ studied in \cite[Section 4.2]{Baudoin2022} and \cite{MR3719061}.
Taking advantage of the fact that the stochastic area functional was already studied one can get formulas, which after computations combined with new ideas and  techniques borrowed from \cite{Yor1980LoiDL}, yield Theorem \ref{theorem intro}.

More generally, in this paper we focus on distributions related to the index of the Brownian loop in the following examples:
\begin{enumerate}
\item $\mathbb{M}=\mathbb{C}P^1 \setminus \{ p,q \}$ where $\mathbb{C}P^1$ is the complex hyperbolic space of real dimension 2 and $p,q$ are diametrally distant points on it;
\item $\mathbb{M}=\mathbb{S}^{2n+1} \setminus \mathcal{C}$ where $\mathbb{S}^{2n+1}$  is a $2n+1$ dimensional sphere and $\mathcal{C}$ a $2n-1$ dimensional great sphere included in $\mathbb{S}^{2n+1}$;
\item $\mathbb{M}=\mathbb{C}H^1 \setminus \{ p \}$ where $\mathbb{C}H^1$ is the complex hyperbolic space of real dimension 2 and $p$ a point in it;
\item $\mathbb{M}=\mathbf{AdS^{2n+1}}$ where $\mathbf{AdS^{2n+1}}$ is the $2n+1$ dimensional complex anti-de Sitter space.
%\item $\mathbb{M}=\mathbf{SL}(2,\mathbb{R})$, the group of real unimodular $2 \times 2$ matrices.
\end{enumerate}

For the examples (ii) and  (iv) the geometric idea is similar to the one for $\mathbf{SL}(2,\mathbb{R})$ which is explained above: The key point is the relation between the winding form $\mathbb M$ and the area form on a quotient space. For all the examples, the main ingredient for explicit computations of characteristic functions is the Yor's transform \cite[Section A.9]{Baudoin2022} which was first introduced in \cite{Yor1980LoiDL}. 
For added generality, we will consider windings of general Brownian bridges rather than only Brownian loops and for the examples (ii) and  (iv) we will even introduce a one-parameter variation of the Brownian motion for which computations are possible.
Geometrically, such a variation is the family of Brownian motions corresponding to the so-called canonical variation of the Riemannian metric of a foliation as in \cite[Chapter 9.G]{Besse2007einstein}.

The paper is organized as follows:
Section 2 deals with windings of Brownian bridges in the complex projective space $\mathbb{C}P^1$ and the odd-dimensional spheres;
Section 3 with  windings of Brownian bridges in the complex hyperbolic space $\mathbb{C}H^1$ and the anti-de Sitter spaces.

\section{Windings on the Hopf fibration} 

\subsection{Preliminaries}

We first start with some preliminaries on some special functions and reminders about Brownian motions on complex projective spaces which will be used throughout this section. Throughout the text we use special functions and some of their basic properties, and we refer to the classical book \cite{special} for a detailed account.

\subsubsection{Jacobi diffusions}

 A Jacobi diffusion  is a diffusion on the interval $[0,\pi/2]$ with generator
\[
\mathcal{L}^{\alpha,\beta}:=\frac{1}{2} \frac{\partial^2}{\partial r^2}+\left(\left(\alpha+\frac{1}{2}\right)\cot (r)-\left(\beta+\frac{1}{2}\right) \tan (r)\right)\frac{\partial}{\partial r}\textrm{ for } \alpha,\beta >-1,
\]
which is defined up to the first time it hits the boundary $\{0, \pi/2\}$.
Let $q_t^{\alpha,\beta}(r(0),\cdot)$ denote the transition density, with respect to the Lebesgue measure on $[0,\pi/2]$, of a Jacobi diffusion   started from $r(0)$.
From \cite[Section B.2]{Baudoin2022} it is known that for $\alpha,\beta \ge 0$, $t>0$ and $r \in (0,\pi/2)$ we have

\begin{align}\label{jacobi heat kernel}
 q^{\alpha,\beta}_t(r(0),r)=2(\cos (r))^{2\beta +1} (\sin (r))^{2\alpha +1}\sum_{m=0}^{+\infty} &(2m+\alpha+\beta+1)\frac{\Gamma(m+\alpha+\beta+1)\Gamma(m+1)}{\Gamma(m+\alpha+1)\Gamma(m+\beta+1)}  \\
  &e^{-2m(m+\alpha+\beta+1)t} P_m^{\alpha,\beta}(\cos (2r(0)))P_m^{\alpha,\beta}(\cos (2r)) \notag,
\end{align}
where
\[
P_m^{\alpha,\beta}(x)=\frac{(-1)^m}{2^mm!(1-x)^{\alpha}(1+x)^\beta}\frac{d^m}{dx^m}((1-x)^{\alpha+m}(1+x)^{\beta+m})
\]
is a Jacobi polynomial.
Let us notice the normalization
\[
P_m^{\alpha,\beta}(1)=\frac{\Gamma(m+\alpha+1)}{\Gamma(m+1)\Gamma(\alpha+1)}.
\]
When $\alpha=\beta \ge 0$,   the Jacobi polynomial $P_m^{\alpha,\alpha}$ reduces to the Gegenbauer polynomial:
\begin{align}\label{eq:Gegenbauer_polynomial_expression}
C_m^{\alpha+1/2}(\cos(2r))=\frac{\Gamma(\alpha+1)\Gamma(m+2\alpha+1)}{\Gamma(2\alpha+1)\Gamma(m+\alpha+1)} P_m^{\alpha,\alpha}(\cos (2r)),
\end{align}
in particular
 \[
 C_m^{\alpha+1/2}(1)=\frac{\Gamma(2\alpha+1+m)}{\Gamma(2\alpha+1)\Gamma(m+1)}.
 \]
From \cite[Theorem 6.7.4]{special} an integral representation of the Gegebauer polynomials is given by
\begin{align}\label{representation Gegenbauer}
  \frac{\Gamma(\alpha+1/2)\sqrt{\pi}}{\Gamma(\alpha+1)}\frac{C_m^{\alpha+1/2}(\cos 2r)}{C_m^{\alpha+1/2}(1)}=\int_0^\pi \left( \cos (2r)+i \sin (2r) \cos (\eta) \right)^m (\sin (\eta))^{2\alpha} d\eta.
\end{align}

\subsubsection{Brownian motion on \texorpdfstring{$\mathbb{C}P^n$}{Cpn}}\label{BM CPn}

The complex projective space\index{projective space!complex } $\mathbb{C}P^n$ can be defined as the set of complex lines in $\mathbb{C}^{n+1}$. To parametrize points in $\mathbb{C}P^n$, it is convenient to use the local affine coordinates given by $w_j=z_j/z_{n+1}$ for $1 \le j \le n$, $z \in \mathbb{C}^{n+1}$, and $z_{n+1}\neq 0$.
In these coordinates, the Riemannian structure of $\mathbb{C}P^n$ is easily worked out from the standard Riemannian structure of the Euclidean sphere.
More precisely, let us consider the unit sphere
\[
\bS^{2n+1} :=\bigg\lbrace z=(z_1,\dots,z_{n+1})\in \mathbb{C}^{n+1} \bigg| \, | z |^2:=\sum_{i=1}^{n+1}|z_i|^2 =1\bigg\rbrace,
\]
and note that the map $\bS^{2n+1} \setminus\{z_{n+1}=0 \} \to \mathbb{C}^n$, $ (z_1,\dots,z_{n+1}) \to (z_1/z_{n+1},\cdots,z_n/z_{n+1})$ is a submersion. $\mathbb{C}^n$ is therefore diffeomorphic to a dense open subset of $\mathbb{C}P^n$. The unique Riemannian metric on $\mathbb{C}^n$ that makes the above submersion a Riemannian submersion induces the so-called \emph{Fubini-Study metric} \index{Fubini Study metric} on $\mathbb{C}P^n$. Explicitly, in affine coordinates, the Fubini-Study Hermitian metric $h$ on $\mathbb{C}P^n$ satisfies
\[
h \left(\frac{\partial}{\partial w_k}, \frac{\partial}{\partial \overline{w}_j} \right)= \frac{(1+|w|^2) \delta_{kj} -\overline{w}_k w_j}{(1+|w|^2)^2}\textrm{ for } 1\le k,j\le n,
\]
and the Laplace-Beltrami operator on $\mathbb{C}P^n$ is given by:

\begin{equation}\label{eq-Laplacian-CPn1}
\Delta_{\mathbb{C}P^n}=4(1+|w|^2)\sum_{k=1}^n \frac{\partial^2}{\partial w_k \partial\overline{w}_k}+ 4(1+|w|^2)\mathcal{R} \overline{\mathcal{R}},
\end{equation}
where $|w|^2 :=\sum_{k=1}^n | w_k|^2$ and
\[
\mathcal{R} :=\sum_{j=1}^n w_j \frac{\partial}{\partial w_j}.
\] 
The Riemannian volume measure is given 
\[
\frac{dw}{(1+|w|^2)^{n+1}},
\]
where $dw$ denotes the Lebesgue measure on $\mathbb{C}^n$.
The Riemannian distance on $\mathbb{C}P^n$ between two points of local affine coordinates $w$ and $\tilde{w}$ will be denoted by $d(w,\tilde{w})$.
One can check that if a point has coordinate $w$, then
\[
d(0,w)=\arctan (|w|).
\]

Brownian motion on $\mathbb{C}P^n$ is a diffusion with generator $\frac{1}{2}\Delta_{\mathbb{C}P^n}$.
The radial part of Brownian motion on $\mathbb{C}P^n$ is a Jacobi diffusion.
More precisely, let us consider Brownian motion $(w(t))_{t \ge 0}$ on $\mathbb{C}P^n$ issued from a point $w(0) $.
The process $r(t):=d(0,w(t))=\arctan (|w(t)|)$ is a Jacobi diffusion with generator $\mathcal{L}^{n-1,0}$ from which it follows that  the density of $w(t)$ with respect to the Lebesgue measure is given at a point $w$ by the formula
\begin{align}\label{heat kernel CPn}
p_{\mathbb{C}P^n}(t,w(0), w)= \frac{\Gamma \left(n \right)}{2 \pi^{n}}  \frac{q_t^{n-1,0}(0,d(w(0),w))}{\cos(d(w(0),w)) \sin(d(w(0),w))^{2n-1} } \frac{1}{(1+|w|^2)^{n+1}},
\end{align}
for $t>0$.
When $w=w(0)$, we of course understand \eqref{heat kernel CPn} as the limit $w \to w(0)$.
Explicitly, thanks to formula \eqref{jacobi heat kernel}, we have
\begin{align*}
p_{\mathbb{C}P^n}(t,w(0), w(0))= \frac{\Gamma \left(n \right)}{ \pi^{n}}  \left( \sum_{m=0}^{+\infty} (2m+n) \frac{\Gamma\left(m+n\right)^2}{\Gamma\left(m+1\right)^2\Gamma\left(n\right)^2} e^{-2m(m+1)t} \right) \frac{1}{(1+|w(0)|^2)^{n+1}}. 
\end{align*}

\subsection{Windings on \texorpdfstring{$\mathbb{C}P^1$}{CP1}}

In this section we will look at the winding number of the Brownian motion $(w(t))_{t \ge 0}$ on $\mathbb{C}P^1$ around the north and south pole.\footnote{In homogeneous coordinates, the north pole is given by $[0:1]$ and the south pole by $[1:0]$.}
We will assume that $(w(t))_{t \ge 0}$ is started from a point $w(0)$ which is neither the north pole nor the south pole.
The north pole $0$ is polar for the Brownian motion $(w(t))_{t\ge 0}$, one can therefore write a polar decomposition
\[
w(t) =| w(t)| e^{i\theta(t)},
\]
where 
\[
\theta(t)=\theta(0)+\frac{1}{2i}\int_0^t \frac{ \overline{w}(t)dw(t)-d\overline{w}(t)w(t)}{|w(t)|^2}
\]
and $\theta(0)$ is such that $w(0)=|w(0)|e^{i\theta(0)}$.
Using rotational invariance, we can assume, without loss of generality, that $\theta(0)=0$.
Consequently, by the above and since we are working in conformal coordinates, it is natural to make the following definition:
\begin{definition}\index{winding! process}
The winding process of the Brownian motion $w(t)$ is defined by 
\begin{equation*}
\theta (t) :=  \frac{1}{2i}\int_0^t \frac{ \overline{w}(s)dw(s)-d\overline{w}(s)w(s)}{|w(s)|^2}\textrm{ for } t \geq 0,
\end{equation*}
where $(w(t))_{t \geq 0}$ is the Brownian motion on $\mathbb{C}P^1$. 
\end{definition}

From \eqref{eq-Laplacian-CPn1} the generator of the Brownian motion on $\mathbb{C}P^1$ in spherical coordinates $(r,\phi)$ given by $w=\tan(r)e^{i\phi}$, reads:
\[
\frac{1}{2} \left( \frac{\partial^2}{\partial r^2} + 2 \cot (2r) \frac{\partial}{\partial r} +\frac{4}{\sin^2 (2r)} \frac{\partial^2}{\partial \phi^2}\right)\textrm{ for } r\in[0,\pi/2]\textrm{ and }\phi \in \mathbb{R},
\]
where $r$ parametrizes the Riemannian distance from 0 in $\mathbb{C}P^1$. This shows that the winding process of Brownian motion in $\mathbb{C}P^1$ is given by
\begin{align}\label{ClockWinding}
\theta(t)=\beta\left(\int_0^t \frac{4ds}{\sin^2 (2r(s))}\right)\textrm{ for } t \geq 0,
\end{align}
where $(r(t))_{t\geq 0}$ is a Jacobi diffusion started at $r(0)\in (0,\pi /2)$ with generator 
\[
\frac{1}{2} \left( \frac{\partial^2}{\partial r^2} + 2 \cot (2r) \frac{\partial}{\partial r} \right)
\]
and $(\beta (t))_{t\geq 0}$ is an one-dimensional Brownian motion independent from $(r(t))_{t\geq 0}$ started from 0.

\begin{proposition}\label{prop:Fourier(t)ransform_CP}
For $t >0$ and $\lambda \in \mathbb R$, we have 
\begin{align}\label{fourier_conditional_radial_CP1}
\mathbb{E} \left( e^{i \lambda \theta (t)}  \mid r(t)\right)=e^{-2|\lambda|(|\lambda|+1)t}\left(\frac{\sin(2r(0))}{\sin(2r(t))}\right)^{|\lambda|}  \frac{q_t^{|\lambda|,|\lambda|}(r(0),r(t))}{q_t^{0,0}(r(0),r(t))}.
\end{align}
\end{proposition}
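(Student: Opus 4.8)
The plan is to exploit the independence built into the time-change representation \eqref{ClockWinding} and then to reduce everything to a Feynman--Kac kernel that can be identified, via a ground-state transform, with a Jacobi transition density of shifted parameters. First I would condition on the entire radial path. Since $\beta$ is a one-dimensional Brownian motion independent of $(r(s))_{0\le s\le t}$ and $\theta(t)=\beta(A_t)$ with $A_t:=\int_0^t \frac{4\,ds}{\sin^2(2r(s))}$, the inner conditional characteristic function is Gaussian,
\[
\mathbb{E}\left(e^{i\lambda\theta(t)}\mid (r(s))_{0\le s\le t}\right)=e^{-\frac{\lambda^2}{2}A_t}.
\]
Because $\sigma(r(t))\subset\sigma((r(s))_{0\le s\le t})$, the tower property gives
\[
\mathbb{E}\left(e^{i\lambda\theta(t)}\mid r(t)\right)=\mathbb{E}\left(e^{-\int_0^t V(r(s))\,ds}\mid r(t)\right),\qquad V(r):=\frac{2\lambda^2}{\sin^2(2r)}.
\]
The right-hand side depends on $\lambda$ only through $\lambda^2$, so it suffices to treat $\lambda\ge 0$ and to write $|\lambda|=\lambda$ at the end.

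Next I would express this conditional Feynman--Kac functional as a ratio of heat kernels. Let $\tilde q_t(r(0),r)$ denote the kernel, with respect to Lebesgue measure on $[0,\pi/2]$, of the Schr\"odinger-type operator $\mathcal{L}^{0,0}-V$, i.e.\ the density of $\mathbb{E}_{r(0)}\!\left(e^{-\int_0^t V(r(s))ds};\,r(t)\in dr\right)$ along the $\mathcal{L}^{0,0}$-diffusion. Conditioning on the endpoint then yields
\[
\mathbb{E}\left(e^{-\int_0^t V(r(s))ds}\mid r(t)=r\right)=\frac{\tilde q_t(r(0),r)}{q_t^{0,0}(r(0),r)},
\]
so it only remains to identify $\tilde q_t$.

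The heart of the argument is a ground-state (Doob $h$-) transform. I would set $h(r):=(\sin(2r))^{|\lambda|}$. Using $2\cot(2r)=\cot r-\tan r$, the drift of $\mathcal{L}^{0,0}$ is $\cot(2r)$, and a direct computation of $h^{-1}\mathcal{L}^{0,0}(h\,\cdot\,)$ produces the new first-order coefficient $\frac{h'}{h}+\cot(2r)=(2|\lambda|+1)\cot(2r)$, which is exactly the drift of $\mathcal{L}^{|\lambda|,|\lambda|}$, together with the zeroth-order term $\frac{\mathcal{L}^{0,0}h}{h}=\frac{2\lambda^2}{\sin^2(2r)}-2|\lambda|(|\lambda|+1)$. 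Since $V$ is a multiplication operator it commutes with $h$, so these combine into the conjugation identity
\[
h^{-1}\left(\mathcal{L}^{0,0}-V\right)h=\mathcal{L}^{|\lambda|,|\lambda|}-2|\lambda|(|\lambda|+1).
\]
This is the step I expect to be the main obstacle: not the calculus, which is routine, but guessing the weight $h$ that absorbs the singular potential $V\sim 1/\sin^2(2r)$ into a shift of the Jacobi parameters; the exponent $|\lambda|$ is forced precisely because $\sin^2(2r)$ enters $V$ with coefficient $2\lambda^2$.

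Finally I would promote the operator identity to the semigroup level, $e^{t(\mathcal{L}^{0,0}-V)}=e^{-2|\lambda|(|\lambda|+1)t}\,h\,e^{t\mathcal{L}^{|\lambda|,|\lambda|}}\,h^{-1}$, and read off the kernels (both against Lebesgue measure, which is unaffected by conjugation by the multiplier $h$):
\[
\tilde q_t(r(0),r)=e^{-2|\lambda|(|\lambda|+1)t}\left(\frac{\sin(2r(0))}{\sin(2r)}\right)^{|\lambda|}q_t^{|\lambda|,|\lambda|}(r(0),r).
\]
Dividing by $q_t^{0,0}(r(0),r)$ and setting $r=r(t)$ gives \eqref{fourier_conditional_radial_CP1}. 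The only point requiring care is the behaviour at the endpoints $r=0,\pi/2$, where $V$ blows up and $h$ vanishes; this is consistent, since the $h$-transform renders the boundary inaccessible for $\mathcal{L}^{|\lambda|,|\lambda|}$, matching the Feynman--Kac suppression of paths approaching the poles.
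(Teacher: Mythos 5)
Your proof is correct: the conjugation identity $h^{-1}\left(\mathcal{L}^{0,0}-V\right)h=\mathcal{L}^{|\lambda|,|\lambda|}-2|\lambda|(|\lambda|+1)$ with $h(r)=(\sin 2r)^{|\lambda|}$ and $V=2\lambda^2/\sin^2(2r)$ does check out, and it yields the stated kernel. It is, however, the analytic face of what the paper does probabilistically. The paper's proof (the ``Yor transform'') starts from the same reduction $\mathbb{E}(e^{i\lambda\theta(t)}f(r(t)))=\mathbb{E}(e^{-\frac{\lambda^2}{2}A_t}f(r(t)))$, splits $\tfrac{4}{\sin^2(2r)}=4+4\cot^2(2r)$ to pull out $e^{-2\lambda^2 t}$, and absorbs the remaining potential by a Girsanov change of measure with density $D_t^\lambda=\exp\left(2\lambda\int_0^t\cot(2r(s))\,d\gamma(s)-2\lambda^2\int_0^t\cot^2(2r(s))\,ds\right)$; It\^o's formula identifies $D_t^\lambda$ with $e^{2\lambda t}\bigl(h(r(t))/h(r(0))\bigr)e^{-2\lambda^2\int_0^t\cot^2(2r(s))ds}$ --- exactly your multiplier and energy shift --- and under $\mathbb{P}^\lambda$ the radial process becomes the $(\lambda,\lambda)$-Jacobi diffusion. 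So the two arguments hinge on the same function $h$ and the same eigenvalue; the difference is execution. The probabilistic route disposes cheaply of the delicate points in yours: the martingale property follows from the explicit bound $D_t^\lambda\le e^{2\lambda t}(\sin 2r(0))^{-\lambda}$, and the singular endpoints $r=0,\pi/2$ never need to be discussed, whereas your semigroup identity $e^{t(\mathcal{L}^{0,0}-V)}=e^{-2|\lambda|(|\lambda|+1)t}\,h\,e^{t\mathcal{L}^{|\lambda|,|\lambda|}}\,h^{-1}$ implicitly requires matching the self-adjoint realizations (equivalently, boundary behaviour) of the two operators, which you flag but do not fully resolve. What your version buys in exchange is conceptual transparency: the exponent $|\lambda|$ is seen to be forced by the singular potential rather than produced by an ansatz, and the conclusion is read off directly at the kernel level instead of by testing against arbitrary bounded Borel $f$.
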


\begin{proof}
We use the Yor transform method explained in all generality in \cite[Section A.9]{Baudoin2022}.
Using symmetry, we can assume $\lambda >0$.
From \eqref{ClockWinding} and the independence of $(r(t))_{t\geq 0}$ and $(\beta (t))_{t\geq 0}$ we have, for every bounded Borel function $f$, that
\[
\mathbb{E}\left( e^{i\lambda  \theta (t)} f(r(t))\right)=\mathbb{E}\left( e^{-\frac{\lambda^2}{2} \int_0^t \frac{4ds}{\sin^2 (2r(s))}} f(r(t))\right)=e^{-2\lambda^2t} \mathbb{E}\left( e^{-2\lambda^2 \int_0^t \cot^2 (2r(s)) ds} f(r(t))\right).
\]
We now observe that the process $r$ is the unique strong solution of a stochastic differential equation
\[
r(t)=r(0)+\int_0^t \cot (2r(s)) ds +\gamma(t),
\]
where $\gamma$ is a Brownian motion.
Let us then consider the local martingale:
\begin{align*}
D_t^\lambda =\exp \left( 2\lambda \int_0^t \cot (2r(s)) d\gamma(s) -2\lambda^2 \int_0^t \cot^2 (2r(s)) ds \right).
\end{align*}
From It\^o's formula, we compute
\[
D_t^\lambda =e^{2\lambda t}\left(\frac{\sin (2r(t))}{\sin (2r(0))}\right)^{\lambda} \exp\left(-2\lambda^2 \int_0^t \cot^2 (2r(s)) ds \right).
\]
Since $D_t\le e^{2\lambda t}(\sin (2r(0)))^{-\lambda}$, we know that $D$ is a martingale and as such, we may introduce the following probability measure $\mathbb{P}^\lambda$:
\[
\mathbb{P}_{/ \mathcal{F}_t} ^\lambda=D_t \mathbb{P}_{/ \mathcal{F}_t}= e^{2\lambda t}\left(\frac{\sin (2r(t))}{\sin (2r(0))}\right)^\lambda e^{- 2\lambda^2 \int_0^t \cot^2 (2r(s))ds} \mathbb{P}_{/ \mathcal{F}_t},
\]
where $\mathcal{F}$ is the natural filtration of $r$.
Denoting $\mathbb{E}^{\lambda}$ the corresponding expectation, we get:
\begin{equation}\label{integration1}
\mathbb{E}\left( e^{i \lambda \theta(t)} f(r(t)) \right)= e^{-2(\lambda^2+\lambda) t}\mathbb E^{\lambda}\left(\left(\frac{\sin (2r(0))}{\sin (2r(t))}\right)^{\lambda} f(r(t)) \right). 
\end{equation}
By Girsanov's theorem we know that the process
\[
\beta (t)=\gamma(t)-2\lambda\int_0^t \cot (2r(s))ds 
\]
is a Brownian motion under $\mathbb{P}^\lambda$.
As a matter of fact, 

\[
d r(t)= d\beta(t)+(2\lambda+1)\cot (2r(t))dt,
\] 
is a Jacobi diffusion under $\mathbb{P}^{\lambda}$ with generator
\[
\mathcal{L}^{\lambda,\lambda} :=\frac{1}{2} \frac{\partial^2}{\partial r^2}+\left(\left(\lambda+\frac{1}{2}\right)\cot (r)-\left(\lambda+\frac{1}{2}\right) \tan (r)\right)\frac{\partial}{\partial r}.
\]
From equation \eqref{integration1}, we therefore obtain
\begin{align*}
  & \int_0^{\pi/2} \mathbb{E}\left( e^{i \lambda \theta(t)} \mid r(t)=r \right)f(r)q_t^{0,0}(r(0),r) dr 
  = e^{-2(\lambda^2+\lambda) t}\int_0^{\pi/2} \left(\frac{\sin (2r(0))}{\sin (2r)}\right)^{\lambda} f(r)q_t^{\lambda,\lambda}(r(0),r) dr.
\end{align*}
Since this holds for every bounded Borel function $f$ the conclusion follows.
\end{proof}

It is not easy to invert the Fourier transform \eqref{fourier_conditional_radial_CP1}, however appealing to some of the methods in \cite{Nizar1} one can prove the following result:

\begin{proposition}
    Let $r \in (0,\pi/2)$.
    The conditional law $\mathbb{P}_{\theta (t) \mid r(t)=r}$ of $\theta (t)$ given $r(t)=r$ is given by
    \[
    d\mathbb{P}_{\theta(t) \mid r(t)=r}(\theta)= f_{r,t} (\theta) d\theta,
    \]
    where 
    \begin{align}\label{fr CP1}
    f_{r,t}(\theta)=\frac{1}{2\pi} \int_{\mathbb{R}}e^{-i\lambda \theta-2|\lambda|(|\lambda|+1)t}\left(\frac{\sin(2r(0))}{\sin(2r)}\right)^{|\lambda|}  \frac{q_t^{|\lambda|,|\lambda|}(r(0),r)}{q_t^{0,0}(r(0),r)} d\lambda
    \end{align}
    is a smooth function on $\mathbb R$ that satisfies for some $C(r,t)>0$,
    \begin{align}\label{CP1: fr}
    f_{r,t} (\theta) \sim_{|\theta| \to +\infty} \frac{C(r,t)}{\theta^2}.
    \end{align}
\end{proposition}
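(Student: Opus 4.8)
The plan is to read the conditional characteristic function off Proposition \ref{prop:Fourier(t)ransform_CP} and to recover both the smoothness of $f_{r,t}$ and its tail purely from the regularity and decay of this function, as in the strategy of \cite{Nizar1}. Write
\[
\hat{f}(\lambda):=\mathbb{E}\left(e^{i\lambda\theta(t)}\mid r(t)=r\right)=G(|\lambda|),\qquad G(s):=e^{-2s(s+1)t}\left(\frac{\sin(2r(0))}{\sin(2r)}\right)^{s}\frac{q_t^{s,s}(r(0),r)}{q_t^{0,0}(r(0),r)},
\]
so that $\hat f$ is real and even, the reflection symmetry $\phi\mapsto-\phi$ of the generator together with $\theta(0)=0$ making the conditional law of $\theta(t)$ symmetric. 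The first task is to control $G$. Using the series \eqref{jacobi heat kernel} I would show that $s\mapsto q_t^{s,s}(r(0),r)$ extends to a real-analytic function of the Jacobi parameter $s\ge 0$ for every fixed $t>0$: each summand is analytic in $\alpha=\beta=s$, and the exponential factor $e^{-2m(m+2s+1)t}$ forces normal convergence of the differentiated series. Since the factor $(\sin(2r(0))/\sin(2r))^{s}$ grows at most exponentially in $s$, the Gaussian factor $e^{-2s^2t}$ dominates and gives $s^n G(s)\in L^1(0,\infty)$ for every $n\ge 0$.

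From $\hat f\in L^1(\mathbb{R})$ the Fourier inversion theorem shows that the conditional law is absolutely continuous with density \eqref{fr CP1}, and the integrability of $\lambda^n\hat f(\lambda)$ for all $n$ permits differentiation under the integral sign, so that $f_{r,t}\in C^\infty(\mathbb{R})$. This settles the smoothness assertion.

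For the tail \eqref{CP1: fr} the essential point is that $\hat f(\lambda)=G(|\lambda|)$ is \emph{not} differentiable at $\lambda=0$: it has a corner of size $G'(0)$. Using evenness I would rewrite
\[
f_{r,t}(\theta)=\frac{1}{\pi}\int_0^{+\infty}\cos(\lambda\theta)\,G(\lambda)\,d\lambda
\]
and integrate by parts twice. Because $G,G'\to 0$ at $+\infty$ and $\sin(0)=0$, the only surviving boundary contribution comes from the corner at the origin and produces the term $-G'(0)/\theta^2$, while the remaining piece $\theta^{-2}\int_0^{+\infty}\cos(\lambda\theta)G''(\lambda)\,d\lambda$ tends to $0$ by the Riemann–Lebesgue lemma, since $G''\in L^1(0,\infty)$ by the regularity and decay established above. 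Hence $f_{r,t}(\theta)\sim-\,G'(0)/(\pi\theta^2)$, so $C(r,t)=-G'(0)/\pi$, with $G'(0)=-2t+\log\left(\sin(2r(0))/\sin(2r)\right)+\partial_s\log q_t^{s,s}(r(0),r)\big|_{s=0}$. Nonnegativity of $f_{r,t}$ together with $G(s)\le G(0)=1$ already forces $G'(0)\le 0$, hence $C(r,t)\ge 0$; a direct inspection of this expression (equivalently, the strict maximum of $G$ at $s=0$) gives $G'(0)<0$ and therefore $C(r,t)>0$.

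I expect the main obstacle to be precisely the regularity of the Jacobi heat kernel in its parameter: justifying that $s\mapsto q_t^{s,s}(r(0),r)$ is $C^2$ (in fact analytic) near $s=0$ with $G''\in L^1(0,\infty)$ requires differentiating the series \eqref{jacobi heat kernel} in the index $\alpha=\beta=s$ — that is, differentiating the Gamma-function prefactors and the polynomials $P_m^{s,s}$ in the parameter — and bounding the resulting series uniformly, together with the Gaussian decay of $G$ and its derivatives at infinity. Everything else, namely the Fourier inversion, the two integrations by parts, and the Riemann–Lebesgue step, is then routine.
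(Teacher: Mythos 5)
Your route is genuinely different from the paper's. The paper proves the tail estimate by explicitly inverting the Fourier transform piece by piece: it factors $\hat f$ into $e^{-2\lambda^2 t}(\sin(2r(0))\sin(2r))^{|\lambda|}$ (a Gaussian convolved with a Cauchy density) times a series $\Psi(\lambda)$, rewrites the Jacobi polynomials as Gegenbauer polynomials, and uses the integral representation \eqref{representation Gegenbauer} together with the duplication formula to exhibit every remaining factor as the Fourier transform of an explicit Cauchy-type kernel; the $1/\theta^2$ decay is then read off from these explicit pieces. You instead keep $\hat f(\lambda)=G(|\lambda|)$ intact, integrate by parts twice on $\frac{1}{\pi}\int_0^\infty\cos(\lambda\theta)G(\lambda)\,d\lambda$, and extract the tail from the corner of $|\lambda|\mapsto G(|\lambda|)$ at the origin, getting $f_{r,t}(\theta)\sim -G'(0^+)/(\pi\theta^2)$. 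This is shorter, and it buys a closed-form tail constant $C(r,t)=-G'(0^+)/\pi$ that the paper's decomposition does not readily deliver; the price is that you must establish $C^2$-regularity and integrable decay of $s\mapsto q_t^{s,s}(r(0),r)$ in the Jacobi \emph{parameter}, which you correctly identify and which does go through by normal convergence of the differentiated series \eqref{jacobi heat kernel} thanks to the factors $e^{-2m(m+2s+1)t}$ and the Gaussian decay $e^{-2s^2t}$. The Fourier inversion, smoothness, and the two integrations by parts with the Riemann--Lebesgue step are all sound.

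There is, however, one genuine gap: the strict positivity $C(r,t)>0$, i.e. $G'(0^+)<0$. Your parenthetical claim that the strict maximum of $G$ at $s=0$ gives $G'(0^+)<0$ is a non sequitur ($G(s)=1-s^2$ has a strict maximum at $0$ with vanishing one-sided derivative), and ``direct inspection'' of $G'(0^+)=-2t+\log(\sin(2r(0))/\sin(2r))+\partial_s\log q_t^{s,s}(r(0),r)\big|_{s=0}$ does not reveal its sign: the three terms do not have a common sign, and only the inequality $G\le G(0)=1$ (hence $G'(0^+)\le 0$) is immediate. The point is not cosmetic, because $G'(0^+)=0$ would destroy the asymptotic equivalence you claim, leaving only $f_{r,t}(\theta)=o(1/\theta^2)$. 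Note that $G(|\lambda|)=\mathbb{E}\bigl(e^{-\frac{\lambda^2}{2}\int_0^t 4\sin^{-2}(2r(s))\,ds}\mid r(t)=r\bigr)$ is a function of $\lambda^2$, so the nonvanishing of the one-sided derivative in $|\lambda|$ is exactly the statement that $1-L(u)\asymp\sqrt{u}$ for the Laplace transform $L$ of the clock, i.e. that the clock $\int_0^t 4\sin^{-2}(2r(s))\,ds$ has a $x^{-1/2}$ tail (equivalently, infinite conditional mean in a quantitative form, coming from the Bessel(2)-type behaviour of $r$ near $0$ and $\pi/2$). You would need either such a probabilistic lower bound on $1-G(s)$, or a direct verification from the series that the explicit expression for $G'(0^+)$ is nonzero; as written, this step is asserted rather than proved. (To be fair, the paper's own proof is also brief about why the limiting constant is strictly positive rather than merely nonnegative, but its decomposition at least exhibits a dominant positive Cauchy-type piece; your argument as stated reduces the whole tail to a single constant whose nonvanishing is left unjustified.)
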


\begin{proof}
Formula \eqref{fr CP1} comes from the Fourier inversion formula applied to formula \eqref{fourier_conditional_radial_CP1}.
To study $f_{r,t}$ we will split 
\[
e^{-2|\lambda|(|\lambda|+1)t}\left(\frac{\sin(2r(0))}{\sin(2r)}\right)^{|\lambda|}  \frac{q_t^{|\lambda|,|\lambda|}(r(0),r)}{q_t^{0,0}(r(0),r)}
\]
into several parts, each of which we can compute the inverse Fourier transform of.
We first observe that 
\[
\lambda \to
e^{-2\lambda^2 t}\left(\sin(2r(0))\sin(2r)\right)^{|\lambda|}= e^{-2\lambda^2 t}e^{|\lambda|\ln [ \sin(2r(0)) \sin(2r) ] }
\]
is the Fourier transform of the convolution of a Gaussian distribution of variance $4t$ with a Cauchy distribution of parameter $-\ln [ \sin(2r(0)) \sin(2r) ]$.
Therefore, to study $f_{r,t}$, it is enough to understand the inverse Fourier transform of 
\[
\Psi(\lambda):=\frac{e^{-2|\lambda|t}}{2^{2|\lambda|}} \sum_{m=0}^{+\infty} (2m+2|\lambda|+1)\frac{\Gamma(m+2|\lambda|+1)\Gamma(m+1)}{\Gamma(m+|\lambda|+1)^2}  
  e^{-2m(m+2|\lambda|+1)t} P_m^{|\lambda|,|\lambda|}(\cos (2r(0)))P_m^{|\lambda|,|\lambda|}(\cos (2r)).
  \]
We now note that the Jacobi polynomial $P_m^{|\lambda|,|\lambda|}$ reduces (with different normalization) to the Gegenbauer polynomial $C_m^{|\lambda|+1/2}$, see equation \eqref{eq:Gegenbauer_polynomial_expression}.

Taking this into account, we obtain
 \[
\Psi(\lambda)= \frac{e^{-2|\lambda|t}}{2^{2|\lambda|}}\sum_{m=0}^{+\infty} (2m+2|\lambda|+1)\frac{\Gamma(m+2|\lambda|+1)}{\Gamma(|\lambda|+1)^2\Gamma(m+1)}  
  e^{-2m(m+2|\lambda|+1)t} \frac{C_m^{|\lambda|+1/2}(\cos (2r(0)))C_m^{|\lambda|+1/2}(\cos (2r))}{C_m^{|\lambda|+1/2}(1)^2}.
  \]
We now use the integral representation of Gegenbauer polynomials given by equation \eqref{representation Gegenbauer}
%  \[
%  \frac{\Gamma(| \lambda|+1/2)\sqrt{\pi}}{\Gamma(| \lambda|+1)}\frac{C_m^{|\lambda|+1/2}(\cos 2r)}{C_m^{|\lambda|+1/2}(1)}=\int_0^\pi \left( \cos (2r)+i \sin (2r) \cos (\eta) \right)^m \sin (\eta)^{2|\lambda|} d\eta
%  \]
to see that $\Psi (\lambda)$ can be written as
\[
\Psi(\lambda) = \frac{1}{2^{2|\lambda|}}\frac{\Gamma(2|\lambda|+1)}{\Gamma(|\lambda|+1/2)\Gamma(|\lambda|+3/2)}\sum_{m=0}^{+\infty} Y_m(|\lambda| ) e^{-(2m(m+2|\lambda|+1)+2|\lambda|)t} \Phi^m_{r}(\lambda) \Phi^m_{r(0)}(\lambda),
\]
where $Y_m$ is a polynomial of degree $m+2$ and 
\begin{align*}
  \Phi^m_{r}(\lambda):=& \int_0^\pi \left( \cos (2r)+i \sin (2r) \cos (\eta) \right)^m (\sin (\eta ))^{2|\lambda|} d\eta \\
  =&\frac{1}{\pi}\int_{\mathbb{R}} e^{i\lambda \theta} \int_0^\pi \left( \cos (2r)+i \sin (2r) \cos (\eta )\right)^m \frac{-2\ln(\sin (\eta ))}{4\ln(\sin(\eta))^2+\theta^2}d\eta \, d\theta.
\end{align*}
The inverse Fourier transform of the term $Y_m(|\lambda| ) e^{-(2m(m+2|\lambda|+1)+2|\lambda|)t} $ is easy to compute since it is a linear combination of derivatives with respect to $t$ of $e^{-(4m+2)|\lambda|t}$, which is the Fourier transform of a Cauchy distribution.
We finally have from the duplication formula for the $\Gamma$-function (see \cite[Theorem 1.5.1]{special})
  \begin{align*}
  \frac{1}{2^{2|\lambda|}}\frac{\Gamma(2|\lambda|+1)}{\Gamma(|\lambda|+1/2)\Gamma(|\lambda|+3/2)}&=\frac{\Gamma(|\lambda|+1)}{\Gamma(1/2)\Gamma(|\lambda|+3/2)} \\
   &=\frac{1}{\pi} \int_0^1 \frac{s^{|\lambda|}}{\sqrt{1-s}}ds\\
   &=\frac{1}{\pi^2}\int_{\mathbb{R}} e^{i\lambda \theta}\int_0^1  \frac{-\ln(s)}{\ln(s)^2+\theta^2}\frac{ds}{\sqrt{1-s}} d\theta.
  \end{align*}
At this point, we inverted all the pieces making up $f_{r,t}$.
Writing an explicit formula for $f_{r,t}$ appears to be cumbersome, however it can be seen from our analysis that, when $|\theta| \to +\infty$ all the pieces decay at least as fast as $1/|\theta|^2$, hence the conclusion.
\end{proof}

We are now ready for the formula that gives the distribution of the index of the Brownian bridge.

\begin{theorem}\label{law winding CP1}
  
For $n \in \mathbb Z$, $t>0$, $w=\tan (r) e^{i \theta} \in \mathbb{C}P^1$, $r\in (0,\pi/2)$ and $\theta \in [0,2\pi)$ we have
\[
\mathbb{P}\left(  \theta(t) =\theta +2\pi n \mid w(t)= w\right)=2\pi \frac{\sin(2d(w(0),w))}{\sin(2d(0,w))} \frac{q_t^{0,0}(d(0,w(0)), d(0,w)))}{q_t^{0,0}(0,d(w(0),w))} \, f_{r,t}(\theta +2\pi n),
\]
and therefore  
\[
\mathbb{P}\left(  \theta(t) =\theta +2\pi n \mid w(t)= w\right) \sim_{|n| \to +\infty} \frac{C(w,t)}{n^2}
\]
for some $C(w,t) >0$. In particular, the law of the index of the Brownian loop of length $t$ is given by
\[
\mathbb{P}\left(  \theta(t) =2\pi n \mid w(t)= w(0) \right)=2\pi  \frac{\sum_{m=0}^{+\infty} (2m+1) e^{-2m(m+1)t} P_m^{0,0}(\cos (r(0)))^2}{ \sum_{m=0}^{+\infty} (2m+1) e^{-2m(m+1)t} } \, f_{r(0),t}(2\pi n).
\]
\end{theorem}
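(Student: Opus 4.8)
The plan is to reduce the conditional law of $\theta(t)$ given $w(t)=w$ to the unconditional winding density $f_{r,t}$ from \eqref{fr CP1}. The starting observation is that, since $w=\tan(r)e^{i\theta}$ with $\theta\in[0,2\pi)$, the event $\{w(t)=w\}$ is exactly $\{r(t)=r\}\cap\{\theta(t)\in\theta+2\pi\mathbb Z\}$: fixing the endpoint $w$ pins down the radial coordinate $r(t)=d(0,w)=r$, but pins down the winding $\theta(t)$ only modulo $2\pi$. Since, conditionally on $r(t)=r$, the unwrapped winding has density $f_{r,t}$ on $\mathbb R$, a disintegration argument shows that the conditional law of $\theta(t)$ given $w(t)=w$ is the restriction of $f_{r,t}$ to the progression $\theta+2\pi\mathbb Z$, renormalized to a probability mass function:
\[
\mathbb P\big(\theta(t)=\theta+2\pi n\mid w(t)=w\big)=\frac{f_{r,t}(\theta+2\pi n)}{\sum_{k\in\mathbb Z}f_{r,t}(\theta+2\pi k)}.
\]
Thus everything reduces to evaluating the normalizing sum $\sum_k f_{r,t}(\theta+2\pi k)$, and the content of the theorem is that its reciprocal equals the stated prefactor.

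To compute the sum I would identify the wrapped winding density with the heat kernel. Since $(r(t),\theta(t))$ has joint density $q_t^{0,0}(r(0),r)\,f_{r,t}(\psi)$ with respect to $dr\,d\psi$ (radial density times conditional winding density, the latter furnished by the previous proposition), summing over the fibre gives the density of $(r(t),\theta(t)\bmod 2\pi)$, namely $q_t^{0,0}(r(0),r)\sum_k f_{r,t}(\theta+2\pi k)$. On the other hand this must agree with the law of $w(t)$ read in polar coordinates: transporting the Lebesgue density $p_{\mathbb{C}P^1}(t,w(0),w)$ of \eqref{heat kernel CPn} through $w=\tan(r)e^{i\theta}$, whose Jacobian is $dw=\tan(r)\sec^2(r)\,dr\,d\theta$, yields $p_{\mathbb{C}P^1}(t,w(0),w)\tan(r)\sec^2(r)$. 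Equating the two and using $1+|w|^2=\sec^2(r)$, $\cos d\,\sin d=\tfrac12\sin(2d)$, together with $r=d(0,w)$ and $r(0)=d(0,w(0))$, the algebra collapses to
\[
\sum_{k}f_{r,t}(\theta+2\pi k)=\frac{1}{2\pi}\,\frac{\sin(2d(0,w))}{\sin(2d(w(0),w))}\,\frac{q_t^{0,0}(0,d(w(0),w))}{q_t^{0,0}(d(0,w(0)),d(0,w))},
\]
whose reciprocal is precisely the prefactor in the theorem. I expect this identification to be the main obstacle: one must match $f_{r,t}$, which lives on the universal cover via the unwrapped winding, with the heat kernel on $\mathbb{C}P^1$, getting both the change-of-variables Jacobian and the interchange of the wrapping sum with the factorization correct. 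Everything else is bookkeeping.

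The tail estimate is then immediate: the prefactor does not depend on $n$, and by \eqref{CP1: fr} we have $f_{r,t}(\theta+2\pi n)\sim C(r,t)/(\theta+2\pi n)^2\sim C(r,t)/(2\pi n)^2$ as $|n|\to\infty$, so the conditional mass is $\sim C(w,t)/n^2$ with $C(w,t)$ absorbing the prefactor. For the Brownian loop I would specialize by letting $w\to w(0)$ (hence $\theta=0$ and $d(w(0),w)\to 0$) and evaluate the limiting prefactor using the explicit Jacobi series \eqref{jacobi heat kernel} at $\alpha=\beta=0$. Writing $q_t^{0,0}(0,d)=\sin(2d)\sum_m(2m+1)e^{-2m(m+1)t}P_m^{0,0}(\cos 2d)$ (using $P_m^{0,0}(1)=1$) gives $\sin(2d)/q_t^{0,0}(0,d)\to\big(\sum_m(2m+1)e^{-2m(m+1)t}\big)^{-1}$ as $d\to0$, while the diagonal value satisfies $q_t^{0,0}(r(0),r(0))/\sin(2r(0))=\sum_m(2m+1)e^{-2m(m+1)t}P_m^{0,0}(\cos 2r(0))^2$; combining these limits yields the stated ratio of series times $f_{r(0),t}(2\pi n)$.
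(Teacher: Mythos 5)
Your proposal is correct and follows essentially the same route as the paper: the paper cites Lemma 6.1 of Yor's article for the fact that the conditional law of $\theta(t)$ given $w(t)=w$ is the wrapped density $f_{r,t}$ restricted to $\theta+2\pi\mathbb{Z}$ and normalized by a ratio of marginal densities, which you re-derive by hand via the disintegration of the joint law of $(r(t),\theta(t))$ and the polar change of variables, and the remaining steps (Jacobian bookkeeping, the tail estimate from \eqref{CP1: fr}, and the $w\to w(0)$ specialization via \eqref{jacobi heat kernel}) match the paper's. One small remark: your loop formula produces $P_m^{0,0}(\cos(2r(0)))^2$, which is what \eqref{jacobi heat kernel} actually gives, whereas the theorem as printed has $P_m^{0,0}(\cos(r(0)))^2$ --- this appears to be a typo in the statement rather than an error on your part.
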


\begin{proof}
From Lemma 6.1 in \cite{Yor1980LoiDL} the conditional law of $\theta(t)$ given $w(t)=\tan (r) e^{i \theta}$ is given by 
\[
d\mathbb{P}_{\theta(t) |w(t)=\tan (r) e^{i \theta}} 
=\frac{U(\tan (r))}{\tan (r) \,  p_{\mathbb{C}P^1}(t,w(0), w)}\sum_{n\in\mathbb{Z}} f_{r,t}(\theta +2\pi n)\delta_{\theta +2\pi n} ,
\]
where we denote by $\delta_{\theta +2\pi n}$ the Dirac distribution at $\theta +2\pi n$ and by $U$ the density of $\tan(r(t))$ with respect to the Lebesgue measure.
The conclusion then follows from \eqref{heat kernel CPn} and a simple change of variable.
The asymptotics, when $|n|\to +\infty$, are a consequence of \eqref{CP1: fr}.
\end{proof}

\begin{corollary}\label{characteristic function bridge CP1}
For $\lambda \in \mathbb R$, $t>0$, $w=\tan (r) e^{i \theta} \in \mathbb{C}P^1$, $r\in (0,\pi/2)$ and $\theta \in [0,2\pi)$ we have

\begin{align*}
& \mathbb{E} \left( e^{i \lambda \theta (t)}  \mid w(t)=w\right) \\
=& \frac{\sin(2d(w(0),w))}{\sin(2r) q_t^{0,0}(0,d(w(0),w))} \sum_{n\in \mathbb{Z}}e^{-2|\lambda+n|(|\lambda+n|+1)t-in\theta}\left(\frac{\sin(2r(0))}{\sin(2r)}\right)^{|\lambda+n|}  q_t^{|\lambda+n|,|\lambda+n|}(r(0),r).
\end{align*}
In particular, for the characteristic function of the index of the Brownian loop of length $t$ one obtains
\[
\mathbb{E} \left( e^{i \lambda \theta (t)}  \mid w(t)=w(0)\right)= \frac{1}{\sin(2r) } \frac{\sum_{n\in \mathbb{Z}}e^{-2|\lambda+n|(|\lambda+n|+1)t}  q_t^{|\lambda+n|,|\lambda+n|}(r(0),r(0))}{ \sum_{m=0}^{+\infty} (2m+1) e^{-2m(m+1)t} }.
\]
\end{corollary}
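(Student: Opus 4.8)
The plan is to read the conditional characteristic function straight off the conditional law established in Theorem \ref{law winding CP1}, and then to recognize that the integrand appearing in \eqref{fr CP1} is precisely the Fourier transform of $f_{r,t}$, so that the sum over the coset $\theta+2\pi\mathbb Z$ collapses through a Poisson summation. Since, conditionally on $w(t)=w=\tan(r)e^{i\theta}$, the variable $\theta(t)$ is supported on the arithmetic progression $\{\theta+2\pi n : n\in\mathbb Z\}$, I would start from
\[
\mathbb{E}\left(e^{i\lambda\theta(t)}\mid w(t)=w\right)=\sum_{n\in\mathbb Z}e^{i\lambda(\theta+2\pi n)}\,\mathbb{P}\left(\theta(t)=\theta+2\pi n\mid w(t)=w\right),
\]
substitute the expression from Theorem \ref{law winding CP1}, and factor out the $n$-independent prefactor $2\pi\,\sin(2d(w(0),w))/\sin(2r)\cdot q_t^{0,0}(r(0),r)/q_t^{0,0}(0,d(w(0),w))$, which leaves the lattice sum $\sum_n e^{i\lambda(\theta+2\pi n)}f_{r,t}(\theta+2\pi n)$.

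The heart of the argument is evaluating this lattice sum. Writing $g(\mu)$ for the integrand in \eqref{fr CP1}, the inversion formula there says exactly that $f_{r,t}$ is the inverse Fourier transform of $g$, so that $g$ is the Fourier transform of $f_{r,t}$. I would then insert the integral representation of $f_{r,t}$, interchange sum and integral, and use the Dirac-comb identity $\sum_{n}e^{2\pi i(\lambda-\mu)n}=\sum_{k}\delta(\lambda-\mu-k)$ (equivalently, compute the Fourier coefficients of the $2\pi$-periodic function $\theta\mapsto\sum_n e^{i\lambda(\theta+2\pi n)}f_{r,t}(\theta+2\pi n)$), obtaining
\[
\sum_{n\in\mathbb Z}e^{i\lambda(\theta+2\pi n)}f_{r,t}(\theta+2\pi n)=\frac{1}{2\pi}\sum_{n\in\mathbb Z}e^{-in\theta}\,g(\lambda+n).
\]
Substituting $g(\lambda+n)=e^{-2|\lambda+n|(|\lambda+n|+1)t}(\sin(2r(0))/\sin(2r))^{|\lambda+n|}q_t^{|\lambda+n|,|\lambda+n|}(r(0),r)/q_t^{0,0}(r(0),r)$, the factor $2\pi$ cancels, the stray $q_t^{0,0}(r(0),r)$ cancels against the one in the prefactor, and the first displayed formula of the Corollary drops out.

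For the loop formula I would pass to the limit $w\to w(0)$, in which $d(w(0),w)\to 0$, $\theta\to 0$ and $r\to r(0)$. The only delicate point is the indeterminate ratio $\sin(2d(w(0),w))/q_t^{0,0}(0,d(w(0),w))$: using the explicit series \eqref{jacobi heat kernel} with $\alpha=\beta=0$ together with $P_m^{0,0}(1)=1$, one has $q_t^{0,0}(0,\rho)=\sin(2\rho)\sum_{m\ge0}(2m+1)e^{-2m(m+1)t}P_m^{0,0}(\cos 2\rho)$, so that $\sin(2\rho)/q_t^{0,0}(0,\rho)\to\big(\sum_{m\ge0}(2m+1)e^{-2m(m+1)t}\big)^{-1}$ as $\rho\to0$. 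Setting $r=r(0)$ kills the factor $(\sin(2r(0))/\sin(2r))^{|\lambda+n|}$ and $\theta=0$ kills $e^{-in\theta}$, which gives the stated expression.

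The step I expect to be the main obstacle is the rigorous justification of the lattice-sum evaluation: one must legitimize interchanging the summation over $n$ with the Fourier integral defining $f_{r,t}$ and guarantee convergence of the resulting series. Here the tail estimate \eqref{CP1: fr}, namely $f_{r,t}(\theta)\sim C(r,t)/\theta^2$, ensures absolute convergence of the periodization, while the Gaussian-type decay of $g(\mu)$ in $\mu$ (coming from the factor $e^{-2\mu^2 t}$ implicit in \eqref{fourier_conditional_radial_CP1}) ensures absolute and uniform convergence of $\sum_n g(\lambda+n)e^{-in\theta}$; together these make the Poisson summation identity valid, and the remaining manipulations are purely algebraic.
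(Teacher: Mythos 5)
Your proposal is correct and follows essentially the same route as the paper: reading the lattice sum off Theorem \ref{law winding CP1}, evaluating it by Poisson summation (justified via the tail estimate \eqref{CP1: fr}), and substituting the conditional characteristic function from Proposition \ref{prop:Fourier(t)ransform_CP}. Your explicit limit computation $\sin(2\rho)/q_t^{0,0}(0,\rho)\to\bigl(\sum_{m\ge 0}(2m+1)e^{-2m(m+1)t}\bigr)^{-1}$ for the loop case is a detail the paper leaves implicit, and it is carried out correctly.
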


\begin{proof}
Let $w=\tan (r) e^{i \theta} \in \mathbb{C}P^1$, $r\in (0,\pi/2)$, $\theta \in [0,2\pi)$.
From Theorem \ref{law winding CP1} we have 
    \begin{align*}
        \mathbb{E} \left( e^{i \lambda \theta (t)}  \mid w(t)=w\right)=2\pi \frac{\sin(2d(w(0),w))}{\sin(2r)} \frac{q_t^{0,0}(r(0), r))}{q_t^{0,0}(0,d(w(0),w))}\sum_{n \in \mathbb{Z}} f_{r,t}(\theta +2\pi n)e^{i\lambda (\theta+2\pi n)} .
    \end{align*}
 We now appeal to the Poisson summation formula \cite[Theorem 3.2.8]{Grafakos}, whose assumptions can be checked thanks to \eqref{CP1: fr}, to obtain
 \begin{align*}
 \sum_{n\in\mathbb{Z}} f_{r,t}( \theta +2\pi n)e^{2i\pi n \lambda }&=\sum_{n \in \mathbb Z} \int_{\mathbb R} f_{r,t}( \theta +2\pi \xi)e^{i(2\pi \lambda +2n\pi) \xi} d\xi \\
  &=\frac{1}{2\pi} \sum_{n \in \mathbb Z} \int_{\mathbb R}   f_{r,t}( u) e^{i(\lambda +n)(u-\theta)} du \\
  &=\frac{1}{2\pi} \sum_{n \in \mathbb Z}  \mathbb{E} \left( e^{i(\lambda +n)(\theta(t)-\theta)} \mid r(t)=r \right).
 \end{align*}
 The conclusion now easily follows from \eqref{fourier_conditional_radial_CP1} which yields
 \[
  \mathbb{E} \left( e^{i(\lambda +n)\theta(t)} \mid r(t)=r \right)=e^{-2|\lambda+n|(|\lambda+n|+1)t}\left(\frac{\sin(2r(0))}{\sin(2r)}\right)^{|\lambda+n|}  \frac{q_t^{|\lambda+n|,|\lambda+n|}(r(0),r)}{q_t^{0,0}(r(0),r)}
  \]
\end{proof}

\begin{proposition}
Let  $w=\tan (r) e^{i \theta} \in \mathbb{C}P^1$, $r\in (0,\pi/2)$,  and $\theta \in [0,2\pi)$.
Then, we have for every $\lambda \in \mathbb R$
\[
\lim_{t \to +\infty} \mathbb{E} \left( e^{i \lambda \frac{\theta (t)}{t}}  \mid w(t)=w\right)= e^{-2|\lambda|t}.
\]
Therefore, conditional on $w(t)=w$, $\frac{\theta(t)}{t}$ converges in distribution to a Cauchy distribution with parameter 2, when $t \to +\infty$.
\end{proposition}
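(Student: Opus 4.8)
The plan is to start from the exact conditional characteristic function obtained in Corollary~\ref{characteristic function bridge CP1}, substitute $\lambda\mapsto\lambda/t$, and pass to the limit $t\to+\infty$ term by term. Writing $a_n:=|\frac{\lambda}{t}+n|$, the corollary gives
\[
\mathbb{E}\left(e^{i\frac{\lambda}{t}\theta(t)}\mid w(t)=w\right)=\frac{\sin(2d(w(0),w))}{\sin(2r)\,q_t^{0,0}(0,d(w(0),w))}\sum_{n\in\mathbb Z}S_n(t),
\]
where $S_n(t)=e^{-2a_n(a_n+1)t-in\theta}\left(\frac{\sin(2r(0))}{\sin(2r)}\right)^{a_n}q_t^{a_n,a_n}(r(0),r)$. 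I would first record the large-time behaviour of the Jacobi kernel: from \eqref{jacobi heat kernel}, since $P_0^{\alpha,\alpha}\equiv1$ and every $m\ge1$ term carries the factor $e^{-2m(m+2\alpha+1)t}\to0$, one has, for fixed $u,v\in(0,\pi/2)$,
\[
q_t^{\alpha,\alpha}(u,v)\xrightarrow[t\to+\infty]{}2(\cos v)^{2\alpha+1}(\sin v)^{2\alpha+1}(2\alpha+1)\frac{\Gamma(2\alpha+1)}{\Gamma(\alpha+1)^2},
\]
and in particular $q_t^{0,0}(0,\rho)\to\sin(2\rho)$ for any $\rho\in(0,\pi/2)$.

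Applying this to the prefactor gives $q_t^{0,0}(0,d(w(0),w))\to\sin(2d(w(0),w))$, so the prefactor converges to $1/\sin(2r)$. For the surviving summand $n=0$ we have $a_0=|\lambda|/t\to0$, whence the exponent $-2a_0(a_0+1)t\to-2|\lambda|$, the power $(\sin(2r(0))/\sin(2r))^{a_0}\to1$, and $q_t^{a_0,a_0}(r(0),r)\to\sin(2r)$ by the display above (with $\alpha=a_0\to0$). Therefore $S_0(t)\to e^{-2|\lambda|}\sin(2r)$, and multiplying by the limit of the prefactor produces exactly $e^{-2|\lambda|}$.

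It remains to show that the remaining terms do not contribute. For $|n|\ge1$ and $t$ large, $a_n\ge|n|-|\lambda|/t\ge|n|/2$, so $e^{-2a_n(a_n+1)t}$ decays at least like $e^{-n^2 t/2}$; this Gaussian-in-$n$, exponentially-in-$t$ decay dominates the at most exponential-in-$n$ growth of $(\sin(2r(0))/\sin(2r))^{a_n}$ together with the factor $q_t^{a_n,a_n}(r(0),r)$, which one checks is bounded uniformly for $t\ge1$ and $n\in\mathbb Z$. Hence each $S_n(t)\to0$ for $n\ne0$, and the whole family is dominated by a fixed summable sequence, so dominated convergence for series justifies exchanging the limit and the sum and yields $\lim_{t\to+\infty}\mathbb E(e^{i\frac{\lambda}{t}\theta(t)}\mid w(t)=w)=e^{-2|\lambda|}$, a limit that is correctly independent of $t$. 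Finally, since $\lambda\mapsto e^{-2|\lambda|}$ is the characteristic function of the Cauchy law of parameter $2$ and is continuous at the origin, L\'evy's continuity theorem upgrades this pointwise convergence of characteristic functions to convergence in distribution of $\theta(t)/t$.

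The main obstacle is the uniform control of the Jacobi heat kernel $q_t^{a_n,a_n}(r(0),r)$: one must bound it simultaneously for all $n$ and all large $t$ in order to dominate the series. This can be handled using the endpoint bound $|P_m^{\alpha,\alpha}(x)|\le P_m^{\alpha,\alpha}(1)$ for $|x|\le1$ together with the prefactor $(\cos v\sin v)^{2\alpha+1}=(\tfrac12\sin 2v)^{2\alpha+1}$, whose exponential decay in $\alpha$ offsets the growth of the Gamma-ratios; alternatively, the Chapman--Kolmogorov identity $q_t^{\alpha,\alpha}(u,v)\le\sup_{x}q_1^{\alpha,\alpha}(x,v)$ for $t\ge1$ reduces the matter to a single fixed time.
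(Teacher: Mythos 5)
Your proposal follows exactly the paper's (very terse) proof: substitute $\lambda/t$ into Corollary \ref{characteristic function bridge CP1} and extract the limit from formula \eqref{jacobi heat kernel}, so the approach is the same; your write-up correctly supplies the details the paper omits, namely the isolation of the $n=0$ term, the domination of the $n\neq 0$ terms by the Gaussian-in-$n$ factor $e^{-2a_n(a_n+1)t}$, and the appeal to L\'evy's continuity theorem, and it rightly notes that the stated limit should read $e^{-2|\lambda|}$ rather than $e^{-2|\lambda|t}$. The only nitpick is that $q_t^{a_n,a_n}(r(0),r)$ need not be uniformly \emph{bounded} in $n$ (at $r=\pi/4$ the $m=0$ term grows like a power of $a_n$), but the sub-exponential bound your sketched Gamma-ratio estimate actually yields is amply sufficient for the domination argument.
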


\begin{proof}
From Corollary \ref{characteristic function bridge CP1} one can write
\begin{align*}
 & \mathbb{E} \left( e^{i \frac{\lambda}{t} \theta (t)}  \mid w(t)=w\right) \\
 =& \frac{\sin(2d(w(0),w))}{\sin(2r) q_t^{0,0}(0,d(w(0),w))} \sum_{n\in \mathbb{Z}}e^{-2|\frac{\lambda}{t}+n|(|\lambda+n|+1)t-in\theta}\left(\frac{\sin(2r(0))}{\sin(2r)}\right)^{|\frac{\lambda}{t}+n|}  q_t^{|\frac{\lambda}{t}+n|,|\frac{\lambda}{t}+n|}(r(0),r)
\end{align*}
and the result then follows using formula \eqref{jacobi heat kernel}.
\end{proof}

\subsection{Windings on \texorpdfstring{$\mathbb{S}^{2n+1}$}{S(2n+1)}}

We consider now the odd-dimensional sphere
\[
\mathbb{S}^{2n+1}=\left\{ z=(z_1,\cdots,z_{n+1}) \in \mathbb{C}^n\bigg| |z|^2:=\sum_{i=1}^n |z_i|^2=1 \right\}
\]
which is equipped with its canonical Riemannian structure. We parametrize the set  
\[
\mathbb{S}^{2n+1}\setminus \{ z \in \mathbb{S}^{2n+1} \mid z_{n+1} = 0 \}
\]
using the coordinates
\begin{align}\label{invarCP}
(w,\theta)\longrightarrow \frac{e^{i\theta} }{\sqrt{1+|w|^2}} \left( w,1\right),
\end{align}
where $\theta \in [0,2\pi)$ and $w \in \mathbb{C}^n$.
There is an isometric group action of the circle $\mathbb{S}^1=\mathbf{U}(1)$ on $\bS^{2n+1}$, which is defined by $$e^{i\theta}\cdot(z_1,\dots, z_n) = (e^{i\theta} z_1,\dots, e^{i\theta} z_n). $$

The quotient space $\bS^{2n+1} / \mathbf{U}(1)$ can be identified with $\mathbb{C}P^n$ and the projection map $$\pi : \bS^{2n+1} \to \mathbb{C}P^n$$ is a Riemannian submersion with totally geodesic fibers isometric to $\mathbf{U}(1)$.
The resulting fibration
\[
\mathbf{U}(1) \to \bS^{2n+1} \to \mathbb{C}P^n
\]
 is called the \emph{Hopf fibration}. From \eqref{invarCP}, it is clear that $\pi \left(\frac{e^{i\theta} }{\sqrt{1+|w|^2}} \left( w,1\right) \right)$ is the point in $\mathbb{C}P^n$ with local affine coordinates $w$ and that $\theta$ is the fiber coordinate in the sense that the vector field $\frac{\partial}{\partial \theta}$ is the generator of the $\mathbb{S}^1$ action on $\mathbb{S}^{2n+1}$.
 Therefore the coordinates $(w,\theta)$ are adapted to the geometry of the Hopf fibration.

 \begin{definition}
The one-form $d\theta$ on $\mathbb{S}^{2n+1}\setminus \{ z \in \mathbb{S}^{2n+1} \mid z_{n+1} = 0 \}$ is called the winding form around the great hypersphere $\{ z \in \mathbb{S}^{2n+1} \mid z_{n+1} = 0 \}$.
 \end{definition}

Note that $d\theta$ is closed but not exact and that the integral of  $d\theta$ along any loop is an integer multiple of $2\pi$.
From \eqref{invarCP}, one sees that $\mathbb{S}^{2n+1}\setminus \{ z \in \mathbb{S}^{2n+1} \mid z_{n+1} = 0 \}$ is diffeomorphic to $\mathbb{C}^n \times \mathbb{S}^1$.
Therefore the integral de Rham cohomology group $H_1^{dR}(\mathbb{S}^{2n+1}\setminus \{ z \in \mathbb{S}^{2n+1} \mid z_{n+1} = 0 \}, \mathbb{Z})$ is isomorphic to $\mathbb{Z}$ and $\frac{1}{2\pi} d\theta$ is one of the two generators of that cohomology. One of our goals in this section will to compute the exact  distribution of the law of the integral of $d\theta$ along Brownian loops. 
 
The Laplacian on $\mathbb{S}^{2n+1}$ will be denoted by $\Delta_{\mathbb{S}^{2n+1}}$ and for $\mu \ge 0$ we consider the \emph{Berger Laplacian}
 \[
\Delta_{\mathbb{S}^{2n+1}}^\mu:=\Delta_{\mathbb{S}^{2n+1}}+\left( \mu^2-1\right) \frac{\partial^2}{\partial \theta^2}.
 \]
For $\mu>0$, this operator is the Laplace-Beltrami operator of the so-called Berger metric on $\mathbb{S}^{2n+1}$ and for  $\mu=0$, $\Delta_{\mathbb{S}^{2n+1}}^\mu$ is the horizontal Laplacian of the Hopf fibration,  see \cite[Section 4.1.4]{Baudoin2022}.

Throughout this section, we fix $\mu \ge 0$ and we denote by $(\beta^\mu(t))_{t \ge 0}$ a diffusion process with generator $\frac{1}{2} \Delta_{\mathbb{S}^{2n+1}}^\mu$, which is started from the point with coordinates $\frac{e^{i\theta(0)} }{\sqrt{1+|w(0)|^2}} \left( w(0),1\right) $.
By rotational invariance, we can assume, without loss of generality, that $\theta(0)=0$.
Note that $(\beta^1(t))_{t \ge 0}$ is a Brownian motion and $(\beta^0(t))_{t \ge 0}$ a horizontal Brownian motion.
From \cite[Theorem 4.1.21]{Baudoin2022} we have
\[
\beta^\mu(t)= \frac{e^{i\theta(t)} }{\sqrt{1+|w(t)|^2}} \left( w(t),1\right)
\]
where:
\begin{enumerate}
\item $(w(t))_{t \ge 0}$ is a Brownian motion (in local affine coordinates) on $\mathbb{C}P^n$ started from $w(0)$;
\item $\theta(t)=\mu B(t) - \frac{i}{2}\sum_{j=1}^n \int_0^t \frac{w_j(s) d\overline{w}_j(s)-\overline{w}_j(s) dw_j(s)}{1+|w(s)|^2},
 $ with $(B(t))_{t \ge 0}$ one-dimensional Brownian motion started from 0, which is independent of $(w(t))_{t \ge 0}$.
\end{enumerate}
We will denote $r(t)=\arctan |w(t)|$. 
We recall that, as pointed out in Subsection \ref{BM CPn}, the process $(r(t))_{t\geq 0}$ is a Jacobi diffusion with generator $\mathcal{L}^{n-1,0}$.

\begin{proposition}\label{cor:characteristic_function_winding_number S2n+1}
For $\lambda \in \mathbb{R}$, $r\in \left[0,\frac{1}{2}\pi \right)$ and $t>0$
\begin{align*}
     \mathbb{E} (e^{i\lambda \theta(t)}\mid r(t)=r)=e^{-|\lambda| (n+\frac{1}{2}|\lambda|\mu^2 )t}\left(\frac{\cos (r(0))}{\cos (r)}\right)^{|\lambda| } \frac{q_t^{n-1,|\lambda |} (r(0),r)}{q_t^{n-1,0} (r(0),r)} .
\end{align*}
\end{proposition}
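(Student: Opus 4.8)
The plan is to mimic the proof of Proposition~\ref{prop:Fourier(t)ransform_CP}, handling the two contributions to $\theta(t)$ separately. Write $\theta(t) = \mu B(t) + A(t)$, where
\[
A(t) = -\frac{i}{2}\sum_{j=1}^n \int_0^t \frac{w_j(s)\,d\overline{w}_j(s) - \overline{w}_j(s)\,dw_j(s)}{1+|w(s)|^2}
\]
is the stochastic area functional. Since $(B(t))_{t\ge 0}$ is a one-dimensional Brownian motion independent of $(w(t))_{t\ge 0}$, hence of $(r(t))_{t\ge 0}$ and of $A(t)$, conditioning on $r(t)=r$ leaves $B(t)$ Gaussian with variance $t$, so that $\mathbb{E}(e^{i\lambda\mu B(t)}\mid r(t)=r) = e^{-\frac12\lambda^2\mu^2 t}$ factors out. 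It then remains to compute $\mathbb{E}(e^{i\lambda A(t)}\mid r(t)=r)$ and to match it to the remaining factor $e^{-|\lambda| n t}(\cos r(0)/\cos r)^{|\lambda|}\,q_t^{n-1,|\lambda|}/q_t^{n-1,0}$.

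The key structural input, which replaces the clean skew-product \eqref{ClockWinding} available on $\mathbb{C}P^1$, is that $A$ is a martingale whose bracket is a function of the radial process and which is orthogonal to the Brownian motion driving $r$. Using the Fubini--Study metric and the form of $\Delta_{\mathbb{C}P^n}$ in \eqref{eq-Laplacian-CPn1}, I would identify the martingale parts $M_j$ of the $w_j$, whose only nonvanishing brackets are $d\langle M_k,\overline{M}_j\rangle = 2(1+|w|^2)(\delta_{kj}+w_k\overline{w}_j)\,dt$, and deduce
\[
d\langle A\rangle = |w|^2\,dt = \tan^2(r)\,dt, \qquad d\langle A,\gamma\rangle = 0,
\]
where $\gamma$ is the Brownian motion in $dr = ((n-\tfrac12)\cot r - \tfrac12\tan r)\,dt + d\gamma$ coming from the generator $\mathcal{L}^{n-1,0}$. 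Orthogonality together with a bracket adapted to $r$ yields a skew-product representation $A(t) = \widetilde\beta\big(\int_0^t \tan^2(r(s))\,ds\big)$ with $\widetilde\beta$ a Brownian motion independent of $(r(t))_{t\ge 0}$; conditioning on the radial path then gives, for every bounded Borel $f$,
\[
\mathbb{E}\!\left(e^{i\lambda A(t)} f(r(t))\right) = \mathbb{E}\!\left(e^{-\frac{\lambda^2}{2}\int_0^t \tan^2(r(s))\,ds}\, f(r(t))\right).
\]

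From here the argument is the Yor transform exactly as in Proposition~\ref{prop:Fourier(t)ransform_CP}. Assuming $\lambda>0$ by symmetry, I would introduce the exponential martingale $D_t^\lambda = \exp\big(-\lambda\int_0^t\tan(r)\,d\gamma - \frac{\lambda^2}{2}\int_0^t\tan^2(r)\,ds\big)$ and evaluate it explicitly with It\^o's formula: from $d\log\cos(r) = -n\,dt - \tan(r)\,d\gamma$ one obtains
\[
D_t^\lambda = e^{n\lambda t}\left(\frac{\cos r(t)}{\cos r(0)}\right)^{\lambda} e^{-\frac{\lambda^2}{2}\int_0^t\tan^2(r)\,ds},
\]
so the Gaussian weight above rewrites as $D_t^\lambda\, e^{-n\lambda t}(\cos r(0)/\cos r(t))^{\lambda}$. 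Under $\mathbb{P}^\lambda = D_t^\lambda\,\mathbb{P}$, Girsanov's theorem makes $\gamma + \lambda\int\tan(r)\,ds$ a Brownian motion and turns $r$ into a Jacobi diffusion with generator $\mathcal{L}^{n-1,\lambda}$, hence with transition density $q_t^{n-1,\lambda}$. Substituting and using that $f$ is arbitrary identifies $\mathbb{E}(e^{i\lambda A(t)}\mid r(t)=r) = e^{-n\lambda t}(\cos r(0)/\cos r)^{\lambda}\, q_t^{n-1,\lambda}(r(0),r)/q_t^{n-1,0}(r(0),r)$; multiplying by $e^{-\frac12\lambda^2\mu^2 t}$ and replacing $\lambda$ by $|\lambda|$ yields the claimed formula.

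The main obstacle is the second step: on $\mathbb{C}P^1$ the separation of variables in the generator made the skew-product \eqref{ClockWinding} immediate, whereas for $n>1$ one must verify by hand, from the Fubini--Study structure, both that $A$ is a genuine martingale (no drift, which is where the totally geodesic fibers of the Hopf fibration enter) and that $d\langle A\rangle = \tan^2(r)\,dt$ with $d\langle A,\gamma\rangle = 0$. Some care is also needed at the boundary $r=\pi/2$ to justify that $D_t^\lambda$ is a true martingale, but the deterministic bound $D_t^\lambda \le e^{n\lambda t}(\cos r(0))^{-\lambda}$ settles this on finite time intervals.
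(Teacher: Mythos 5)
Your proposal is correct and follows essentially the same route as the paper: decompose $\theta(t)=\mu B(t)+A(t)$, factor out the Gaussian term $e^{-\frac{1}{2}\lambda^2\mu^2 t}$ by independence of $B$, and compute the conditional characteristic function of the stochastic area $A(t)$ by the Yor transform (skew-product representation plus the exponential martingale $D_t^\lambda$ and Girsanov, turning $r$ into a $\mathcal{L}^{n-1,\lambda}$ Jacobi diffusion). The only difference is that the paper delegates this last computation to \cite[Theorem 4.1.17]{Baudoin2022} (adapted to $r(0)\neq 0$), whereas you carry it out explicitly; your identities $d\langle A\rangle=\tan^2(r)\,dt$, $d\langle A,\gamma\rangle=0$ and $d\log\cos(r)=-n\,dt-\tan(r)\,d\gamma$ all check out.
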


\begin{proof}
It follows, after adapting the proof of \cite[Theorem 4.1.17]{Baudoin2022} to the case $r(0)\neq 0$, that
\begin{align*}
    \mathbb{E} \left(e^{ \frac{\lambda}{2}\sum_{j=1}^n \int_0^t \frac{w_j(s) d\overline{w}_j(s)-\overline{w}_j(s) dw_j(s)}{1+|w(s)|^2}}\mid r(t)=r\right) =e^{-|\lambda|nt}\left(\frac{\cos (r(0))}{\cos (r)}\right)^{|\lambda| }\frac{q_t^{n-1,|\lambda |} (r(0),r)}{q_t^{n-1,0} (r(0),r)} .
\end{align*}
Since $\theta(t)=\mu B(t) - \frac{i}{2}\sum_{j=1}^n \int_0^t \frac{w_j(s) d\overline{w}_j(s)-\overline{w}_j(s) dw_j(s)}{1+|w(s)|^2},$ where $(B(t))_{t \ge 0}$ is a one-dimensional Brownian motion started from 0, which is independent of $(w(t))_{t \ge 0}$, the conclusion readily follows.
\end{proof}

\begin{proposition}
    Let $r \in [0,\pi/2)$.
    The conditional law $\mathbb{P}_{\theta (t) \mid r(t)=r}$ of $\theta (t)$ given $r(t)=r$ is given by
    \[
    d\mathbb{P}_{\theta(t) \mid r(t)=r}(\theta)= f_{r,t} (\theta) d\theta,
    \]
    where 
    \begin{align}\label{fr S2n+1}
    f_{r,t}(\theta) :=\frac{1}{2\pi} \int_{\mathbb{R}}e^{-i\lambda \theta-|\lambda| (n+\frac{1}{2}|\lambda|\mu^2 )t}\left(\frac{\cos(r(0))}{\cos(r)}\right)^{|\lambda|}  \frac{q_t^{n-1,|\lambda |} (r(0),r)}{q_t^{n-1,0} (r(0),r)} d\lambda
    \end{align}
    is a smooth function on $\mathbb R$ that satisfies
    \begin{align}\label{convo f to h}
    f_{r,t}(\theta)=\int_{-\infty}^{+\infty}\frac{e^{-\frac{(\theta-\eta)^2}{2\mu^2 t}}}{\sqrt{2\pi t} \mu} h_{r,t}(\eta) d\eta
    \end{align}
    with
    \begin{small}
    \begin{align*}
    h_{r,t}(\theta) := \frac{2 \sin (r)^{2n-1} \cos(r)}{\pi q_t^{n-1,0} (0,r)} \sum_{m=0}^{+\infty} e^{-2m(m+n)t} \left.Q_m\left(-\frac{1}{\phi_{m,r,r(0)}(t)} \frac{d}{ds}, \cos(2r), \cos(2r(0))\right) \right|_{s=1} \quad \frac{\phi_{m,r,r(0)}(t)s}{\phi_{m,r,r(0)}(t)^2s^2+\theta^2},
    \end{align*}
    \end{small}
    where $Q_m$ is a three-variable polynomial and $\phi_{m,r,r(0)}(t) :=(2m+n)t-\ln (\cos (r)\cos(r(0))$.
    In particular, for some $C(r,t)>0$,
    \begin{align}\label{S2n+1: fr}
    f_{r,t} (\theta) \sim_{|\theta| \to +\infty} \frac{C(r,t)}{\theta^2}.
    \end{align}
    
\end{proposition}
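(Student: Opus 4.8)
The plan is to follow the same scheme as in the $\mathbb{C}P^1$ case, the new ingredient being the Gaussian contribution of the Berger parameter $\mu$. Formula \eqref{fr S2n+1} is simply the Fourier inversion of the conditional characteristic function of Proposition \ref{cor:characteristic_function_winding_number S2n+1}, and it produces a smooth function: writing $\phi_{m,r,r(0)}(t)=(2m+n)t-\ln(\cos r\cos r(0))\ge nt>0$, the integrand decays in $\lambda$ at least like $e^{-\frac12\mu^2\lambda^2t}$ when $\mu>0$ and like $e^{-nt|\lambda|}$ when $\mu=0$, so $\lambda^k$ times the integrand is integrable for every $k$ and $f_{r,t}\in C^\infty(\mathbb R)$. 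The convolution identity \eqref{convo f to h} then follows at once from the decomposition of $\theta(t)$ recalled before Proposition \ref{cor:characteristic_function_winding_number S2n+1}: since $\theta(t)=\mu B(t)-\frac i2\sum_{j=1}^n\int_0^t\frac{w_j\,d\overline w_j-\overline w_j\,dw_j}{1+|w|^2}$ with $(B(t))$ an independent one-dimensional Brownian motion, conditioning on $r(t)=r$ leaves $\mu B(t)\sim\mathcal N(0,\mu^2t)$ independent, so the conditional law of $\theta(t)$ is the convolution of this Gaussian with the conditional law $h_{r,t}$ of the winding integral alone (the $\mu=0$ case). Equivalently, one factors $e^{-|\lambda|(n+\frac12|\lambda|\mu^2)t}=e^{-\frac12\mu^2\lambda^2t}\,e^{-|\lambda|nt}$ and recognizes $e^{-\frac12\mu^2\lambda^2t}$ as the Fourier transform of the stated Gaussian density.

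To obtain the explicit formula for $h_{r,t}$ I would insert the Jacobi heat kernel \eqref{jacobi heat kernel} into $q_t^{n-1,|\lambda|}(r(0),r)$. Combining the resulting powers of $\cos r$ and $\cos r(0)$ with the prefactor $(\cos r(0)/\cos r)^{|\lambda|}$ and regrouping the exponentials shows that the whole $\lambda$-dependence of the $m$-th term factors as a coefficient times $e^{-|\lambda|\phi_{m,r,r(0)}(t)}$. That coefficient, the product of $(2m+n+|\lambda|)$, the Gamma ratio $\Gamma(m+n+|\lambda|)\Gamma(m+1)/(\Gamma(m+n)\Gamma(m+|\lambda|+1))$ and $P_m^{n-1,|\lambda|}(\cos 2r(0))P_m^{n-1,|\lambda|}(\cos 2r)$, is a polynomial in $|\lambda|$: the Gamma ratio reduces to $\prod_{j=1}^{n-1}(m+j+|\lambda|)$ since $n\in\mathbb N$, and a Jacobi polynomial depends polynomially on its second parameter. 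This defines the three-variable polynomial $Q_m$, with $\cos 2r$ and $\cos 2r(0)$ entering as its last two arguments. To invert term by term I would use $\big(-\frac1{\phi}\frac{d}{ds}\big)^k e^{-|\lambda|\phi s}=|\lambda|^k e^{-|\lambda|\phi s}$, which lets me write $Q_m(|\lambda|,\cos 2r,\cos 2r(0))\,e^{-|\lambda|\phi}$ as $Q_m(-\frac1\phi\frac{d}{ds},\cos 2r,\cos 2r(0))\,e^{-|\lambda|\phi s}$ evaluated at $s=1$; since the inverse Fourier transform of $e^{-|\lambda|\phi s}$ is the Cauchy kernel $\frac1\pi\frac{\phi s}{\phi^2s^2+\theta^2}$, pulling the $s$-derivatives out of the $\lambda$-integral reproduces the stated expression for $h_{r,t}$. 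The super-exponential factor $e^{-2m(m+n)t}$ dominates the polynomial-in-$m$ growth of $Q_m$ and of the Jacobi polynomials and thus justifies interchanging the sum, the $\lambda$-integral and the $s$-differentiation.

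Finally, for \eqref{S2n+1: fr} I would read off from the explicit formula that each summand of $h_{r,t}$ is a finite linear combination of $s$-derivatives of the kernel $\frac{\phi s}{\phi^2s^2+\theta^2}$ at $s=1$, each of which is $O(\theta^{-2})$ as $|\theta|\to\infty$, the genuine $\theta^{-2}$ behaviour coming from the undifferentiated kernel (the lowest-order part of $Q_m$); summing over $m$, again controlled by $e^{-2m(m+n)t}$, gives $h_{r,t}(\theta)\sim C'(r,t)/\theta^2$. It then remains to transfer this tail through the Gaussian convolution \eqref{convo f to h}. Since the Gaussian has moments of all orders while $h_{r,t}$ carries only a heavy $\theta^{-2}$ tail, splitting the convolution integral around $\eta\approx\theta$ and using the rapid decay of the Gaussian away from that point shows that the tail localizes there, whence $f_{r,t}(\theta)\sim h_{r,t}(\theta)\sim C(r,t)/\theta^2$. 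I expect this transfer of the $\theta^{-2}$ tail across the Gaussian convolution, together with the uniform-in-$m$ bounds justifying the term-by-term inversion and differentiation, to be the main obstacle; everything else runs parallel to the $\mathbb{C}P^1$ computation.
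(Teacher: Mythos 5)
Your proposal is correct and follows essentially the same route as the paper: Fourier inversion of the conditional characteristic function, expansion of $q_t^{n-1,|\lambda|}$ via the Jacobi heat kernel so that the $\lambda$-dependence of the $m$-th term factors as a polynomial $Q_m(|\lambda|,\cdot,\cdot)$ times $e^{-\frac12\mu^2\lambda^2 t}e^{-|\lambda|\phi_{m,r,r(0)}(t)}$, the substitution $|\lambda|^k e^{-|\lambda|\phi s}=\bigl(-\frac1\phi\frac{d}{ds}\bigr)^k e^{-|\lambda|\phi s}$ to reduce everything to Cauchy kernels, and the Gaussian factor accounting for the convolution \eqref{convo f to h}. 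Your extra care with the probabilistic interpretation of the convolution (independence of $\mu B(t)$) and with transferring the $\theta^{-2}$ tail across the Gaussian convolution only fills in details the paper leaves implicit.
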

\begin{remark}
For $\mu=0$, we understand equation \eqref{convo f to h} as $f_{r,t}=h_{r,t}$.
    \end{remark}

\begin{proof}
The formula \eqref{fr S2n+1} simply follows from Proposition \ref{cor:characteristic_function_winding_number S2n+1} by the Fourier inversion formula. We then observe that
\begin{align*}
&  e^{-|\lambda| (n+\frac{1}{2}|\lambda|\mu^2 )t} \left(\frac{\cos(r(0))}{\cos (r)}\right)^{|\lambda|}\frac{q_t^{n-1,|\lambda |} (r(0),r)}{q_t^{n-1,0} (r(0),r)} \\
=&2 (\sin (r))^{2n-1} (\cos(r))^{|\lambda|+1} (\cos(r(0)))^{|\lambda|} \frac{e^{-|\lambda| (n+\frac{1}{2}|\lambda|\mu^2 )t}}{q_t^{n-1,0} (r(0),r)} \\
 & \sum_{m=0}^{+\infty} (2m+n+|\lambda|)\frac{\Gamma(m+n+|\lambda|)\Gamma(m+1)}{\Gamma(m+n)\Gamma(m+|\lambda|+1)}  
  e^{-2m(m+n+|\lambda|)t} P_m^{n-1,|\lambda|}(\cos(2r(0)))P_m^{n-1,|\lambda|}(\cos (2r)) \\
 =& 2 (\sin (r))^{2n-1} (\cos(r))^{|\lambda|+1} (\cos(r(0)))^{|\lambda|}\frac{e^{-|\lambda| (n+\frac{1}{2}|\lambda|\mu^2 )t}}{q_t^{n-1,0} (r(0),r)} \sum_{m=0}^{+\infty} e^{-2m(m+n+|\lambda|)t} Q_m( |\lambda|, \cos(2r),\cos(2r(0)))  \\
 =& \frac{2 (\sin (r))^{2n-1} \cos(r)}{q_t^{n-1,0} (r(0),r)} e^{-\frac{1}{2} \mu^2 \lambda^2 t}  \sum_{m=0}^{+\infty} e^{-2m(m+n)t} e^{-\phi_{m,r,r(0)}(t) |\lambda |} Q_m( |\lambda|, \cos(2r),\cos(2r(0))) \\
 =& \frac{2 (\sin (r))^{2n-1} \cos(r)}{q_t^{n-1,0} (r(0),r)} e^{-\frac{1}{2} \mu^2 \lambda^2 t} \sum_{m=0}^{+\infty} e^{-2m(m+n)t} \left.Q_m\left(-\frac{1}{\phi_{m,r,r(0)}(t)} \frac{d}{ds}, \cos(2r),\cos(2r(0))\right) \right|_{s=1}  e^{-\phi_{m,r,r(0)}(t)|\lambda |s} 
\end{align*}
where $Q_m$ is a three-variable polynomial which has degree $n+2m$ in the first variable. The asymptotics \eqref{S2n+1: fr} easily follows.
\end{proof}

\begin{theorem}\label{law winding S2n+1}
  
For $k \in \mathbb Z$, $t>0$, $\beta = e^{i \theta}\left(\frac{w}{\sqrt{1+|w|^2}},\frac{1}{\sqrt{1+|w|^2}}\right) \in \mathbb{S}^{2n+1}$, $w\in \mathbb{C}P^n$, $\theta \in [0,2\pi)$, and $r=\arctan(|w|)$ we have
\[
\mathbb{P}\left(  \theta(t) =\theta +2\pi k \mid \beta^\mu(t)= \beta\right)= \, \frac{f_{r,t}(\theta +2\pi k)}{\sum_{m \in \mathbb{Z}} f_{r,t}(\theta +2\pi m)},
\]
and therefore when $|k|\to +\infty$ 
\[
\mathbb{P}\left(  \theta(t) =\theta +2\pi k \mid \beta^\mu(t)= \beta\right) \sim_{|k| \to +\infty} \frac{C(\beta,t)}{k^2}
\]
for some $C(\beta,t) >0$. %In particular, the law of the index of the Brownian loop of length $t$ is given by

\end{theorem}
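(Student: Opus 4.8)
The plan is to reduce the statement to the disintegration already encoded in the conditional density $f_{r,t}$ from the preceding Proposition, by exploiting that conditioning on $\beta^\mu(t)=\beta$ pins down simultaneously the base point in $\mathbb{C}P^n$ and the fiber coordinate modulo $2\pi$.

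First I would observe that, in the coordinates $(w,\theta)$ adapted to the Hopf fibration, the event $\{\beta^\mu(t)=\beta\}$ with $\beta=(w,\theta)$ is exactly $\{w(t)=w\}\cap\{\theta(t)\in\theta+2\pi\mathbb{Z}\}$, since $\theta(t)\in\mathbb{R}$ is the continuous lift of the fiber coordinate and the fiber $\mathbf{U}(1)$ is identified with $\mathbb{R}/2\pi\mathbb{Z}$. Writing $\Pi_t(w,\vartheta)$ for the joint density of $(w(t),\theta(t))$ on $\mathbb{C}^n\times\mathbb{R}$ (the heat kernel of the $\mathbb{R}$-cover of the fibration), the endpoint density on $\mathbb{S}^{2n+1}$ is recovered by summing over the fiber, so that for every $k\in\mathbb{Z}$
\[
\mathbb{P}\big(\theta(t)=\theta+2\pi k\mid \beta^\mu(t)=\beta\big)=\frac{\Pi_t(w,\theta+2\pi k)}{\sum_{m\in\mathbb{Z}}\Pi_t(w,\theta+2\pi m)} .
\]
Factoring $\Pi_t(w,\vartheta)=\rho_t(w)\,g_{w,t}(\vartheta)$, where $\rho_t$ is the marginal density of $w(t)$ and $g_{w,t}$ the conditional density of $\theta(t)$ given $w(t)=w$, the factor $\rho_t(w)$ is common to all lifts and cancels in the quotient.

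The key step is then to identify $g_{w,t}$ with $f_{r,t}$, $r=\arctan|w|$. I would argue that the conditional law of $\theta(t)$ given $w(t)=w$ depends on $w$ only through $r$, and hence coincides with the conditional law of $\theta(t)$ given $r(t)=r$, which is precisely $f_{r,t}$. The invariance input is the isometric $\mathbf{U}(n)$-action fixing the origin of $\mathbb{C}P^n$: the Fubini--Study area form, hence the stochastic-area part of $\theta(t)$, together with the independent Gaussian part $\mu B(t)$, are invariant under $w\mapsto gw$, while $\mathbf{U}(n)$ acts transitively on the geodesic spheres $\{|w|=\mathrm{const}\}$. Substituting $g_{w,t}=f_{r,t}$ yields the asserted formula
\[
\mathbb{P}\big(\theta(t)=\theta+2\pi k\mid \beta^\mu(t)=\beta\big)=\frac{f_{r,t}(\theta+2\pi k)}{\sum_{m\in\mathbb{Z}}f_{r,t}(\theta+2\pi m)} .
\]
I expect this identification to be the genuine obstacle: one must ensure that conditioning on the full endpoint $w(t)=w$ really reduces to the purely radial conditioning $r(t)=r$ — equivalently, that $\theta(t)$ is conditionally independent of the angular part of $w(t)$ given $r(t)$ — which is exactly where the homogeneity of the Hopf fibration (and, where relevant, the reduction to the base point $w(0)=0$, so that the only surviving $\mathbf{U}(n)$-invariant of the endpoint is $|w|$) has to be invoked carefully.

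Finally, the tail asymptotics follow at once from \eqref{S2n+1: fr}: as $|k|\to+\infty$ the numerator satisfies $f_{r,t}(\theta+2\pi k)\sim C(r,t)/(\theta+2\pi k)^2\sim C(r,t)/(2\pi k)^2$, while the denominator $\sum_{m\in\mathbb{Z}}f_{r,t}(\theta+2\pi m)$ is a fixed finite constant, positive since $f_{r,t}$ is a probability density and convergent precisely because $f_{r,t}(\vartheta)=O(\vartheta^{-2})$. Dividing gives $\mathbb{P}\big(\theta(t)=\theta+2\pi k\mid \beta^\mu(t)=\beta\big)\sim C(\beta,t)/k^2$ for a suitable $C(\beta,t)>0$, as claimed.
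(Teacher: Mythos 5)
This is essentially the paper's argument: the proof there consists of applying Yor's Lemma~6.1 to the random variable $Z=e^{i\theta(t)}/\sqrt{1+|w(t)|^2}$ --- which is precisely your fiber disintegration with the conditional density $f_{r,t}$ of $\theta(t)$ given $r(t)=r$ --- together with the asserted identity $\mathbb{P}\left(\,\cdot\mid\beta^\mu(t)=\beta\right)=\mathbb{P}\left(\,\cdot\mid Z=e^{i\theta}/\sqrt{1+|w|^2}\right)$, which is exactly your identification $g_{w,t}=f_{r,t}$, and the tail asymptotics are then read off from the quadratic decay \eqref{S2n+1: fr} just as you do. The only divergence is that the paper states this reduction without justification, whereas you attempt one via the $\mathbf{U}(n)$-action; as written that argument is incomplete when $w(0)\neq 0$, since only the stabilizer of $w(0)$ preserves the law of the process and that stabilizer does not act transitively on the spheres $\{|w|=\mathrm{const}\}$ --- but you correctly isolate this conditional-independence step as the crux, and it is precisely the step the paper itself passes over.
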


\begin{proof}
This follows immediately from Lemma 6.1 in \cite{Yor1980LoiDL} applied to the random variable $Z=\frac{e^{i\theta(t)}}{\sqrt{1+|w(t)|^2}}$ because
\[
\mathbb{P}\left(  \theta(t) =\theta +2\pi k \mid \beta^\mu(t)= \beta\right)=\mathbb{P}\left(  \theta(t) =\theta +2\pi k \mid Z= \frac{e^{i\theta}}{\sqrt{1+|w|^2}}\right) .
\]
The asymptotics when $n \to +\infty$ follow from Proposition \ref{cor:characteristic_function_winding_number S2n+1}.
\end{proof}

\begin{corollary}\label{characteristic function bridge S2n+1}
  Let  $\beta = e^{i \theta}\left(\frac{w}{\sqrt{1+|w|^2}},\frac{1}{\sqrt{1+|w|^2}}\right) \in \mathbb{S}^{2n+1}$, $w\in \mathbb{C}P^n$ and $\theta \in [0,2\pi)$.
  We have
  \[
  \mathbb{E} \left( e^{i \lambda \theta(t)}\mid \beta^\mu(t)= \beta \right)=\frac{\sum_{k \in \mathbb Z} e^{-ik\theta} e^{-|\lambda+k| (n+\frac{1}{2}|\lambda+k|\mu^2 )t}\frac{\cos (r(0))^{|\lambda+k| }}{\cos (r)^{|\lambda+k| }}q_t^{n-1,|\lambda+k|} (r(0),r)}{\sum_{k \in \mathbb Z} e^{-ik\theta} e^{-|k| (n+\frac{1}{2}|k|\mu^2 )t}\frac{\cos (r(0))^{|k| }}{\cos (r)^{|k| }}q_t^{n-1,|k |} (r(0),r)} ,
  \]
  for every $\lambda \in \mathbb{R}$.
\end{corollary}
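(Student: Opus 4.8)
The plan is to mirror, essentially verbatim, the argument used for Corollary \ref{characteristic function bridge CP1}, transporting it to the Hopf-fibration setting. The starting point is Theorem \ref{law winding S2n+1}, which identifies the conditional law of $\theta(t)$ given $\beta^\mu(t)=\beta$ with the normalized sequence $f_{r,t}(\theta+2\pi k)/\sum_m f_{r,t}(\theta+2\pi m)$. Since, conditionally on $\beta^\mu(t)=\beta = e^{i\theta}(\cdots)$, the variable $\theta(t)$ is supported on $\theta+2\pi\mathbb{Z}$, the conditional characteristic function is
\[
\mathbb{E}\left(e^{i\lambda\theta(t)}\mid \beta^\mu(t)=\beta\right)=\frac{\sum_{k\in\mathbb{Z}}e^{i\lambda(\theta+2\pi k)}f_{r,t}(\theta+2\pi k)}{\sum_{m\in\mathbb{Z}}f_{r,t}(\theta+2\pi m)}.
\]
Both numerator and denominator are periodizations of $f_{r,t}$, of exactly the type handled by Poisson summation.

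I would then apply the Poisson summation formula \cite[Theorem 3.2.8]{Grafakos} just as in the $\mathbb{C}P^1$ case. Writing $\varphi(\lambda):=\mathbb{E}(e^{i\lambda\theta(t)}\mid r(t)=r)$, so that $f_{r,t}$ is its inverse Fourier transform by \eqref{fr S2n+1}, and performing the change of variables $u=\theta+2\pi\xi$ inside the Poisson formula gives
\[
\sum_{k\in\mathbb{Z}}f_{r,t}(\theta+2\pi k)e^{2\pi i\lambda k}=\frac{1}{2\pi}\sum_{k\in\mathbb{Z}}e^{-i(\lambda+k)\theta}\varphi(\lambda+k).
\]
Multiplying by $e^{i\lambda\theta}$, the numerator becomes $\frac{1}{2\pi}\sum_{k\in\mathbb{Z}}e^{-ik\theta}\varphi(\lambda+k)$, while the denominator is precisely the same expression evaluated at $\lambda=0$, namely $\frac{1}{2\pi}\sum_{k\in\mathbb{Z}}e^{-ik\theta}\varphi(k)$.

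It then remains to substitute the explicit symbol from Proposition \ref{cor:characteristic_function_winding_number S2n+1},
\[
\varphi(\lambda+k)=e^{-|\lambda+k|(n+\frac{1}{2}|\lambda+k|\mu^2)t}\left(\frac{\cos(r(0))}{\cos(r)}\right)^{|\lambda+k|}\frac{q_t^{n-1,|\lambda+k|}(r(0),r)}{q_t^{n-1,0}(r(0),r)},
\]
into both sums. The prefactor $\frac{1}{2\pi}$ and the $\lambda$-independent factor $q_t^{n-1,0}(r(0),r)^{-1}$ cancel between numerator and denominator, leaving exactly the claimed ratio. The only genuinely delicate point is the justification of the Poisson summation step — checking that the tails of $f_{r,t}$ and of $\varphi$ satisfy the hypotheses of \cite[Theorem 3.2.8]{Grafakos} — but this is handled as in Corollary \ref{characteristic function bridge CP1}: the quadratic decay \eqref{S2n+1: fr} controls the physical side, while $\varphi$ manifestly decays on the frequency side (Gaussian when $\mu>0$, and exponentially like $e^{-|\lambda|nt}$ when $\mu=0$, since $n\geq 1$), guaranteeing absolute convergence of both series and validity of the summation formula.
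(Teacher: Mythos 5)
Your proposal is correct and follows essentially the same route as the paper: the conditional law from Theorem \ref{law winding S2n+1}, Poisson summation applied to the periodization of $f_{r,t}$, and substitution of the explicit conditional characteristic function from Proposition \ref{cor:characteristic_function_winding_number S2n+1}, with the common factors $\tfrac{1}{2\pi}$ and $q_t^{n-1,0}(r(0),r)^{-1}$ cancelling in the ratio. Your added remarks on the decay hypotheses needed for Poisson summation are consistent with (and slightly more explicit than) what the paper records.
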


\begin{proof}
The proof is similar to that of Corollary \ref{characteristic function bridge CP1}.  By appealing to the Poisson summation formula and Proposition \ref{cor:characteristic_function_winding_number S2n+1} we get
 \begin{align*}
 \sum_{k\in\mathbb{Z}} f_{r,t}( \theta +2\pi k)e^{2i\pi k \lambda }&=\frac{1}{2\pi} \sum_{k \in \mathbb Z}  \mathbb{E} \left( e^{i(\lambda +k)(\theta(t)-\theta)} \mid r(t)=r \right) \\
 &=\frac{1}{2\pi} \sum_{k \in \mathbb Z} e^{-i(\lambda +k)\theta} \mathbb{E} \left( e^{i(\lambda +k)\theta(t)} \mid r(t)=r \right) \\
 &=\frac{1}{2\pi} \sum_{k \in \mathbb Z} e^{-i(\lambda +k)\theta} e^{-|\lambda+k| (n+\frac{1}{2}|\lambda+k|\mu^2 )t}\frac{\cos (r(0))^{|\lambda+k| }}{\cos (r)^{|\lambda+k| }}\frac{q_t^{n-1,|\lambda+k |} (r(0),r)}{q_t^{n-1,0} (r(0),r)},
 \end{align*}
 and the conclusion follows.
\end{proof}

\begin{corollary}
  Let  $\beta = e^{i \theta}\left(\frac{w}{\sqrt{1+|w|^2}},\frac{1}{\sqrt{1+|w|^2}}\right) \in \mathbb{S}^{2n+1}$, $w\in \mathbb{C}P^n$ and $\theta \in [0,2\pi)$.
  We have
  \[
 \lim_{t\to +\infty} \mathbb{E} \left( e^{i \lambda \frac{\theta(t)}{t}}\mid \beta^\mu(t)= \beta \right)=e^{-n |\lambda |t}.
  \]
  Therefore, conditionally to $\beta^\mu(t)= \beta$, when $t\to +\infty$, one has  
\[
\frac{\theta(t)}{t} \to C_n
\]
where $C_n$ is a Cauchy distribution with parameter $n$.
\end{corollary}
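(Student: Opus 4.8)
The plan is to start from the closed-form conditional characteristic function of Corollary \ref{characteristic function bridge S2n+1} and replace $\lambda$ by $\lambda/t$, which yields the characteristic function of $\theta(t)/t$ as a ratio of two series:
\[
\mathbb{E}\left(e^{i\lambda\frac{\theta(t)}{t}}\mid\beta^\mu(t)=\beta\right)=\frac{N_t(\lambda)}{N_t(0)},\qquad N_t(\lambda):=\sum_{k\in\mathbb Z}e^{-ik\theta}\,a_k^t(\lambda),
\]
where
\[
a_k^t(\lambda):=e^{-|\frac{\lambda}{t}+k|\left(n+\frac{1}{2}|\frac{\lambda}{t}+k|\mu^2\right)t}\left(\frac{\cos(r(0))}{\cos(r)}\right)^{|\frac{\lambda}{t}+k|}q_t^{n-1,|\frac{\lambda}{t}+k|}(r(0),r).
\]
I would then argue that as $t\to+\infty$ only the $k=0$ term survives in both numerator and denominator, and compute its limit explicitly.

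The key analytic input is the large-time behaviour of the Jacobi kernel from \eqref{jacobi heat kernel}: since each summand there carries the factor $e^{-2m(m+\alpha+\beta+1)t}$, only the $m=0$ term persists as $t\to+\infty$, and using $P_0^{\alpha,\beta}\equiv 1$ together with $(\alpha+\beta+1)\Gamma(\alpha+\beta+1)=\Gamma(\alpha+\beta+2)$ one obtains
\[
q_t^{n-1,\beta}(r(0),r)\xrightarrow[t\to+\infty]{}2\,\frac{\Gamma(n+\beta+1)}{\Gamma(n)\Gamma(\beta+1)}\,(\cos r)^{2\beta+1}(\sin r)^{2n-1},
\]
the value of the stationary density. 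For $k\neq 0$ one has $|\tfrac{\lambda}{t}+k|\to|k|>0$, so the exponential prefactor $e^{-|\frac{\lambda}{t}+k|nt}$ forces $a_k^t(\lambda)\to 0$; hence only $k=0$ contributes in the limit. For $k=0$ I would track the three factors separately: the exponent equals $-|\lambda|n-\tfrac{|\lambda|^2\mu^2}{2t}\to -n|\lambda|$, the ratio $(\cos r(0)/\cos r)^{|\lambda|/t}\to 1$, and, with $\beta=|\lambda|/t\to 0$, the quotient $q_t^{n-1,|\lambda|/t}(r(0),r)/q_t^{n-1,0}(r(0),r)\to 1$ by the stationary limit above. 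Dividing the $k=0$ numerator term by the $k=0$ denominator term (which is exactly $q_t^{n-1,0}(r(0),r)$) then gives $e^{-n|\lambda|}$, the characteristic function of a Cauchy law with parameter $n$.

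The main obstacle is rigorously justifying the termwise passage to the limit in the series over $k$ (and in the summation over $m$ inside each $q_t$). I would handle this by dominated convergence: for $t$ large and $k\neq 0$ one has $|\tfrac{\lambda}{t}+k|\geq\tfrac12|k|$, so $e^{-|\frac{\lambda}{t}+k|nt}\leq e^{-\frac12 n|k|t}$, while the remaining factors $(\cos r(0)/\cos r)^{|\frac{\lambda}{t}+k|}$ and $q_t^{n-1,|\frac{\lambda}{t}+k|}(r(0),r)$ stay bounded uniformly in $t$ on compact $\lambda$-sets (again by \eqref{jacobi heat kernel}, the $m=0$ term dominating and the factor $(\cos r)^{2\beta+1}$ decaying as the order $\beta$ grows). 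This produces a summable, $t$-independent bound on $|a_k^t(\lambda)|$ for $k\neq 0$ and legitimizes exchanging limit and sum, so that $N_t(\lambda)/N_t(0)\to e^{-n|\lambda|}$. Finally, since $\lambda\mapsto e^{-n|\lambda|}$ is continuous at the origin, L\'evy's continuity theorem upgrades this pointwise convergence of characteristic functions to convergence in distribution of $\theta(t)/t$ to the Cauchy distribution $C_n$ with parameter $n$, which is the assertion.
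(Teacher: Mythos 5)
Your proposal is correct and follows the same route as the paper, whose proof is simply the one-line remark that the result is an application of Corollary \ref{characteristic function bridge S2n+1}; you substitute $\lambda/t$, isolate the $k=0$ term, and justify the termwise limit by domination exactly as intended. Note that your computation also confirms that the displayed limit in the statement should read $e^{-n|\lambda|}$ rather than $e^{-n|\lambda|t}$, consistent with the Cauchy$(n)$ conclusion.
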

\begin{proof}
The proof is a simple application of Corollary \ref{characteristic function bridge S2n+1}.
\end{proof}

\section{Windings  on the anti-de Sitter fibration}

We now turn to the second part of the paper which concerns windings related to the anti-de Sitter fibration.

\subsection{Preliminaries}

\subsubsection{Hyperbolic Jacobi diffusions}

A hyperbolic Jacobi process is a diffusion on $[0,+\infty)$ with generator:
\begin{align*}
    \mathcal{L}^{\alpha ,\beta } :=\frac{1}{2}\frac{\partial^2}{\partial r^2}+\left(\left(\alpha +\frac{1}{2}\right)\coth (r) +\left(\beta +\frac{1}{2}\right) \tanh (r)\right)\frac{\partial}{\partial r} 
\end{align*}
for $\alpha >-1$ and $\beta\in\mathbb{R}$.
As follows from \cite[Theorem 2.3]{Koornwinder1984}, the transition density $q_t^{\alpha,\beta}(r(0),\cdot)$ with respect to the Lebesgue measure of a diffusion with hyperbolic Jacobi operator $\mathcal{L}^{\alpha ,\beta }$ is given by
    \begin{align}\label{jacobi hyperbolic heat kernel}
        q_t^{\alpha,\beta} (r(0),r)=\frac{(\sinh (r))^{2\alpha +1}(\cosh (r))^{2\beta +1}}{\pi }\int_0^{\infty}e^{-\frac{1}{2}(p^2+(\alpha +\beta +1)^2) t}\Phi_p^{\alpha ,\beta } (r(0)) \Phi_p^{\alpha ,\beta }(r) m_{\alpha ,\beta }(p) dp ,
    \end{align}
    where
    \begin{align*}
        \Phi_p^{\alpha ,\beta}(r) := _2 F_1 \left( \frac{\alpha +\beta +1+ip}{2} , \frac{\alpha +\beta +1-ip}{2}, \alpha +1; -\sinh^2(r) \right) ,
        \end{align*}
        
        \begin{align}\label{harish-chandra function}
        m_{\alpha ,\beta } (p):=&\left|\frac{\Gamma\left(\frac{\alpha +\beta +1+ip}{2}\right)\Gamma\left(\frac{\alpha -\beta +1+ip}{2}\right)}{\Gamma (1+\alpha )\Gamma (ip)}\right|^2 
    \end{align}
    and $_2 F_1$ is the Gauss hypergeometric function.
    If $p$ is non negative, the spherical function $\phi^{\alpha,\beta}_p(r)$ is the unique solution of
\[
\mathcal{L}^{\alpha,\beta}f=-\frac{1}{2}(p^2+(\alpha+\beta+1)^2) f
\]
such that $\phi^{\alpha,\beta}_p(0)=1$ and $(\phi_p^{\alpha,\beta})'(0)=0$.

\subsubsection{Brownian motion on \texorpdfstring{$\mathbb{C}H^n$}{CHn}}\label{prelim BM CHn}

As a differential manifold, the complex hyperbolic space  $\mathbb{C}H^n$ can be defined as the open unit ball in $\mathbb{C}^n$:
 \[
\mathbb{C}H^n=\{ z \in \mathbb{C}^{n} | \, | z_1|^2+\cdots+|z_n|^2 <1 \}.
 \]
The  Riemannian structure of  $\mathbb{C}H^n$ can be constructed as follows.
Let 
\[
\mathbf{AdS}^{2n+1}=\{ z \in \mathbb{C}^{n+1} | \, | z_1|^2+\cdots+|z_n|^2 -|z_{n+1}|^2=-1 \}
\]
be the $2n+1$ dimensional anti-de Sitter space. We equip $\mathbf{AdS}^{2n+1}$ with its standard Lorentz metric with signature $(2n,1)$ inherited from $\mathbb C^{n+1}$.
The Riemannian structure on $\mathbb{C}H^n$ is then such that the map
\begin{align*}
\begin{array}{llll}
\pi :& \mathbf{AdS}^{2n+1} & \to & \mathbb{C}H^n \\
 & (z_1,\dots,z_{n+1}) & \mapsto & \left( \frac{z_1}{z_{n+1}}, \dots, \frac{z_n}{z_{n+1}}\right)
\end{array}
\end{align*}
is an indefinite Riemannian submersion whose 1-dimensional fibers are negative definite.
This submersion is associated with a fibration.
Indeed, the group $\mathbf{U}(1)$ acts isometrically on $\mathbf{AdS}^{2n+1}$, and the quotient space of $\mathbf{AdS}^{2n+1}$ by this action is isometric to $\mathbb{C}H^n$.
The resulting fibration
\[
\mathbf{U}(1)\to \mathbf{AdS}^{2n+1}\to\mathbb{C}H^n
\]
 is called the complex anti-de Sitter fibration and the Riemannian metric on $\mathbb{C}H^n$ the Bergman metric. We refer to \cite[Section 4.2]{Baudoin2022} for further details.

To parametrize $\mathbb{C}H^n$, we will use the global affine coordinates given by $w_j=z_j/z_{n+1}$ where $z\in \{z\in \mathbb{C}^{n+1} | \sum_{k=1}^n|z_{k}|^2-|z_{n+1}|^2<0 \}$.
Those coordinates provide a diffeomorphism between $\mathbb{C}H^n$ and the open unit ball in $\mathbb C^n$.
In affine coordinates, the Laplace-Beltrami operator for the Bergman metric of $\mathbb{C}H^n$ is given by
\begin{align}\label{Laplacian CHn}
\Delta_{\mathbb{C}H^n}=4(1-|w|^2)\sum_{k=1}^n \frac{\partial^2}{\partial w_k \partial\overline{w}_k}- 4(1-|w|^2)\mathcal{R} \overline{\mathcal{R}}
\end{align}
where $|w|^2:=\sum_{i=1}^n |w_i|^2 <1$ and
\[
\mathcal{R}:=\sum_{j=1}^n w_j \frac{\partial}{\partial w_j} .
\] 
The Bergman metric $h$ on $\mathbb{C}H^n$ satisfies
\[
h \left(\frac{\partial}{\partial w_i}, \frac{\partial}{\partial \overline{w}_j} \right)= \frac{(1-|w|^2) \delta_{ij} +\overline{w}_i w_j}{(1-|w|^2)^2}.
\]
The Riemannian volume measure is given 
\[
\frac{dw}{(1-|w|^2)^{n+1}} ,
\]
where $dw$ denotes the Lebesgue measure on the open unit ball. 
The Riemannian distance on $\mathbb{C}H^n$ between two points of affine coordinates $w$ and $\tilde{w}$ will be denoted by $d(w,\tilde{w})$.
One can check that if a point has coordinate $w$, then
\[
d(0,w)=\arctanh (|w|).
\]

Brownian motion on $\mathbb{C}H^n$ is a diffusion with generator $\frac{1}{2}\Delta_{\mathbb{C}H^n}$.
The radial part of a Brownian motion on $\mathbb{C}H^n$ is a Jacobi diffusion.
More precisely, let us consider a Brownian motion $(w(t))_{t \ge 0}$ on $\mathbb{C}H^n$ issued from a point $w(0) $.
The process $r(t):=d(0,w(t))=\arctanh (|w(t)|)$ is a hyperbolic Jacobi diffusion with generator $\mathcal{L}^{n-1,0}$ from which it follows that  the density of $w(t)$, with respect to the Lebesgue measure, is given by the formula
\begin{align}\label{heat kernel CHn}
p_{\mathbb{C}P^n}(t,w(0), w)= \frac{\Gamma \left(n \right)}{2 \pi^{n}}  \frac{q_t^{n-1,0}(0,d(w(0),w))}{\cosh(d(w(0),w)) \sinh(d(w(0),w))^{2n-1} } \frac{1}{(1-|w|^2)^{n+1}}. 
\end{align}
at a point $w$ and a time $t>0$.
When $w=w(0)$ we of course understand formula \eqref{heat kernel CHn} with the help of formula \eqref{jacobi hyperbolic heat kernel}.

\subsection{Windings on \texorpdfstring{$\mathbb{C}H^1$}{CH1}}

In this section we will look at the winding number of the Brownian motion $(w(t))_{t \ge 0}$ on $\mathbb{C}H^1$ around zero.
We will assume that $(w(t))_{t \ge 0}$ is started from $w(0) \neq 0$.
The point $0$ is polar for the process $(w(t))_{t\ge 0}$ one can therefore write a polar decomposition
\[
w(t) =| w(t)| e^{i\theta(t)} ,
\]
where 
\[
\theta(t)=\theta(0)+\frac{1}{2i}\int_0^t \frac{ \overline{w}(s)dw(s)-d\overline{w}(s)w(s)}{|w(s)|^2} ,
\]
and $\theta(0)$ is such that $w(0)=|w(0)|e^{i\theta(0)}$.
Using rotational invariance we can assume $\theta(0)=0$.

Consequently, by the above and since we are working in conformal coordinates, it is natural to make the following definition:
\begin{definition}\index{winding!processCH1}
The winding process of the Brownian motion $(w(t))_{t\geq 0}$ on $\mathbb{C}H^1$ is defined by 
\begin{equation*}
\theta(t) := \frac{1}{2i}\int_0^t \frac{ \overline{w}(s)dw(s)-d\overline{w}(s)w(s)}{|w(s)|^2}\textrm{ for } t \geq 0 .
\end{equation*}
\end{definition}

From \eqref{Laplacian CHn} the generator of Brownian motion in $\mathbb{C}H^1$, in polar coordinates $(r,\theta )$ given by $w=\tanh (r)e^{i\theta}$, reads:
\[
\frac{1}{2}\frac{\partial^2}{\partial r^2} + \coth (2r) \frac{\partial}{\partial r} +\frac{2}{\sinh^2 (2r)} \frac{\partial^2}{\partial\theta^2}\textrm{ for } r\in\mathbb{R}\textrm{ and } \theta\in\mathbb{R},
\]
where $r$ parametrises the Riemannian distance from $0$ in $\mathbb{C}H^1$.
This shows that the winding process $(\theta(t))_{t\geq 0}$ of Brownian motion in $\mathbb{C}H^1$ is given by
\begin{align}\label{eq:clockWindingCH1}
\theta(t) =\beta\left( \int_0^t \frac{4ds}{\sinh^2 (2r_s)}\right)\textrm{ for } t \geq 0,
\end{align}
where $(r(t))_{t\geq 0}$ is the Jacobi diffusion started at $r(0)\in (0,\pi /2)$ with generator 
\[
\frac{1}{2} \frac{\partial^2}{\partial r^2} + \coth (2r) \frac{\partial}{\partial r}
\]
and $(\beta (t))_{t\geq 0}$ is a Brownian motion independent from $r$.

\begin{proposition}\label{prop:Fourier(t)ransform_CH}
For $t >0$ and $r>0$, the conditional Fourier transform of $(\theta(t))_{t\geq 0}$ given $r(t)$ is given by
\begin{align*}
\mathbb{E} \left( e^{i \lambda \theta(t)}  \mid r(t)=r\right)
& = \left(\frac{\tanh (r(0))}{\tanh (r)}\right)^{|\lambda |} \frac{q_t^{|\lambda |,-|\lambda |}(r(0),r)}{q_t^{0,0}(r(0),r)} .
\end{align*}
\end{proposition}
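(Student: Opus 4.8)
The plan is to mirror the Yor-transform argument of Proposition~\ref{prop:Fourier(t)ransform_CP}, adapted to the hyperbolic geometry, where the computation turns out to be slightly cleaner. I may assume $\lambda>0$: the conditional law of $\theta(t)$ given $r(t)$ is symmetric under $\theta\mapsto-\theta$, so its characteristic function is real and even, and I can replace $\lambda$ by $|\lambda|$ at the end. Using \eqref{eq:clockWindingCH1} and the independence of $(r(t))_{t\ge 0}$ and $(\beta(t))_{t\ge 0}$, for every bounded Borel $f$ one has
\[
\mathbb{E}\left(e^{i\lambda\theta(t)}f(r(t))\right)=\mathbb{E}\left(e^{-\frac{\lambda^2}{2}\int_0^t\frac{4\,ds}{\sinh^2(2r(s))}}f(r(t))\right)=\mathbb{E}\left(e^{-2\lambda^2\int_0^t\frac{ds}{\sinh^2(2r(s))}}f(r(t))\right).
\]
The key observation is that the exponent $-2\lambda^2\int_0^t\frac{ds}{\sinh^2(2r(s))}$ is exactly $-\tfrac12\int_0^t\phi(r(s))^2\,ds$ for $\phi(r):=\frac{2\lambda}{\sinh(2r)}$; unlike in the spherical case, no additive constant (hence no deterministic prefactor) is produced here.

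Next I would introduce the exponential local martingale
\[
D_t^\lambda:=\exp\left(2\lambda\int_0^t\frac{d\gamma(s)}{\sinh(2r(s))}-2\lambda^2\int_0^t\frac{ds}{\sinh^2(2r(s))}\right),
\]
where $\gamma$ drives the radial equation $dr(t)=\coth(2r(t))\,dt+d\gamma(t)$ (recall $r$ has generator $\mathcal{L}^{0,0}$). Applying It\^o's formula to $g(r)=\ln\tanh r$, for which $g'(r)=\frac{2}{\sinh(2r)}$ and $g''(r)=-\frac{4\coth(2r)}{\sinh(2r)}$, the drift cancels \emph{completely}, giving $d\,\ln\tanh(r(t))=\frac{2}{\sinh(2r(t))}\,d\gamma(t)$. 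Hence
\[
D_t^\lambda=\left(\frac{\tanh(r(t))}{\tanh(r(0))}\right)^{\lambda}\exp\left(-2\lambda^2\int_0^t\frac{ds}{\sinh^2(2r(s))}\right).
\]
Since $\tanh(r(t))\le 1$, we get the bound $D_t^\lambda\le(\tanh(r(0)))^{-\lambda}$, so $D^\lambda$ is a genuine bounded martingale and defines a probability measure $\mathbb{P}^\lambda:=D_t^\lambda\,\mathbb{P}$ on $\mathcal{F}_t$.

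From the explicit form of $D_t^\lambda$ it follows that $\mathbb{E}(e^{i\lambda\theta(t)}f(r(t)))=\mathbb{E}^\lambda\big((\tanh(r(0))/\tanh(r(t)))^{\lambda}f(r(t))\big)$. By Girsanov's theorem, $\beta(t):=\gamma(t)-2\lambda\int_0^t\frac{ds}{\sinh(2r(s))}$ is a Brownian motion under $\mathbb{P}^\lambda$, and $r$ then solves $dr(t)=\big(\coth(2r(t))+\frac{2\lambda}{\sinh(2r(t))}\big)\,dt+d\beta(t)$. Using $\coth r-\tanh r=\frac{2}{\sinh(2r)}$ and $\coth r+\tanh r=2\coth(2r)$, this drift equals $(\lambda+\tfrac12)\coth r+(\tfrac12-\lambda)\tanh r$, so under $\mathbb{P}^\lambda$ the process $r$ is a hyperbolic Jacobi diffusion with generator $\mathcal{L}^{\lambda,-\lambda}$ and transition density $q_t^{\lambda,-\lambda}(r(0),\cdot)$. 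Writing both expectations as integrals against $q_t^{0,0}(r(0),\cdot)$ and $q_t^{\lambda,-\lambda}(r(0),\cdot)$ and using that $f$ is arbitrary yields
\[
\mathbb{E}\left(e^{i\lambda\theta(t)}\mid r(t)=r\right)q_t^{0,0}(r(0),r)=\left(\frac{\tanh(r(0))}{\tanh(r)}\right)^{\lambda}q_t^{\lambda,-\lambda}(r(0),r),
\]
which is the claimed identity after replacing $\lambda$ by $|\lambda|$.

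The step I expect to be most delicate is the justification of the change of measure: one must check that $\int_0^t\frac{ds}{\sinh^2(2r(s))}<\infty$ almost surely and that $D^\lambda$ is a true martingale. The first point follows because $0$ is polar for $r$ — near the origin the drift $\coth(2r)\sim\frac{1}{2r}$ makes $r$ behave like a two-dimensional Bessel process, exactly what is needed for the winding functional in \eqref{eq:clockWindingCH1} to be well defined — and the second is immediate from the uniform bound $D_t^\lambda\le(\tanh(r(0)))^{-\lambda}$. It is precisely the complete cancellation of the drift of $\ln\tanh r$ that removes the exponential factor $e^{-2|\lambda|(|\lambda|+1)t}$ appearing in the spherical analogue of Proposition~\ref{prop:Fourier(t)ransform_CP}.
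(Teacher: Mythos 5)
Your proof is correct and follows essentially the same route as the paper's: the Yor transform via the exponential martingale $D_t^\lambda$, the It\^o computation showing the drift of $\ln\tanh(r)$ vanishes, and Girsanov identifying the tilted radial process as a hyperbolic Jacobi diffusion with generator $\mathcal{L}^{\lambda,-\lambda}$. Your version is in fact slightly cleaner (you skip the paper's unused rewriting of $\sinh^{-2}$ in terms of $\coth^2$, and your $\tanh(r)$ rather than the paper's $\tanh(2r)$ in the intermediate martingale formula is the correct expression, consistent with the final statement).
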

\begin{proof}
Using symmetry, we can assume $\lambda >0$.
From equation \eqref{eq:clockWindingCH1} we have that for every bounded Borel function $f$
\begin{align}\label{eq:Laplace-transform-complex-hyperbolic-line}
\begin{split}
    \mathbb{E}\left( e^{i\lambda\theta(t)} f(r(t))\right) =&\mathbb{E}\left(\exp\left(-\lambda\beta_{\int_0^t \frac{4ds}{\sinh^2 (2r_s)}}\right) f(r(t))\right) 
    =\mathbb{E}\left(\exp\left(-2\lambda^2\int_0^t\frac{ds}{\sinh^2 (2r_s)}\right) f(r(t))\right)\\
    =&e^{2\lambda^2 t}\mathbb{E}\left(\exp\left(-2\lambda^2\int_0^t \coth^2 (2r_s)ds\right) f(r(t))\right) ,
\end{split}
\end{align}
where $(\beta (t))_{t\geq 0}$ is a Brownian motion independent from $(r(t))_{t\geq 0}$.
Observe that $(r(t))_{t\geq 0}$ is the unique strong solution to the stochastic differential equation
\begin{align*}
    r(t) =r(0) +\int_0^t\coth (2r(s))ds+\gamma (t)
\end{align*}
for some Brownian motion $(\gamma (t))_{t\geq 0}$.

To obtain the expression of the Fourier transform we consider the local martingale:
\begin{align*}
M^\lambda (t) =\exp\left( 2\lambda \int_0^t\frac{d\gamma (s)}{\sinh (2r(s))}  -2\lambda^2\int_0^t\frac{ds}{\sinh^2 (2r(s))} \right) .
\end{align*}
From It\^o's formula, we compute
\[
M^\lambda (t)=\left(\frac{\tanh (2r(t))}{\tanh (2r(0))}\right)^{\lambda}\exp\left(-2\lambda^2 \int_0^t\frac{ds}{\sinh^2 (2r(s))} \right) .
\]
Since $(M^{\lambda}(t))_{t\geq 0}$ is bounded from above by $(\tanh (2r(0)))^{-\lambda}$ it is a martingale and as such, we may introduce the following probability measure $\mathbb{P}^\lambda$:
\[
\mathbb{P}_{| \mathcal{F}_t} ^\lambda
=M^{\lambda}(t) \mathbb{P}_{| \mathcal{F}_t}
=\left(\frac{\tanh (2r(t))}{\tanh (2r(0))}\right)^{\lambda}\exp\left(- 2\lambda^2 \int_0^t\frac{1}{\sinh^2 (2r(s))}ds\right) \mathbb{P}_{| \mathcal{F}_t},
\]
where $(\mathcal{F}_t)_{t\geq 0}$ is the natural filtration of $(r(t))_{t\geq 0}$.
Letting $\mathbb{E}^{\lambda}$ denote the corresponding expectation, we get
\begin{equation}\label{eq:integration1CH1}
\mathbb{E}\left( e^{i \lambda \theta(t)} f(r(t)) \right)= \mathbb{E}^{\lambda}\left(\left(\frac{\tanh (2r(0))}{\tanh (2r(t))}\right)^{\lambda} f(r(t))\right) .
\end{equation}
By Girsanov's theorem we know that the process
\[
\beta (t)=\gamma (t)-2\lambda\int_0^t\frac{ds}{\sinh (2r (s))} =\gamma (t)-2\lambda\int_0^t\coth (2r (s))ds +2\lambda\int_0^t\tanh (r (s))ds
\]
is a Brownian motion under $\mathbb{P}^\lambda$.
As a matter of fact 
\[
r(t)= \beta (t) +(2\lambda+1)\int_0^t\coth (2r (s))ds -2\lambda\int_0^t\tanh (r(s))ds,
\] 
is a hyperbolic Jacobi diffusion under $\mathbb{P}^{\lambda}$ with generator
\[
\mathcal{L}^{\lambda,\lambda}=\frac{1}{2} \frac{\partial^2}{\partial r^2}+\left(\left(\lambda+\frac{1}{2}\right)\coth (r)+\left( -\lambda+\frac{1}{2}\right)\tanh (r)\right)\frac{\partial}{\partial r}.
\]
From equation \eqref{eq:integration1CH1} we therefore obtain
\begin{align*}
   \int_0^{\infty} \mathbb{E}\left( e^{i \lambda \theta(t)} \mid r(t)=r \right)f(r)q_t^{0,0}(r(0),r) dr 
 = \int_0^{\infty}\left(\frac{\tanh (2r(0))}{\tanh (2r(t))}\right)^{\lambda} f(r) q_t^{\lambda,-\lambda}(r(0),r) dr.
\end{align*}
Since this holds for every bounded Borel function $f$ we obtain the result.

\end{proof}

\begin{proposition}
    Let $r  >0$.  The conditional law $\mathbb{P}_{\theta (t) \mid r(t)=r}$ of $\theta (t)$ given $r(t)=r$ is given by
    \[
    d\mathbb{P}_{\theta(t) \mid r(t)=r}(\theta)= f_{r,t} (\theta) d\theta,
    \]
    where 
    \begin{align}\label{fr CH1}
    f_{r,t}(\theta) = \frac{\sinh(2r) }{2\pi^2 q_t^{0,0}(r(0),r)}  \int_0^{\infty} \int_0^1 \int_0^1 &  e^{-\frac{1}{2}(p^2+1) t} \frac{v^{-\frac{1+ip}{2}}(1-v)^{- \frac{1-ip}{2}} u^{-\frac{1-ip}{2}}(1-u)^{ - \frac{1+ip}{2}}  }{\left( 1+v\sinh(r)^2 \right)^\frac{1+ip}{2}\left( 1+u\sinh(r(0))^2 \right)^\frac{1-ip}{2}}  \\
     &  \frac{-\ln \left[ \tanh (r(0))\tanh (r)(1-u)(1-v)\right] }{ (\ln \left[ \tanh (r(0))\tanh (r)(1-u)(1-v)\right] )^2+\theta^2}  \left|\frac{\Gamma\left(\frac{1+ip}{2}\right)}{\Gamma (ip)\Gamma \left(\frac{1-ip}{2}\right)}\right|^2 \, du dv dp. \notag
    \end{align}
    is a smooth function on $\mathbb R$ that satisfies for some $C(r,t)>0$,
    \begin{align}\label{CH1: fr}
    f_{r,t} (\theta) \sim_{|\theta| \to +\infty} \frac{C(r,t)}{\theta^2}.
    \end{align}
\end{proposition}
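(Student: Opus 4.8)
The plan is to derive \eqref{fr CH1} by inverting the conditional characteristic function of Proposition \ref{prop:Fourier(t)ransform_CH} and then to read the smoothness and the tail \eqref{CH1: fr} off the resulting integral. First I would write, by Fourier inversion,
\[
f_{r,t}(\theta)=\frac{1}{2\pi}\int_{\mathbb R}e^{-i\lambda\theta}\left(\frac{\tanh(r(0))}{\tanh(r)}\right)^{|\lambda|}\frac{q_t^{|\lambda|,-|\lambda|}(r(0),r)}{q_t^{0,0}(r(0),r)}\,d\lambda,
\]
and substitute the hyperbolic Jacobi heat kernel \eqref{jacobi hyperbolic heat kernel} for $q_t^{|\lambda|,-|\lambda|}$. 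The decisive feature is that for $\alpha=|\lambda|$, $\beta=-|\lambda|$ one has $\alpha+\beta+1=1$, so the spectral factor is $e^{-\frac12(p^2+1)t}$, free of $\lambda$; the spherical functions $\Phi_p^{|\lambda|,-|\lambda|}$ are then hypergeometric functions whose third parameter $|\lambda|+1$ I would resolve through the Euler integral representation of ${}_2F_1$, introducing the two variables $u,v\in(0,1)$. Using the symmetry of ${}_2F_1$ in its first two parameters, I would take $\tfrac{1-ip}{2}$ as the integration exponent for the factor at $r$ and $\tfrac{1+ip}{2}$ for the factor at $r(0)$, which reproduces the conjugate pattern of $u,v$-powers visible in \eqref{fr CH1}.

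The heart of the computation is the collection of the $\lambda$-dependent factors. The explicit powers recombine as
\[
(\tanh(r(0)))^{|\lambda|}(\tanh(r))^{-|\lambda|}(\sinh r)^{2|\lambda|}(\cosh r)^{-2|\lambda|}=(\tanh(r(0))\tanh(r))^{|\lambda|},
\]
while the two Euler prefactors contribute $(1-u)^{|\lambda|}(1-v)^{|\lambda|}$; together these assemble a single $e^{-|\lambda|a}$ with $a=a(u,v,p)=-\ln[\tanh(r(0))\tanh(r)(1-u)(1-v)]>0$. Simultaneously the remaining $\lambda$-dependent $\Gamma$-factors --- the two Euler prefactors $\Gamma(|\lambda|+1)/[\Gamma(\cdot)\Gamma(|\lambda|+\tfrac{1\pm ip}{2})]$ against the Harish--Chandra density $m_{|\lambda|,-|\lambda|}(p)$ of \eqref{harish-chandra function} --- cancel telescopically, leaving exactly the $\lambda$-free weight $\big|\Gamma(\tfrac{1+ip}{2})/(\Gamma(ip)\Gamma(\tfrac{1-ip}{2}))\big|^2=1/|\Gamma(ip)|^2$. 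The one-dimensional inversion $\tfrac{1}{2\pi}\int_{\mathbb R}e^{-i\lambda\theta}e^{-|\lambda|a}\,d\lambda=\tfrac1\pi\,a/(a^2+\theta^2)$ then produces the Cauchy kernel in $\theta$ of \eqref{fr CH1}, and accounting for the constants ($\tfrac1\pi$ from \eqref{jacobi hyperbolic heat kernel}, $\tfrac1\pi$ from the inversion, and $\sinh r\cosh r=\tfrac12\sinh(2r)$ from the $\lambda$-free prefactor) gives the normalisation $\sinh(2r)/(2\pi^2 q_t^{0,0}(r(0),r))$.

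To make this rigorous I would justify the interchanges of integration by Fubini. Since $(1-u)(1-v)\le1$, one has $a\ge a_0:=-\ln[\tanh(r(0))\tanh(r)]>0$, so the conditional characteristic function is dominated by $Ce^{-|\lambda|a_0}\in L^1(\mathbb R)$: this at once shows the conditional law is absolutely continuous with the inversion integral as its density and licenses exchanging $\lambda$ with $(u,v,p)$. In the $u,v$ variables the singularities are of modulus $u^{-1/2}$, $(1-u)^{-1/2}$, $v^{-1/2}$, $(1-v)^{-1/2}$, hence integrable, and in $p$ the Gaussian $e^{-\frac12 p^2 t}$ dominates the growth $1/|\Gamma(ip)|^2=\pi^{-1}p\sinh(\pi p)$, so the triple integral converges absolutely, its value being real since the spherical functions $\Phi_p^{|\lambda|,-|\lambda|}$ are real-valued.

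Finally, smoothness of $f_{r,t}$ follows by differentiating under the integral sign, the $\theta$-derivatives of $a/(a^2+\theta^2)$ being dominated locally uniformly by integrable functions of $(u,v,p)$, and \eqref{CH1: fr} follows from $a/(a^2+\theta^2)\sim a/\theta^2$ together with dominated convergence, with $C(r,t)=\frac{\sinh(2r)}{2\pi^2 q_t^{0,0}(r(0),r)}\iiint[\cdots]\,a\,du\,dv\,dp$, where the extra factor $a$ grows only logarithmically as $u,v\to1$ and so is harmless against the $(1-u)^{-1/2}(1-v)^{-1/2}$ singularities. I expect the main obstacle to be less the $\Gamma$-cancellation than the convergence control after inversion: one must tame the exponentially growing Plancherel density $1/|\Gamma(ip)|^2$ uniformly in the auxiliary variables, which is precisely where the continuous hyperbolic spectrum replaces the discrete Gegenbauer sums of the $\mathbb{C}P^1$ case and makes the Fubini bookkeeping the delicate point.
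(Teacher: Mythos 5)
Your proposal follows essentially the same route as the paper: Fourier inversion of the conditional characteristic function from Proposition \ref{prop:Fourier(t)ransform_CH}, substitution of the spectral formula \eqref{jacobi hyperbolic heat kernel} (with the key observation that $\alpha+\beta+1=1$ makes the spectral exponential $\lambda$-free), the Euler integral representation of $\Phi_p^{|\lambda|,-|\lambda|}$, the telescoping cancellation of the $\Gamma$-factors against $m_{|\lambda|,-|\lambda|}(p)$ down to $1/|\Gamma(ip)|^2$, and the recombination of all remaining $\lambda$-powers into $e^{-a|\lambda|}$ inverted to a Cauchy kernel. The argument and the constants check out; you in fact supply the Fubini/domination details and the tail analysis that the paper leaves implicit.
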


\begin{proof}
From equation \eqref{jacobi hyperbolic heat kernel} and Proposition \ref{prop:Fourier(t)ransform_CH} we have
\begin{align*}
& \mathbb{E} \left( e^{i \lambda \theta(t)}  \mid r(t)=r\right) \\
 =& \left(\frac{\tanh (r(0))}{\tanh (r)}\right)^{|\lambda |} \frac{q_t^{|\lambda |,-|\lambda |}(r(0),r)}{q_t^{0,0}(r(0),r)} \\
 =&\sinh(2r) \frac{\left(\tanh (r(0))\tanh (r)\right)^{|\lambda |} }{2\pi q_t^{0,0}(r(0),r)}\int_0^{\infty}e^{-\frac{1}{2}(p^2+1) t}\Phi_p^{|\lambda |,-|\lambda |} (r(0)) \Phi_p^{|\lambda |,-|\lambda |}(r) m_{|\lambda| ,-|\lambda| }(p) dp.
\end{align*}
Since $ \Phi_p^{|\lambda |,-|\lambda |}$ is an hypergeometric function, it admits the following Euler integral representation, see \cite[Section 2.2]{special}:
\[
\Phi_p^{|\lambda |,-|\lambda |}(r)=\frac{\Gamma(|\lambda|+1)}{\Gamma \left(\frac{1-ip}{2}\right)\Gamma \left(|\lambda|+\frac{1+ip}{2}\right)} \int_0^1 v^{-\frac{1+ip}{2}}(1-v)^{|\lambda| - \frac{1-ip}{2}} \frac{dv}{\left( 1+v\sinh(r)^2 \right)^\frac{1+ip}{2}}.
\]
From \eqref{harish-chandra function} we have
 \begin{align*}
        m_{|\lambda| ,-|\lambda| }(p)=&\left|\frac{\Gamma\left(\frac{1+ip}{2}\right)\Gamma\left(\frac{2|\lambda| +1+ip}{2}\right)}{\Gamma (1+|\lambda| )\Gamma (ip)}\right|^2. 
    \end{align*}
Therefore, using the fact that $\Phi_p^{|\lambda |,-|\lambda |} (r(0)) =\overline{\Phi_p^{|\lambda |,-|\lambda |} (r(0)) }$ we obtain
\begin{align*}
 \mathbb{E} \left( e^{i \lambda \theta(t)}  \mid r(t)=r\right) & =\sinh(2r) \frac{\left(\tanh (r(0))\tanh (r)\right)^{|\lambda |} }{2\pi q_t^{0,0}(r(0),r)}  \\ 
  &\int_0^{\infty} \int_0^1 \int_0^1 e^{-\frac{1}{2}(p^2+1) t} \frac{v^{-\frac{1+ip}{2}}(1-v)^{|\lambda| - \frac{1-ip}{2}} u^{-\frac{1-ip}{2}}(1-u)^{|\lambda| - \frac{1+ip}{2}}  }{\left( 1+v\sinh(r)^2 \right)^\frac{1+ip}{2}\left( 1+u\sinh(r(0))^2 \right)^\frac{1-ip}{2}} \left|\frac{\Gamma\left(\frac{1+ip}{2}\right)}{\Gamma (ip)\Gamma \left(\frac{1-ip}{2}\right)}\right|^2du dv dp.
\end{align*}   
Using the formula for the Fourier transform of a Cauchy distribution, we now write
\begin{align*}
\left(\tanh (r(0))\tanh (r)\right)^{|\lambda |}(1-u)^{|\lambda|} (1-v)^{|\lambda|}&=e^{-\left(-\ln \left[ \tanh (r(0))\tanh (r)(1-u)(1-v)\right] \right) |\lambda|} \\
 &=\frac{1}{\pi} \int_0^{+\infty} e^{i\lambda \theta} \frac{-\ln \left[ \tanh (r(0))\tanh (r)(1-u)(1-v)\right] }{ (\ln \left[ \tanh (r(0))\tanh (r)(1-u)(1-v)\right] )^2+\theta^2} d\theta
\end{align*} 
and the stated result follows.
\end{proof}

\begin{theorem}\label{law winding CH1}
  
For $n \in \mathbb Z$, $t>0$, $w=\tanh (r) e^{i \theta} \in \mathbb{C}H^1$, $r\in (0,+\infty)$, and $\theta \in [0,2\pi)$ we have
\[
\mathbb{P}\left(  \theta(t) =\theta +2\pi n \mid w(t)= w\right)=2\pi \frac{\sinh(2d(w(0),w))}{\sinh(2d(0,w))} \frac{q_t^{0,0}(d(0,w(0)), d(0,w)))}{q_t^{0,0}(0,d(w(0),w))} \, f_{r,t}(\theta +2\pi n),
\]
and therefore  
\[
\mathbb{P}\left(  \theta(t) =\theta +2\pi n \mid w(t)= w\right) \sim_{|n| \to +\infty} \frac{C(w,t)}{n^2}
\]
for some $C(w,t) >0$. 
\end{theorem}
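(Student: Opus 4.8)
The plan is to transcribe the proof of Theorem \ref{law winding CP1} into the hyperbolic setting, the only structural input being Lemma 6.1 of \cite{Yor1980LoiDL}. That lemma applies verbatim here because the skew-product representation \eqref{eq:clockWindingCH1} writes the winding as $\theta(t)=\beta\big(\int_0^t 4\,ds/\sinh^2(2r_s)\big)$ with $(\beta(t))_{t\ge 0}$ a one-dimensional Brownian motion independent of the radial hyperbolic Jacobi diffusion $(r(t))_{t\ge 0}$, which is exactly the planar-type structure Yor requires. Writing $w(t)=\tanh(r(t))\,e^{i\theta(t)}$, Lemma 6.1 yields
\[
d\mathbb{P}_{\theta(t)\mid w(t)=\tanh(r)e^{i\theta}}
=\frac{U(\tanh(r))}{\tanh(r)\,p_{\mathbb{C}H^1}(t,w(0),w)}\sum_{n\in\mathbb{Z}} f_{r,t}(\theta+2\pi n)\,\delta_{\theta+2\pi n},
\]
where $U$ is the density of $\tanh(r(t))$ with respect to Lebesgue measure and $f_{r,t}$ is the density from \eqref{fr CH1}. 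It therefore suffices to evaluate the prefactor.

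First I would compute $U$: since $r(t)$ has generator $\mathcal{L}^{0,0}$ and hence Lebesgue density $q_t^{0,0}(r(0),\cdot)$, the change of variable $\rho=\tanh(r)$, $d\rho=\cosh(r)^{-2}\,dr$, gives $U(\tanh(r))=\cosh(r)^2\,q_t^{0,0}(r(0),r)$. Next I would insert \eqref{heat kernel CHn} at $n=1$; using $1-|w|^2=\cosh(r)^{-2}$, so that $(1-|w|^2)^{-2}=\cosh(r)^4$, this reads
\[
p_{\mathbb{C}H^1}(t,w(0),w)=\frac{1}{2\pi}\frac{q_t^{0,0}(0,d(w(0),w))}{\cosh(d(w(0),w))\sinh(d(w(0),w))}\cosh(r)^4.
\]
Substituting both expressions into the prefactor and simplifying with $\tanh(r)\cosh(r)^2=\tfrac12\sinh(2r)$ and $\cosh(d)\sinh(d)=\tfrac12\sinh(2d)$ (with $d=d(w(0),w)$ and $r=d(0,w)$) collapses the powers of $\cosh(r)$ and reproduces exactly the prefactor displayed in the statement. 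This step is routine bookkeeping once the two densities are in hand.

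Finally, the tail estimate is immediate from \eqref{CH1: fr}: since $f_{r,t}(\theta)\sim C(r,t)/\theta^2$ as $|\theta|\to+\infty$ and the prefactor does not depend on $n$, evaluating $f_{r,t}$ at $\theta+2\pi n$ gives $\mathbb{P}(\theta(t)=\theta+2\pi n\mid w(t)=w)\sim C(w,t)/n^2$. I do not expect a genuine obstacle here: the analytically hard work, namely the explicit density $f_{r,t}$ and its $1/\theta^2$ tail, was already carried out in the preceding proposition, so the only point needing care is confirming that Yor's Lemma 6.1 transfers to this curved setting, and that is secured entirely by the skew-product structure \eqref{eq:clockWindingCH1}, just as for $\mathbb{C}P^1$.
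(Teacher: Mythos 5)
Your proposal is correct and follows essentially the same route as the paper: the paper's proof of this theorem simply refers back to the proof of Theorem \ref{law winding CP1}, which likewise invokes Lemma 6.1 of Yor together with the skew-product representation, the heat-kernel formula \eqref{heat kernel CHn}, the change of variables for the density of $\tanh(r(t))$, and the tail estimate \eqref{CH1: fr}. Your explicit verification of the prefactor via $\tanh(r)\cosh(r)^2=\tfrac12\sinh(2r)$ and $\cosh(d)\sinh(d)=\tfrac12\sinh(2d)$ is exactly the ``simple change of variable'' the paper leaves implicit.
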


\begin{proof}
The proof is similar to that of Theorem \ref{law winding CP1}.
\end{proof}

We are next interested in asymptotics when $t \to +\infty$. We will use  the following lemma.

\begin{lemma}\label{lemma:asymptotics_general_hyperbolic_heat_kernel}
   For $\alpha>-1$ and $\beta \in\mathbb{R}$,  we have the following asymptotics
   \begin{align*}
       q_t^{\alpha,\beta}(r(0),r) \underset{t \to +\infty}{\sim} \frac{(\sinh(r))^{2\alpha +1}(\cosh (r))^{2\beta +1}}{\sqrt{2\pi} t^{3/2}} e^{-\frac{1}{2} (\alpha+\beta+1)^2t} \Phi_0^{\alpha,\beta}(r(0)) \Phi_0^{\alpha,\beta}(r)\left|\frac{\Gamma\left(\frac{\alpha +\beta +1}{2}\right)\Gamma\left(\frac{\alpha -\beta +1}{2}\right)}{\Gamma (1+\alpha )}\right|^2,
   \end{align*}
   when $t \to +\infty$
    
\end{lemma}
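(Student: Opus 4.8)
The plan is to apply a Watson-type (Laplace) asymptotic analysis to the integral representation \eqref{jacobi hyperbolic heat kernel}. First I would pull the $p$-independent factors out of the integral and write
\[
q_t^{\alpha,\beta}(r(0),r) = \frac{(\sinh r)^{2\alpha+1}(\cosh r)^{2\beta+1}}{\pi}\, e^{-\frac{1}{2}(\alpha+\beta+1)^2 t}\, I(t),
\]
where $I(t) := \int_0^\infty e^{-\frac{1}{2}p^2 t}\,\Phi_p^{\alpha,\beta}(r(0))\,\Phi_p^{\alpha,\beta}(r)\, m_{\alpha,\beta}(p)\, dp$. Since the Gaussian weight $e^{-\frac{1}{2}p^2 t}$ concentrates near $p=0$ as $t\to+\infty$, the asymptotics of $I(t)$ are governed entirely by the behaviour of the integrand at $p=0$, so everything reduces to expanding $\Phi_p^{\alpha,\beta}(r(0))\Phi_p^{\alpha,\beta}(r)m_{\alpha,\beta}(p)$ there.

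The key step is to extract the vanishing order of the Plancherel density $m_{\alpha,\beta}$ at $p=0$. Using $\Gamma(ip)=\Gamma(1+ip)/(ip)$ in \eqref{harish-chandra function}, I would write
\[
m_{\alpha,\beta}(p) = p^2\,\frac{1}{|\Gamma(1+ip)|^2}\left|\frac{\Gamma\!\left(\tfrac{\alpha+\beta+1+ip}{2}\right)\Gamma\!\left(\tfrac{\alpha-\beta+1+ip}{2}\right)}{\Gamma(1+\alpha)}\right|^2,
\]
so that $m_{\alpha,\beta}(p)/p^2$ extends continuously to $p=0$ with value $\left|\Gamma(\tfrac{\alpha+\beta+1}{2})\Gamma(\tfrac{\alpha-\beta+1}{2})/\Gamma(1+\alpha)\right|^2$. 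Setting $g(p):=\Phi_p^{\alpha,\beta}(r(0))\Phi_p^{\alpha,\beta}(r)\,m_{\alpha,\beta}(p)/p^2$, which is continuous at $0$ with $g(0)=\Phi_0^{\alpha,\beta}(r(0))\Phi_0^{\alpha,\beta}(r)\left|\Gamma(\tfrac{\alpha+\beta+1}{2})\Gamma(\tfrac{\alpha-\beta+1}{2})/\Gamma(1+\alpha)\right|^2$, gives $I(t)=\int_0^\infty e^{-\frac{1}{2}p^2 t}\,p^2\,g(p)\,dp$. I would then conclude via the moment computation $\int_0^\infty e^{-\frac{1}{2}p^2 t}\,p^2\,dp=\frac{\sqrt{2\pi}}{2}\,t^{-3/2}$ together with $I(t)\sim g(0)\cdot\frac{\sqrt{2\pi}}{2}\,t^{-3/2}$; substituting back and using $\frac{1}{\pi}\cdot\frac{\sqrt{2\pi}}{2}=\frac{1}{\sqrt{2\pi}}$ produces exactly the claimed equivalent.

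The main obstacle is the rigorous justification of this Watson-type argument, namely the interchange of limit and integral. This requires two ingredients. First, controlling the tail: for fixed $r,r(0)$ the normalized spherical functions $\Phi_p^{\alpha,\beta}$ are bounded in $p\in\mathbb{R}$ and $m_{\alpha,\beta}(p)$ grows at most polynomially, so that for any $\delta>0$ the bound $e^{-\frac{1}{2}p^2 t}\le e^{-\frac{1}{4}\delta^2 t}e^{-\frac{1}{4}p^2 t}$ on $[\delta,\infty)$ shows the contribution of $\int_\delta^\infty$ is $O(e^{-c\delta^2 t})$, which is exponentially negligible against $t^{-3/2}$. Second, on $[0,\delta]$ the continuity of $g$ at $0$ suffices: rescaling $p=s/\sqrt{t}$ and applying dominated convergence replaces $g(p)$ by $g(0)$ up to an $o(t^{-3/2})$ error. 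Verifying the uniform boundedness of $\Phi_p^{\alpha,\beta}$ in $p$ and the polynomial growth of $m_{\alpha,\beta}$ — both following from classical estimates on the Gauss hypergeometric function and on ratios of Gamma functions — is the only genuinely technical point, everything else being the routine Laplace computation above.
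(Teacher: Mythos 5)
Your proposal is correct and follows essentially the same route as the paper: both arguments isolate the quadratic vanishing of the Plancherel density $m_{\alpha,\beta}$ at $p=0$ (via $\left|\Gamma(ip)\right|^{-2}\sim p^{2}$), reduce to a Gaussian second-moment computation after the rescaling $p=u/\sqrt{t}$, and justify the limit interchange by dominated convergence using the Koornwinder bounds on the spherical functions. The constants check out, so nothing further is needed.
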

\begin{proof}

The subsitution $p=u/\sqrt{t}$ gives
\begin{align*}
    \int_0^{\infty}e^{-\frac{1}{2}p^2t}\Phi_p^{\alpha ,\beta } (r(0)) \Phi_p^{\alpha ,\beta }(r) m_{\alpha ,\beta }(p) dp=\frac{1}{\sqrt{t}}\int_0^{\infty}e^{-\frac{1}{2}u^2}\Phi_{u/\sqrt{t}}^{\alpha ,\beta }(r(0))\Phi_{u/\sqrt{t}}^{\alpha ,\beta } (r)m_{\alpha ,\beta }\left(\frac{u}{\sqrt{t}}\right) du,
\end{align*}
where
\begin{align*}
   m_{\alpha,\beta }\left(\frac{u}{\sqrt{t}}\right)&=\left|\frac{\Gamma\left(\frac{\alpha +\beta +1+iu/\sqrt{t}}{2}\right)\Gamma\left(\frac{\alpha -\beta +1+iu/\sqrt{t}}{2}\right)}{\Gamma (1+\alpha )\Gamma (iu/\sqrt{t})}\right|^2  \\
   & \underset{t \to +\infty}{\sim}   \frac{u^2}{t} \left|\frac{\Gamma\left(\frac{\alpha +\beta +1}{2}\right)\Gamma\left(\frac{\alpha -\beta +1}{2}\right)}{\Gamma (1+\alpha )}\right|^2
 \end{align*}
 Therefore, thanks to (6.3) and (6.4) in  \cite{Koornwinder1984} which allow to use dominated convergence,  we obtain
 \begin{align*}
 \int_0^{\infty}e^{-\frac{1}{2}p^2t}\Phi_p^{\alpha ,\beta } (r(0)) \Phi_p^{\alpha ,\beta }(r) m_{\alpha ,\beta }(p) dp\underset{t \to +\infty}{\sim} \frac{\pi^{1/2}}{\sqrt{2}t^{3/2}}\Phi_0^{\alpha,\beta}(r(0)) \Phi_0^{\alpha,\beta}(r)\left|\frac{\Gamma\left(\frac{\alpha +\beta +1}{2}\right)\Gamma\left(\frac{\alpha -\beta +1}{2}\right)}{\Gamma (1+\alpha )}\right|^2.
 \end{align*}
    The conclusion then follows from \eqref{jacobi hyperbolic heat kernel}.
\end{proof}
The next theorem gives the asymptotic law when $t\to +\infty$ of the index of the Brownian loop of length $t$.

 \begin{theorem}
For $n \in \mathbb Z$, $w=\tanh (r) e^{i \theta} \in \mathbb{C}H^1$, $r\in (0,+\infty)$, and $\theta \in [0,2\pi)$ we have
\begin{small}
\begin{align*}
  \lim_{ t \to +\infty} \mathbb{P} &\left(  \theta(t)  =\theta +2\pi n \mid w(t)  = w\right)   
 = \frac{2}{\pi^2 \Phi_0^{0,0}(d(w(0),w)) }\cdot \\
 &\int_0^1 \int_0^1 \frac{v^{-\frac{1}{2}}(1-v)^{- \frac{1}{2}} u^{-\frac{1}{2}}(1-u)^{ - \frac{1}{2}}  }{\left( 1+v\sinh(r)^2 \right)^\frac{1}{2}\left( 1+u\sinh(r(0))^2 \right)^\frac{1}{2}} 
\frac{-\ln \left[ \tanh (r(0))\tanh (r)(1-u)(1-v)\right] }{ (\ln \left[ \tanh (r(0))\tanh (r)(1-u)(1-v)\right] )^2+(\theta+2\pi n)^2}   \, du dv . 
    \end{align*}
    \end{small}
\end{theorem}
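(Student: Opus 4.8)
The plan is to combine the three ingredients already assembled: the exact conditional law from Theorem \ref{law winding CH1}, the explicit triple-integral expression \eqref{fr CH1} for $f_{r,t}$, and the long-time heat kernel asymptotics of Lemma \ref{lemma:asymptotics_general_hyperbolic_heat_kernel}. Writing $D := d(w(0),w)$, $r = d(0,w)$ and $r(0) = d(0,w(0))$, I would first substitute \eqref{fr CH1} into the formula of Theorem \ref{law winding CH1}. The factor $\sinh(2r)$ and the kernel $q_t^{0,0}(r(0),r)$ both cancel, leaving the clean expression
\[
\mathbb{P}\left(\theta(t)=\theta+2\pi n\mid w(t)=w\right) = \frac{\sinh(2D)}{\pi\, q_t^{0,0}(0,D)}\int_0^\infty\int_0^1\int_0^1 e^{-\frac12(p^2+1)t}\,\mathcal{G}_n(p,u,v)\,du\,dv\,dp,
\]
where $\mathcal{G}_n(p,u,v)$ collects the remaining $p$-dependent integrand of \eqref{fr CH1} with $\theta$ replaced by $\theta+2\pi n$. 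Both numerator and denominator carry an overall $e^{-t/2}$, so the task reduces to matching the polynomial-in-$t$ prefactors.

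For the denominator I would apply Lemma \ref{lemma:asymptotics_general_hyperbolic_heat_kernel} with $\alpha=\beta=0$, using that $\Phi_0^{0,0}(0)=1$ (hypergeometric value at argument $0$) and that $\left|\Gamma(1/2)^2/\Gamma(1)\right|^2=\pi^2$, to get $q_t^{0,0}(0,D)\sim \frac{\sinh(D)\cosh(D)}{\sqrt{2\pi}\,t^{3/2}}\,e^{-t/2}\,\Phi_0^{0,0}(D)\,\pi^2$. For the triple integral, the Gaussian weight $e^{-p^2t/2}$ localises the $p$-integration near $p=0$. Mirroring the proof of Lemma \ref{lemma:asymptotics_general_hyperbolic_heat_kernel}, I would substitute $p = s/\sqrt{t}$; then $e^{-p^2 t/2}=e^{-s^2/2}$ and the Harish-Chandra-type factor \eqref{harish-chandra function} satisfies $\left|\frac{\Gamma(\frac{1+ip}{2})}{\Gamma(ip)\Gamma(\frac{1-ip}{2})}\right|^2\sim p^2 = s^2/t$ as $p\to0$, since $\Gamma(ip)\sim 1/(ip)$ and $\Gamma(\frac{1\pm ip}{2})\to\Gamma(1/2)$. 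All the complex exponents $\frac{1\pm ip}{2}$ in $\mathcal{G}_n$ collapse to $\frac12$ at $p=0$, so the $u,v$-integral converges to
\[
H_n := \int_0^1\int_0^1 \frac{v^{-\frac12}(1-v)^{-\frac12}u^{-\frac12}(1-u)^{-\frac12}}{\left(1+v\sinh(r)^2\right)^{\frac12}\left(1+u\sinh(r(0))^2\right)^{\frac12}}\cdot\frac{-\ln\left[\tanh(r(0))\tanh(r)(1-u)(1-v)\right]}{(\ln\left[\tanh(r(0))\tanh(r)(1-u)(1-v)\right])^2+(\theta+2\pi n)^2}\,du\,dv,
\]
which is precisely the double integral in the statement. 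Using $\int_0^\infty s^2 e^{-s^2/2}\,ds = \sqrt{\pi/2}$, the triple integral is therefore asymptotic to $e^{-t/2}\,t^{-3/2}\,H_n\,\tfrac{\sqrt{2\pi}}{2}$.

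Finally I would divide: the $e^{-t/2}t^{-3/2}$ factors cancel, and inserting $\sinh(2D)=2\sinh(D)\cosh(D)$ reduces the constant to $\frac{2}{\pi^2\,\Phi_0^{0,0}(D)}$, yielding exactly the claimed limit. The main obstacle is not the algebra but justifying that the $p\to0$ extraction is legitimate, i.e. that one may pass the limit inside the triple integral after the rescaling $p=s/\sqrt t$. This requires a dominated-convergence argument uniform in $(u,v)\in(0,1)^2$: one needs a $t$-independent integrable majorant for $e^{-s^2/2}\,t^{-1}\mathcal{G}_n(s/\sqrt t,u,v)\cdot(\text{Gamma factor})$, controlling both the decay of the spherical functions $\Phi_p^{|\lambda|,-|\lambda|}$ in $p$ (via the estimates (6.3)--(6.4) of \cite{Koornwinder1984} already invoked in Lemma \ref{lemma:asymptotics_general_hyperbolic_heat_kernel}) and the integrable endpoint singularities $u^{-1/2},(1-u)^{-1/2},v^{-1/2},(1-v)^{-1/2}$, together with the harmless boundedness of the Cauchy-type factor. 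Once this uniform control is in place, Fubini and dominated convergence finish the proof.
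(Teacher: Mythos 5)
Your proposal is correct and follows essentially the same route as the paper: plug the explicit triple-integral form of $f_{r,t}$ into Theorem \ref{law winding CH1}, apply Lemma \ref{lemma:asymptotics_general_hyperbolic_heat_kernel} to the heat-kernel factor, and extract the $t\to+\infty$ limit of the $p$-integral via the rescaling $p=s/\sqrt{t}$ together with the $\sim p^2$ asymptotics of the Harish--Chandra density, justified by dominated convergence using the Koornwinder bounds. The only (cosmetic) difference is that you cancel $q_t^{0,0}(r(0),r)$ against the prefactor of $f_{r,t}$ before taking asymptotics, whereas the paper treats the kernel ratio and $\lim_t f_{r,t}$ separately; the constants work out identically.
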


\begin{proof}
To compute the limit, we directly use the formula in Theorem \ref{law winding CH1}.
We first get, from Lemma \ref{lemma:asymptotics_general_hyperbolic_heat_kernel}, that
\[
q_t^{0,0}(r(0),r) \underset{t \to +\infty}{\sim} e^{-\frac{1}{2}t} \frac{\pi^{3/2} \sinh(2r)}{2\sqrt{2} t^{3/2}} \Phi_0^{0,0}(r(0)) \Phi_0^{0,0}(r).
\]
This yields
\[
\lim_{t \to +\infty} \frac{q_t^{0,0}(d(0,w(0)), d(0,w)))}{q_t^{0,0}(0,d(w(0),w))} =\frac{\sinh(2d(0,w))}{\sinh(2d(w(0),w))} \frac{\Phi_0^{0,0}(r(0)) \Phi_0^{0,0}(r)}{ \Phi_0^{0,0}(d(w(0),w))}.
\]
It remains therefore to compute $\lim_{t\to +\infty} f_{r,t}(\theta)$.
In the integral \eqref{fr CH1} we perform the change of variable $s=p \sqrt{t}$ which yields
\begin{align*}
    f_{r,t}(\theta) = \frac{e^{-\frac{1}{2}t} \sinh(2r) }{2\pi^2\sqrt{t} q_t^{0,0}(r(0),r)} & \int_0^{\infty} \int_0^1 \int_0^1   e^{-\frac{1}{2}s^2} \frac{v^{-\frac{1+is/\sqrt{t}}{2}}(1-v)^{- \frac{1-is/\sqrt{t}}{2}} u^{-\frac{1-is/\sqrt{t}}{2}}(1-u)^{ - \frac{1+is/\sqrt{t}}{2}}  }{\left( 1+v\sinh(r)^2 \right)^\frac{1+is/\sqrt{t}}{2}\left( 1+u\sinh(r(0))^2 \right)^\frac{1-is/\sqrt{t}}{2}}\cdot  \\
      &  \frac{-\ln \left[ \tanh (r(0))\tanh (r)(1-u)(1-v)\right] }{ (\ln \left[ \tanh (r(0))\tanh (r)(1-u)(1-v)\right] )^2+\theta^2}  \left|\frac{\Gamma\left(\frac{1+is/\sqrt{t}}{2}\right)}{\Gamma (is/\sqrt{t})\Gamma \left(\frac{1-is/\sqrt{t}}{2}\right)}\right|^2 \, du dv ds .
    \end{align*}
    Using similar methods as in the proof of Lemma \ref{lemma:asymptotics_general_hyperbolic_heat_kernel}, we obtain
   \begin{align*}
    f_{r,t}(\theta) \underset{t\to +\infty}{\sim} \frac{1}{\pi^3 \Phi_0^{0,0}(r(0)) \Phi_0^{0,0}(r)}   \int_0^1 \int_0^1 &  \frac{v^{-\frac{1}{2}}(1-v)^{- \frac{1}{2}} u^{-\frac{1}{2}}(1-u)^{ - \frac{1}{2}}  }{\left( 1+v\sinh(r)^2 \right)^\frac{1}{2}\left( 1+u\sinh(r(0))^2 \right)^\frac{1}{2}}\cdot \\
      &  \frac{-\ln \left[ \tanh (r(0))\tanh (r)(1-u)(1-v)\right] }{ (\ln \left[ \tanh (r(0))\tanh (r)(1-u)(1-v)\right] )^2+\theta^2}   \, du dv ,
    \end{align*} 
    which completes the proof.
\end{proof}

\subsection{Windings on the anti de-Sitter space}
We parametrize the anti de Sitter space  
\[
\mathbf{AdS}^{2n+1}=\{ z \in \mathbb{C}^{n+1} | \, | z_1|^2+\cdots+|z_n|^2 -|z_{n+1}|^2=-1 \}
\]
using the coordinates
\begin{align}\label{invar}
(w,\theta)\longrightarrow \frac{e^{i\theta} }{\sqrt{1-|w|^2}} \left( w,1\right),
\end{align}
where $\theta \in [0,2\pi)$ and $w \in \mathbb{C}H^n$. 

 \begin{definition}
The one-form $d\theta$ on $\mathbf{AdS}^{2n+1}$ will be called the winding form on $\mathbf{AdS}^{2n+1}$.
 \end{definition}

 Note that $d\theta$ is closed but not exact and that the integral of  $d\theta$ along any loop is an integer multiple of $2\pi$.  We also note that since $\mathbf{AdS}^{2n+1}$ is diffeomorphic to $\mathbb{S}^1 \times \mathbb{C}^n$  the first integral de Rham cohomology group of  $\mathbf{AdS}^{2n+1}$ is diffeomorphic to $\mathbb Z$, so that $d\theta$ is one of the two generators of that cohomology.
 
 One of our goals in this section will to compute the exact  distribution of the law of the integral of $d\theta$ along Brownian loops. The Laplace operator for the canonical Lorentz metric on $\mathbf{AdS}^{2n+1}$ is denoted ${\square}_{\mathbf{AdS}^{2n+1}}$. We note that ${\square}_{\mathbf{AdS}^{2n+1}} $ is not an elliptic operator. For $\mu \ge 0$, we consider the \emph{Berger Laplacian}, which is the diffusion operator on $\mathbf{AdS}^{2n+1}$ given
\[
\Delta^\mu_{\mathbf{AdS}^{2n+1}} ={\square}_{\mathbf{AdS}^{2n+1}}+(1+\mu^2) \frac{\partial}{\partial \theta^2} .
 \]
 
For $\mu>0$, this operator is the Laplace-Beltrami operator of a Riemannian metric  and for  $\mu=0$, $\Delta^\mu_{\mathbf{AdS}^{2n+1}}$ is the horizontal Laplacian of the anti de-Sitter fibration,  see \cite[Section 4.2]{Baudoin2022}.

Throughout the section, we fix $\mu \ge 0$ and we denote by $(\beta^\mu(t))_{t \ge 0}$ a diffusion process with generator $\frac{1}{2} \Delta_{\mathbf{AdS}^{2n+1}}^\mu$, which is started from the point with coordinates $\frac{e^{i\theta(0)} }{\sqrt{1-|w(0)|^2}} \left( w(0),1\right) $.
By rotational invariance, we can assume,  without loss of generality, that $\theta(0)=0$.
Note that $(\beta^1(t))_{t \ge 0}$ is a Brownian motion and $(\beta^0(t))_{t \ge 0}$ a horizontal Brownian motion.
From \cite[Theorem 4.2.4]{Baudoin2022} we easily deduce that
\[
\beta^\mu(t)= \frac{e^{i\theta(t)} }{\sqrt{1+|w(t)|^2}} \left( w(t),1\right)
\]
where:
\begin{enumerate}
\item $(w(t))_{t \ge 0}$ is a Brownian motion (in local affine coordinates) on $\mathbb{C}H^n$ started from $w(0)$;
\item $\theta(t)=\mu B(t) + \frac{i}{2}\sum_{j=1}^n \int_0^t \frac{w_j(s) d\overline{w}_j(s)-\overline{w}_j(s) dw_j(s)}{1+|w(s)|^2},
 $ with $(B(t))_{t \ge 0}$ a one-dimensional Brownian motion started from 0, which is independent from $(w(t))_{t \ge 0}$.
\end{enumerate}
We will denote $r(t):=\arctanh |w(t)|$.
As pointed out in Subsection \ref{prelim BM CHn}, the process $(r(t))_{t\geq 0}$ is a Jacobi diffusion with generator $\mathcal{L}^{n-1,0}$. 

We have the following formula for the joint density of the couple $(r(t),\theta(t))$.

\begin{theorem}\label{joint ant de sitter}
Let $t>0$. The joint law of $(r(t),\theta(t))$ is given by
\[
d\mathbb{P}_{r(t),\theta(t)}  (r,\theta) = p_{t}(r,\theta)(\sinh (r))^{2n-1}\cosh (r) drd\theta\textrm{ for } r\ge 0\textrm{ and } \theta \in \mathbb R,
\]
where
\begin{equation}\label{eq8}
p_t(r, \theta)=\frac{\Gamma\left( n+\frac{1}{2}\right)}{\sqrt{\pi} \Gamma (n)} \frac{e^{-\frac{\theta^2}{2(1+\mu^2)t}}}{\sqrt{2\pi t}}\int_{-\infty}^{+\infty}\cos \left(\frac{y\theta}{(1+\mu^2)t}  \right)\frac{q^{n-\frac{1}{2},-\frac{1}{2}}_{t}(r(0), \arcosh (\cosh (r)\cosh (y)))}{\left((\cosh (r)\cosh (y))^2-1\right)^n}e^{\frac{y^2}{2(1+\mu^2)t} }dy.
\end{equation}
Therefore, the conditional law $\mathbb{P}_{\theta (t) \mid r(t)=r}$ of $\theta (t)$ given $r(t)=r$ is given by
    \[
    d\mathbb{P}_{\theta(t) \mid r(t)=r}(\theta)= p_t(r, \theta)\frac{(\sinh (r))^{2n-1}\cosh (r)}{q_t^{n-1,0}(r(0),r)}   d\theta,
    \]
    and for $k \in \mathbb Z$, $t>0$, $\beta = e^{i \theta}\left(\frac{w}{\sqrt{1-|w|^2}},\frac{1}{\sqrt{1-|w|^2}}\right) \in \mathbf{AdS}^{2n+1}$, $w\in \mathbb{C}H^n$, $\theta \in [0,2\pi)$, and $r:=\arctan(|w|)$ we have
\[
\mathbb{P}\left(  \theta(t) =\theta +2\pi k \mid \beta^\mu(t)= \beta\right)= \, \frac{p_t(r, \theta +2\pi k)}{\sum_{m \in \mathbb{Z}} p_{t}(r, \theta +2\pi m)}.
\]
\end{theorem}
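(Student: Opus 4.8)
The three assertions are linked, so I would first untangle them. Once the joint density $d\mathbb{P}_{r(t),\theta(t)}$ in \eqref{eq8} is established, the conditional law of $\theta(t)$ given $r(t)=r$ is obtained simply by dividing the joint density by the radial marginal, which is exactly $q_t^{n-1,0}(r(0),r)$ because $(r(t))_{t\ge 0}$ is the hyperbolic Jacobi diffusion with generator $\mathcal{L}^{n-1,0}$; and the Brownian bridge formula then follows from Yor's conditioning device (Lemma 6.1 in \cite{Yor1980LoiDL}) applied to the fibre variable, precisely as in the proof of Theorem \ref{law winding S2n+1}. Thus all the content sits in the joint density, and I would concentrate the work there.

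The starting point is the decomposition $\theta(t)=\mu B(t)+A(t)$, where $A(t)=\frac{i}{2}\sum_{j=1}^n\int_0^t\frac{w_j(s)\,d\overline{w}_j(s)-\overline{w}_j(s)\,dw_j(s)}{1+|w(s)|^2}$ is the stochastic area functional of the $\mathbb{C}H^n$ Brownian motion and $(B(t))_{t\ge 0}$ is a one-dimensional Brownian motion independent of $(w(t))_{t\ge 0}$, hence of the pair $(r(t),A(t))$. Because of this independence the conditional characteristic functions factor, $\mathbb{E}(e^{i\lambda\theta(t)}\mid r(t)=r)=e^{-\mu^2\lambda^2 t/2}\,\mathbb{E}(e^{i\lambda A(t)}\mid r(t)=r)$. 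I would then compute $\mathbb{E}(e^{i\lambda A(t)}\mid r(t)=r)$ by the Yor transform, introducing the exponential martingale built from the $\coth/\tanh$ drift of the radial diffusion and changing measure by Girsanov exactly as in Proposition \ref{prop:Fourier(t)ransform_CH} and Proposition \ref{cor:characteristic_function_winding_number S2n+1}, but with the $n$-dimensional area functional (following the method of \cite[Section 4.2]{Baudoin2022}). This yields a ratio of hyperbolic Jacobi heat kernels with parameters shifted by $|\lambda|$, times a power $(\tanh r(0)/\tanh r)^{|\lambda|}$ and an exponential prefactor, and Fourier inversion in $\lambda$ produces the conditional density.

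The heart of the argument is the Fourier inversion together with its reassembly into the stated form. Inserting the integral representation \eqref{jacobi hyperbolic heat kernel} of the hyperbolic Jacobi kernel and an Euler-type representation of the hypergeometric function $\Phi_p^{\alpha,\beta}$ (as already used in the $\mathbb{C}H^1$ density), one can integrate out $\lambda$ and collapse the $|\lambda|$-dependence. The structural fact organising the computation is that $\mathcal{L}^{n-1/2,-1/2}$ is the radial generator of Brownian motion on the real hyperbolic space $\mathbb{H}^{2n+1}$: the quantity $\arcosh(\cosh r\,\cosh y)$ is the lifted distance furnished by the hyperbolic law of cosines, and the shifted complex-hyperbolic kernels reassemble into $q_t^{n-1/2,-1/2}(r(0),\arcosh(\cosh r\,\cosh y))$. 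Finally, the explicit Gaussian factor in $\theta$ is produced by completing the square: the product $e^{-\theta^2/(2(1+\mu^2)t)}\cos(y\theta/((1+\mu^2)t))\,e^{y^2/(2(1+\mu^2)t)}$ equals the real part of $e^{-(\theta-iy)^2/(2(1+\mu^2)t)}$, which exhibits the fibre variable as a Gaussian of parameter $(1+\mu^2)t$ — the contribution $\mu^2 t$ of the independent part $\mu B$ plus the contribution $t$ of the area functional — analytically continued along the transverse displacement $y$. Collecting the constants through the duplication formula and \eqref{harish-chandra function} gives \eqref{eq8}.

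The main obstacle is this middle recognition step: carrying out the inversion and matching the $|\lambda|$-dependent ratio of complex-hyperbolic Jacobi kernels with the fixed real-hyperbolic kernel evaluated at $\arcosh(\cosh r\,\cosh y)$. This requires the correct special-function identity (a quadratic/descent transformation relating the $\mathbb{C}H^n$ and $\mathbb{H}^{2n+1}$ radial kernels) and careful bookkeeping of the Harish-Chandra density \eqref{harish-chandra function} and the Gaussian factors. Everything else — the reduction of the three claims to the joint density, the factorisation through the independent Gaussian, and the bridge conditioning — is routine in view of the earlier propositions of this section.
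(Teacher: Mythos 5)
Your architecture for the reductions matches the paper exactly: the conditional law is the joint density divided by the radial marginal $q_t^{n-1,0}(r(0),r)$, the bridge formula follows from Lemma 6.1 of \cite{Yor1980LoiDL} as in Theorem \ref{law winding S2n+1}, and the passage from $\mu=0$ to general $\mu$ is a Gaussian convolution in $\theta$ using the independence of $B$ from $(w(t))_{t\ge 0}$ (your completing-the-square remark, writing the integrand as the real part of $e^{-(\theta-iy)^2/(2(1+\mu^2)t)}$, is precisely why the convolution produces the stated form). Where you diverge is the core formula \eqref{eq8} at $\mu=0$: the paper does not derive it at all but simply cites Proposition 4.2.7 of \cite{Baudoin2022}, whereas you propose to reconstruct it via the Yor transform, Fourier inversion in $\lambda$, and a special-function identity reassembling the shifted Jacobi kernels into $q_t^{n-\frac12,-\frac12}$ evaluated at the lifted distance $\arcosh(\cosh r\cosh y)$. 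That is a legitimate and more self-contained route (it is essentially how the cited result is obtained), and it buys independence from the external reference at the cost of the hardest computation in the section.

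Two caveats on your sketch of that computation. First, you misstate the conditional characteristic function of the area functional: for the $n$-dimensional area on $\mathbb{C}H^n$ one gets $e^{|\lambda|(n-\frac12|\lambda|\mu^2)t}\bigl(\cosh r(0)/\cosh r\bigr)^{|\lambda|}\,q_t^{n-1,|\lambda|}(r(0),r)/q_t^{n-1,0}(r(0),r)$ (only the second Jacobi index is shifted; see Proposition \ref{cor:characteristic_function_winding_number_Sitter}), not the $\bigl(\tanh r(0)/\tanh r\bigr)^{|\lambda|}\,q_t^{|\lambda|,-|\lambda|}$ ratio, which belongs to the $\mathbb{C}H^1$ winding of Proposition \ref{prop:Fourier(t)ransform_CH}. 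Starting the inversion from the wrong transform would derail the reassembly. Second, the ``middle recognition step'' — matching the $\lambda$-family of $\mathbb{C}H^n$ kernels with the fixed real-hyperbolic kernel at $\arcosh(\cosh r\cosh y)$ — is where all the content lies, and you name it as an obstacle without supplying the identity; as written your argument is a plan for that step rather than a proof of it. Since the paper discharges exactly this point by citation, your proposal is acceptable as an alternative only once that identity is actually established.
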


\begin{proof}
When $\mu=0$, the formula \eqref{eq8} follows from Proposition 4.2.7 in \cite{Baudoin2022}. Since $(B(t))_{t \ge 0}$ is a Brownian motion independent from $(w(t))_{t \ge 0}$ the general formula follows by convolution in the variable $\theta$ with the density of the Gaussian random variable with mean zero and variance $\mu t$. The remainder of the statement follows by arguments similar to the ones in the proof of Theorem \ref{law winding S2n+1}. We  omit the details for the sake of conciseness.
\end{proof}

\begin{proposition} \label{cor:characteristic_function_winding_number_Sitter}
For $\lambda \in \mathbb{R}$, $r\geq 0$ and $t>0$
\begin{align}\label{cond charac anti}
    \mathbb{E} (e^{i\lambda \theta(t)}\mid r(t)=r) =e^{|\lambda| (n-\frac{1}{2}|\lambda|\mu^2 )t}\left(\frac{\cosh (r(0))}{\cosh (r)}\right)^{|\lambda| } \frac{q_t^{n-1,|\lambda| } (r(0),r)}{q_t^{n-1,0} (r(0),r)} .
\end{align}
Therefore for every $\lambda \in \mathbb{R}$ and $\beta = e^{i \theta}\left(\frac{w}{\sqrt{1-|w|^2}},\frac{1}{\sqrt{1-|w|^2}}\right) \in \mathbf{AdS}^{2n+1}$, one has
\[
  \mathbb{E} (e^{i\lambda \theta(t)}\mid \beta^\mu(t)=\beta) =\frac{\sum_{k \in \mathbb Z} e^{-ik\theta} e^{|\lambda+k| (n-\frac{1}{2}|\lambda+k|\mu^2 )t}\left(\frac{\cosh (r(0))}{\cosh (r)}\right)^{|\lambda+k| }q_t^{n-1,|\lambda+k|} (r(0),r)}{\sum_{k \in \mathbb Z} e^{-ik\theta} e^{|k| (n-\frac{1}{2}|k|\mu^2 )t}\left(\frac{\cosh (r(0))}{\cosh (r)}\right)^{|k|}q_t^{n-1,|k |} (r(0),r)}
\]
and
\[
\lim_{t \to +\infty} \mathbb{E} \left(e^{i\lambda \frac{\theta(t)}{\sqrt{t}}}\mid \beta^\mu(t)=\beta\right) =e^{-\frac{1}{2}\lambda ^2(\mu^2+1) )t} .
\]
Consequently, conditional on $\beta^\mu(t)=\beta$, $\frac{\theta(t)}{\sqrt{t}}$ converges in distribution to a Gaussian random variable $\mathcal{N}(0,1+\mu^2)$, when $t \to +\infty$.
\end{proposition}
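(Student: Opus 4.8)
The plan is to treat the three assertions in turn, reusing the machinery already set up for the sphere and for $\mathbb{C}H^1$. For the conditional characteristic function \eqref{cond charac anti} I would split $\theta(t)=\mu B(t)+A(t)$, where $A(t):=\frac{i}{2}\sum_{j=1}^n\int_0^t\frac{w_j(s)\,d\overline w_j(s)-\overline w_j(s)\,dw_j(s)}{1+|w(s)|^2}$ is the stochastic area of the $\mathbb{C}H^n$–motion and $B$ the independent one–dimensional Brownian motion. Since $B$ is independent of $(w(t))$, and hence of $r(t)$, conditioning on $r(t)=r$ factorises and the $\mu B(t)$ part contributes the Gaussian factor $e^{-\frac12\mu^2\lambda^2 t}$. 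The conditional characteristic function of $A(t)$ is the hyperbolic mirror of the sphere computation in Proposition \ref{cor:characteristic_function_winding_number S2n+1}: running the Yor–transform/Girsanov argument of Proposition \ref{prop:Fourier(t)ransform_CH} in $\mathbb{C}H^n$ reweights the radial hyperbolic Jacobi diffusion from $\mathcal L^{n-1,0}$ to $\mathcal L^{n-1,|\lambda|}$ and yields $e^{|\lambda|nt}(\cosh r(0)/\cosh r)^{|\lambda|}\,q_t^{n-1,|\lambda|}(r(0),r)/q_t^{n-1,0}(r(0),r)$, the positive exponent reflecting the growth of hyperbolic area. Multiplying the two factors and writing $|\lambda|n-\frac12\lambda^2\mu^2=|\lambda|(n-\frac12|\lambda|\mu^2)$ gives \eqref{cond charac anti}.

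For the bridge characteristic function I would follow verbatim the argument of Corollary \ref{characteristic function bridge S2n+1}. Theorem \ref{joint ant de sitter} identifies the conditional law of $\theta(t)$ given the full endpoint $\beta^\mu(t)=\beta$ as the discrete law $p_t(r,\theta+2\pi k)/\sum_m p_t(r,\theta+2\pi m)$, so that $\mathbb E(e^{i\lambda\theta(t)}\mid\beta^\mu(t)=\beta)=\sum_k e^{i\lambda(\theta+2\pi k)}p_t(r,\theta+2\pi k)/\sum_m p_t(r,\theta+2\pi m)$. Applying the Poisson summation formula to the $\theta$–periodisation of $p_t(r,\cdot)$ — whose $1/\theta^2$–type decay, needed to justify the summation, follows from the explicit form \eqref{eq8} exactly as \eqref{CH1: fr} and \eqref{S2n+1: fr} did in the earlier cases — rewrites both numerator and denominator as sums over $k\in\mathbb Z$ of the conditional-on-$r(t)=r$ characteristic function evaluated at the shifted frequency $\lambda+k$. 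Substituting the closed form \eqref{cond charac anti} at frequency $\lambda+k$ then produces precisely the stated ratio.

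The asymptotic statement is the substance of the result. I would put $\lambda\mapsto\lambda/\sqrt t$ in the bridge formula and analyse each summand as $t\to+\infty$. Combining the exponential prefactor $e^{a(n-\frac12 a\mu^2)t}$ of the $k$–term (with $a:=|\tfrac{\lambda}{\sqrt t}+k|$) with the decay $q_t^{n-1,a}\sim C\,t^{-3/2}e^{-\frac12(n+a)^2t}$ from Lemma \ref{lemma:asymptotics_general_hyperbolic_heat_kernel}, the net exponential rate of the $k$–term is $-\frac12\big(n^2+a^2(1+\mu^2)\big)t$. For $k\neq0$ one has $a\to|k|\ge1$, so these terms are exponentially smaller than the $k=0$ term, for which $a=|\lambda|/\sqrt t\to0$; hence both numerator and denominator are dominated by their $k=0$ summands. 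Expanding $(n+|\lambda|/\sqrt t)^2=n^2+2n|\lambda|/\sqrt t+\lambda^2/t$ shows that the $e^{n|\lambda|\sqrt t}$ growth of the prefactor is cancelled exactly by the $e^{-n|\lambda|\sqrt t}$ coming from the kernel, that the $\lambda^2/t$ term contributes $-\frac12\lambda^2$, and that the prefactor's $-\frac12 a^2\mu^2 t$ contributes $-\frac12\lambda^2\mu^2$; meanwhile the factors $(\cosh r(0)/\cosh r)^{|\lambda|/\sqrt t}$, the spherical functions $\Phi_0^{n-1,a}$, the $\Gamma$–quotient and the $t^{-3/2}$ all converge to their $a=0$ values and cancel between numerator and denominator. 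The limit is therefore $e^{-\frac12\lambda^2(1+\mu^2)}$, the characteristic function of $\mathcal N(0,1+\mu^2)$, giving the claimed convergence in distribution.

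The main obstacle is this last step: Lemma \ref{lemma:asymptotics_general_hyperbolic_heat_kernel} is stated for a \emph{fixed} second index, whereas here the index $a=|\lambda|/\sqrt t$ drifts to $0$ with $t$, so one cannot simply quote it. I would therefore redo its Laplace-type argument keeping track of the $t$–dependence in $a$, verifying that the dominated-convergence bounds (6.3)–(6.4) of \cite{Koornwinder1984} hold uniformly for $a$ in a neighbourhood of $0$, and making sure the $\sqrt t$–exponential growth of the prefactor is cancelled \emph{before} passing to the limit. The finite factor $e^{-\frac12\lambda^2(1+\mu^2)}$ arises entirely from the second–order term in the expansion of $(n+a)^2$ together with the $\mu$–term, so the bookkeeping of these competing $O(\sqrt t)$ and $O(1)$ contributions is the delicate part.
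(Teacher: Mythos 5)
Your proposal is correct and follows essentially the same route as the paper: the paper obtains \eqref{cond charac anti} by citing the conditional characteristic function of the hyperbolic stochastic area (Theorem 4.2.13 of \cite{Baudoin2022}) together with the independence of $B$, and then invokes the Poisson-summation argument of Corollary \ref{characteristic function bridge S2n+1} and Lemma \ref{lemma:asymptotics_general_hyperbolic_heat_kernel} for the bridge formula and the Gaussian limit. The only differences are that you rederive the area formula via the Girsanov/Yor transform instead of quoting it, and that you explicitly flag (and propose to repair) the need for uniformity in the second Jacobi index when applying Lemma \ref{lemma:asymptotics_general_hyperbolic_heat_kernel} with the drifting index $a=|\lambda|/\sqrt{t}\to 0$ --- a point the paper's one-line proof passes over.
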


\begin{proof}
Formula \eqref{cond charac anti} follows from Theorem 4.2.13. in \cite{Baudoin2022} and the fact that $(B(t))_{t \ge 0}$ is a Brownian motion independent from $(w(t))_{t \ge 0}$.  Using similar arguments as in the proof of Corollary \ref{characteristic function bridge S2n+1} and then Lemma \ref{lemma:asymptotics_general_hyperbolic_heat_kernel} we obtain the stated results.
\end{proof}

\subsection{The index of the Brownian loop on  \texorpdfstring{$\mathbf{SL} (2,\mathbb{R})$}{SL(2, R)}}

To conclude the paper, we discuss and highlight separately the case $n=1$ in the anti-de Sitter fibration setting.  In that case the 3-dimensional  anti-de Sitter space $\mathbf{AdS}^3$  is the quadric defined by
\[
\mathbf{AdS}^3=\lbrace z=(z_1,z_{2})\in \mathbb{C}^{2} | \, | z_1 |^2-|z_2|^2 =-1\rbrace \subset \mathbb{C}^{2} \simeq \mathbb{R}^4 .
\]
It is the ball of radius $-1$  for the Lorentz metric on $\mathbb{R}^4$ with real signature $(2,2)$. Consequently, by restriction, $\mathbf{AdS}^3$ is naturally  equipped with a Lorentz metric with signature $(2,1)$. 

Let us now consider  the real special linear group 
\[
\mathbf{SL}(2,\R)= \left\{ \left(\begin{array}{ll}
a & b \\
c & d
\end{array}\right) \mid a,b,c,d \in \R, \, ad-bc=1 \right\}.
\]

Its Lie algebra $\mathfrak{sl} (2)$ consists of $2 \times 2$ matrices of trace $0$.
A basis of $\mathfrak{sl} (2)$ is formed by the matrices:
 \[
X=\left(
\begin{array}{cc}
~1~ & ~0~ \\
~0~ & -1~
\end{array}
\right)
,\text{ }Y=\left(
\begin{array}{cc}
~0~ & ~1~ \\
~1~ & ~0~
\end{array}
\right)\textrm{, and }
\text{ }Z=\left(
\begin{array}{cc}
~0~ & ~1~ \\
-1~& ~0~
\end{array}
\right)
\]
for which the following commutation relations hold
\begin{align}\label{eq-Liestructure}
[X,Y]=2Z, \quad [X,Z]=2Y, \quad [Y,Z]=-2X.
\end{align}

We denote by $\mathbb X,\mathbb Y,\mathbb Z$ the corresponding left invariant vector fields on 
$\mathbf{SL} (2,\mathbb{R})$ and we consider the left invariant Lorentz metric $h$ on $\mathbf{SL} (2,\mathbb{R})$, with real signature $(2,1)$, that makes $\mathbb X,\mathbb Y,\mathbb Z$ an orthonormal frame, i.e.
\[
h(\mathbb X,\mathbb Y)=h(\mathbb Y,\mathbb Z)=h(\mathbb Z,\mathbb X)=0
\]
and
\[
h(\mathbb X,\mathbb X)=h(\mathbb Y,\mathbb Y)=-h(\mathbb Z,\mathbb Z)=1.
\]
It is easy to check that this Lorentz metric is bi-invariant and  a constant multiple of the Killing form of $\mathbf{SL} (2,\mathbb{R})$. Consider then the mapping $\Psi: \mathbf{AdS}^3 \to \mathbf{SL}(2,\R)$ defined by

\begin{align*}
\Psi (x_1+iy_1,x_2+iy_2) :=\left(\begin{array}{ll}
x_1+x_2 & y_1+y_2 \\
y_1-y_2 & x_2-x_1
\end{array}\right) .
\end{align*}

With $w=\tanh (r) e^{i\phi}$ one has
\begin{align*}
\Psi \left( \frac{w e^{i \theta}}{\sqrt{1-|w|^2}} , \frac{ e^{i \theta}}{\sqrt{1-|w|^2}}  \right) &= \Psi \left( \sinh (r) e^{i(\phi+\theta)} ,\cosh(r) e^{i \theta}  \right) \\
 &=\left(
\begin{array}{cc}
 \cosh(r) \cos (\theta) + \sinh(r)\cos (\phi +\theta) & \cosh(r) \sin (\theta) + \sinh(r)\sin (\phi +\theta)\\
-\cosh(r) \sin (\theta) + \sinh(r)\sin (\phi +\theta)& \cosh(r) \cos (\theta) - \sinh(r)\cos (\phi +\theta)
\end{array}\right) \\
 &=\exp \left(r \cos (\phi )X +r \sin (\phi )Y \right) \exp ( \theta Z),
\end{align*}
which yields that $\Psi$ is an isometry between the Lorentz metrics of $\mathbf{AdS}^3$ and $\mathbf{SL} (2,\mathbb{R})$ after comparing \cite[Theorem 4.2.5]{Baudoin2022} to \cite[Section 2]{Bonnefont2}.
Moreover, this isometry satisfies
\[
\Psi (e^{i\theta} z_1, e^{i\theta} z_2)= \Psi (z_1,  z_2) \exp(\theta Z)
\]
and makes the following diagram commutative
\[
 \begin{tikzcd}
 \mathbf{AdS}^3 \arrow[r, "\varphi"] \arrow[d, "\Psi"'] & \mathbb{C}H^{1}\arrow[d, "g^{-1}"]  \\
 \mathbf{SL} (2,\mathbb{R}) \arrow[r,"f"'] & H^2  
 \end{tikzcd}
\]
where $\varphi :(z_1,z_2) \to \frac{z_1}{z_2}$, $f$ is the indefinite Riemannian  submersion $\left(\begin{array}{ll}
a & b \\
c & d
\end{array}\right) \to \frac{ai+b}{ci+d}$ and $g:H^2 \to \mathbb{C}H^{1}$ is the Cayley transform.

It follows from these observations that the Berger Laplacian  on $ \mathbf{SL} (2,\mathbb{R})$ is given by
\begin{align*}
\Delta^\mu =\mathbb X^2+\mathbb Y^2+\mu^2 \mathbb Z^2.
\end{align*}
 In particular, the generator of Brownian motion on $ \mathbf{SL} (2,\mathbb{R})$ is $\frac{1}{2} \Delta^1$.
 We then notice that the set $\mathbf{SL} (2,\mathbb{R})$ is homeomorphic to $\mathbb{R}^2 \times \mathbb{S}^1$. This allows to define the index of a loop $\gamma$ as 
 \[
 \mathbf{Ind}(\gamma)=\int_\gamma \omega \in \mathbb{Z},
 \]
where $\omega$ is a generator of the integral de Rham cohomology $H^{dR}(\mathbf{SL} (2,\mathbb{R}),\mathbb{Z})\simeq \mathbb{Z}$. We have the following theorem:

\begin{theorem}\label{SL2 case}
Let $\mu \ge 0$. Let $(X^\mu(s))_{0 \le s \le t}$ be a Berger Brownian loop of length $t$, i.e. a diffusion process with generator $\frac{1}{2}\Delta^\mu$ started from the identity and conditioned to come back to the identity at time $t$.
Then, for every $k \in \mathbb Z$,
\[
\mathbb{P}\left(  \mathbf{Ind}(X^\mu[0,t]) =k \right) =C_\mu(t) \, e^{-\frac{4\pi^2k^2}{2(1+\mu^2)t}}  \int_{-\infty}^{+\infty}\cos \left(\frac{2\pi k y}{(1+\mu^2)t}  \right) \frac{y}{\sinh(y)}e^{-\frac{\mu^2y^2}{2(1+\mu^2)t} }dy,
\]
where $C_\mu(t)>0$ is the normalization constant.
In particular, for $\mu=0$,
\[
\mathbb{P}\left(  \mathbf{Ind}(X^0[0,t]) =k \right) =C_0 (t) \,  \frac{e^{-\frac{2\pi^2k^2}{t}}}{1+\cosh \left(\frac{2\pi^2k}{t}\right)}.
\]
\end{theorem}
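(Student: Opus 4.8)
The plan is to reduce the statement to the joint-law formula of Theorem~\ref{joint ant de sitter} specialised to $n=1$, and then to carry out a single explicit heat-kernel computation. First I would transport everything to $\mathbf{AdS}^3$ via the isometry $\Psi$ and the commutative diagram above: under $\Psi$ the identity of $\mathbf{SL}(2,\mathbb R)$ corresponds to the point with coordinates $(r,\theta)=(0,0)$, and since $\omega=\frac{1}{2\pi}d\theta$ one has $\mathbf{Ind}(X^\mu[0,t])=\frac{1}{2\pi}\theta(t)$. Conditioning the loop to return to the identity therefore amounts to conditioning on $r(t)=0$ together with $\theta(t)\in 2\pi\mathbb Z$ (recall $r(0)=0$ as well), so that $\{\mathbf{Ind}=k\}$ is exactly the event $\{\theta(t)=2\pi k\}$. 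Applying the last display of Theorem~\ref{joint ant de sitter} at $r=0$, $\theta=0$ (the factors $(\sinh r)^{2n-1}\cosh r$ cancelling in the ratio) gives
\[
\mathbb P\big(\mathbf{Ind}(X^\mu[0,t])=k\big)=\frac{p_t(0,2\pi k)}{\sum_{m\in\mathbb Z}p_t(0,2\pi m)} .
\]

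Next I would specialise \eqref{eq8} to $n=1$ and $r(0)=r=0$. The constant reduces to $\Gamma(3/2)/(\sqrt\pi\,\Gamma(1))=\tfrac12$, while at $r=0$ one has $\arcosh(\cosh r\cosh y)=|y|$ and $(\cosh r\cosh y)^2-1=\sinh^2 y$. Hence
\[
p_t(0,\theta)=\tfrac12\,\frac{e^{-\frac{\theta^2}{2(1+\mu^2)t}}}{\sqrt{2\pi t}}\int_{-\infty}^{+\infty}\cos\!\Big(\tfrac{y\theta}{(1+\mu^2)t}\Big)\,\frac{q^{1/2,-1/2}_t(0,|y|)}{\sinh^2 y}\,e^{\frac{y^2}{2(1+\mu^2)t}}\,dy,
\]
so everything comes down to an explicit formula for the hyperbolic Jacobi kernel $q^{1/2,-1/2}_t(0,\cdot)$.

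The heart of the argument is this kernel computation, and it is the step I expect to require the most care. The point is that $\mathcal L^{1/2,-1/2}=\tfrac12\partial_r^2+\coth(r)\,\partial_r$ is (half of) the radial part of the Laplacian on $\mathbb H^3$; correspondingly the spherical function is the elementary $\Phi_p^{1/2,-1/2}(r)=\frac{\sin(pr)}{p\sinh r}$, and the Plancherel density \eqref{harish-chandra function} collapses, after the reflection and duplication identities for $\Gamma$, to a constant multiple of $p^2$. Inserting these into \eqref{jacobi hyperbolic heat kernel} and evaluating the resulting Gaussian integral $\int_0^\infty p\sin(pr)\,e^{-p^2 t/2}\,dp$ yields
\[
q^{1/2,-1/2}_t(0,r)=c(t)\,r\,\sinh(r)\,e^{-r^2/(2t)},
\]
with $c(t)>0$ independent of $r$. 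Substituting this back, the factor $q^{1/2,-1/2}_t(0,|y|)/\sinh^2 y$ becomes $c(t)\,\frac{y}{\sinh y}\,e^{-y^2/(2t)}$, and combining the two Gaussian exponents gives $-\frac{y^2}{2t}+\frac{y^2}{2(1+\mu^2)t}=-\frac{\mu^2 y^2}{2(1+\mu^2)t}$. All $k$-independent prefactors cancel in the ratio above and are absorbed into the normalisation $C_\mu(t)$ (fixed by $\sum_k\mathbb P(\mathbf{Ind}=k)=1$), producing the stated general formula since $(2\pi k)^2=4\pi^2k^2$.

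Finally, for $\mu=0$ the inner integral is a classical one: with $a=\tfrac{2\pi k}{t}$,
\[
\int_{-\infty}^{+\infty}\cos(ay)\,\frac{y}{\sinh y}\,dy=\frac{\pi^2}{2}\,\mathrm{sech}^2\!\Big(\tfrac{\pi a}{2}\Big)=\frac{\pi^2}{1+\cosh(\pi a)},
\]
which one obtains, for instance, by expanding $1/\sinh y$ as a geometric series and differentiating the known sum $\sum_{n\ge0}\big((2n+1)^2+a^2\big)^{-1}=\frac{\pi}{4a}\tanh(\tfrac{\pi a}{2})$. Since $\pi a=2\pi^2k/t$ and the $\mu=0$ prefactor is $e^{-2\pi^2k^2/t}$, absorbing the constant $\pi^2$ into $C_0(t)$ yields the claimed closed form. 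The only genuinely delicate point is tracking the normalisation through the $\mathbb H^3$ kernel identity; the remainder is bookkeeping.
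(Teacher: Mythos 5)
Your proposal is correct and follows essentially the same route as the paper: reduce to the last display of Theorem \ref{joint ant de sitter} at $r=r(0)=0$, recognize the Jacobi kernel in \eqref{eq8} for $n=1$ as the radial heat kernel of $\mathbb{H}^3$ so that $q_t(0,|y|)/\sinh^2 y = c(t)\,\frac{y}{\sinh y}e^{-y^2/(2t)}$, combine the Gaussian exponents, and evaluate the classical integral $\int_{\mathbb{R}}\cos(ay)\frac{y}{\sinh y}\,dy=\frac{\pi^2}{1+\cosh(\pi a)}$ (which you do by series expansion where the paper uses residues — both are fine). Your labelling $q^{1/2,-1/2}_t$ is the one consistent with \eqref{eq8} at $n=1$ (the paper's proof writes $q^{-1/2,-1/2}_t$, apparently a typo), and deriving its closed form from \eqref{jacobi hyperbolic heat kernel} via $\Phi_p^{1/2,-1/2}(r)=\frac{\sin(pr)}{p\sinh r}$ rather than citing it is a harmless variation.
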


\begin{proof}
From Theorem \ref{joint ant de sitter} we have
\[
\mathbb{P}\left(  \mathbf{Ind}(X^\mu) =k \right)= \, \frac{p_t(0,2\pi k)}{\sum_{m \in \mathbb{Z}} p_{t}(0, 2\pi m)}
\]
where
\begin{align*}
p_t(0, \theta)&=\frac{\Gamma\left( 1+\frac{1}{2}\right)}{\sqrt{\pi} \Gamma (1)} \frac{e^{-\frac{\theta^2}{2(1+\mu^2)t}}}{\sqrt{2\pi t}}\int_{-\infty}^{+\infty}\cos \left(\frac{y\theta}{(1+\mu^2)t}  \right)\frac{q^{-\frac{1}{2},-\frac{1}{2}}_{t}(0, y)}{(\cosh (y))^2-1}e^{\frac{y^2}{2(1+\mu^2)t} }dy.
\end{align*}
Since $q^{-\frac{1}{2},-\frac{1}{2}}_{t}$ is associated to the heat on the 3-dimensional real hyperbolic space, it is well-known (see for example \cite[Section B.3]{Baudoin2022})  that
\[
q^{-\frac{1}{2},-\frac{1}{2}}_{t}(0, y)=\sqrt{\frac{2}{\pi}}e^{-\frac{t}{2}} \frac{y}{t} e^{-\frac{y^2}{2t}}\sinh(y).
\]
Therefore we have
\[
p_t(0, \theta)=\frac{e^{-\frac{t}{2}}}{2\pi t^{3/2}} e^{-\frac{\theta^2}{2(1+\mu^2)t}} \int_{-\infty}^{+\infty}\cos \left(\frac{y\theta}{(1+\mu^2)t}  \right) \frac{y}{\sinh(y)}e^{-\frac{\mu^2y^2}{2(1+\mu^2)t} }dy
\]
and the conclusion follows.
When $\mu=0$, using residue calculus and the contour
\[
\mathcal{C}_R=\left\{ z \in \mathbb{C} \mid  \mathbf{Im} (z) \ge 0, |z|=R \right\} \cup \left\{ z \in \mathbb{C} \mid  \mathbf{Im} (z) = 0, -R \le \mathbf{Re} (z) \le R \right\}
\]
 when $R \to +\infty$, one can compute

\[
\int_{-\infty}^{+\infty}\cos \left(\frac{y\theta}{t}  \right) \frac{y}{\sinh(y)}dy=\frac{\pi^2}{1+\cosh \left(\pi \frac{\theta}{t} \right)},
\]
from which we deduce the stated result.
\end{proof}

\bibliographystyle{plain}
\bibliography{reference.bib}

\end{document}